\newtheorem{remark}{Remark}[section]
\newtheorem{lemma}{Lemma}[section]
\newtheorem{proposition}{Proposition}[section]
\newtheorem{theorem}{Theorem}[section]
\numberwithin{equation}{section}
\newcommand{\ep}{\varepsilon}
\newcommand{\la}{\lambda}
\newcommand{\Om}{\varOmega}
\newcommand{\D}{\varDelta}
\newcommand{\E}{\mathcal{E}}
\newcommand{\ii}{\mathrm{i}}
\newcommand{\pa}{\bar{\partial}}
\newcommand{\pt}{\tilde{\partial}}
\newcommand{\Hf}{\mathcal{H}}
\newcommand{\p}{\partial}
\newcommand{\tha}{t_{n+\frac12}}
\DeclareMathOperator\Rea{\mathrm{Re}}
\newcommand{\V}{\mathbb{V}}
\newcommand{\h}{\eta_{2,\V^n}}
\newcommand{\hin}{\eta_{\infty,\V^n}}
\newcommand{\hhin}{\eta_{\infty,\V^{n+1}}}
\newcommand{\hh}{\eta_{2,\V^{n+1}}}
\newcommand{\het}{\eta_{2,\widehat\V^{n+1}}}
\newcommand{\R}{\mathcal{R}}
\newcommand{\M}{\mathcal{M}}
\newcommand{\N}{\mathcal{N}}
\newcommand{\Nb}{\mathbb{N}}
\newcommand{\I}{\mathcal{I}}
\newcommand{\II}{\mathrm{I}}
\newcommand{\T}{\mathcal{T}}
\newcommand{\Pro}{\mathcal{P}}
\newcommand{\TT}{\mathrm{T}}
\newcommand{\Ss}{\mathrm{S}}
\newcommand{\Les}{ \mathscr{L}}
\begin{document}
%
%
%
%
\title[A posteriori error analysis for evolution NLS]
{
A posteriori error analysis for evolution nonlinear Schr\"odinger equations up to the critical exponent}
\date{\today}
\author{Theodoros Katsaounis}
\address[Theodoros Katsaounis]{King Abdullah University of Science and Technology(KAUST)\\
Thuwal, Kingdom of Saudi Arabia \\
 \& IACM--FORTH, Heraklion, GREECE \\
 \& Dept. of Math. \& Applied Mathematics, Univ. of Crete, Greece
}
\email{theodoros.katsaounis@kaust.edu.sa}
\author{Irene Kyza}
\address[Irene Kyza]{Division of Mathematics\\
University of Dundee\\
Dundee DD1 4HN\\
Scotland, UK} \email{ikyza@maths.dundee.ac.uk}

\thanks{Work partially supported by Excellence Award 1456 of the Greek Ministry of Research and Education.}
\keywords{Evolution NLS, power nonlinearities,  a posteriori error control, reconstruction technique, relaxation Crank-Nicolson-type scheme, finite elements}
\begin{abstract}
We provide a posteriori error estimates  in the $L^\infty(L^2)-$norm for relaxation  time discrete and fully discrete schemes for a class of evolution nonlinear Schr\"odinger equations up to the critical exponent. In particular for the discretization in time we use the relaxation Crank-Nicolson-type scheme introduced by Besse in \cite{Besse}. For the discretization in space we use finite element spaces that are allowed to change between time steps. The estimates are obtained using the reconstruction technique. Through this technique the problem is converted to a perturbation of the original partial differential equation and this makes it possible to use nonlinear stability arguments
as in the continuous problem. In particular, main ingredients we use in our analysis are the Gagliardo-Nirenberg inequality and the two conservation laws (mass and energy conservation) of the continuous problem. Numerical results illustrate that the estimates are indeed of optimal order of convergence. 
\end{abstract}
\maketitle
%
%
\section{Introduction}
%
In this paper we provide error control for a class of evolution nonlinear Schr\"odinger (NLS) equations, up to the critical exponent through \emph{rigorous a posteriori error analysis}. \emph{To the best of our knowledge, this is the first time that rigorous a posteriori error estimates are proven for evolution NLS equations.}
More specifically, we consider the initial and boundary value problem 
\begin{equation}
\label{NLS} \left \{
\begin{aligned}
&\p_t u-\mathrm{i}\alpha\varDelta u=\ii\la f(u)    &&\quad\mbox{in ${\varOmega}\times (0,T)$,}& \\
&u=0  &&\quad\mbox{on $\partial \varOmega\times (0,T]$,}&  \\
&u(\cdot,0)=u_0 &&\quad\mbox{in ${\varOmega}$},&
\end{aligned}
\right.
\end{equation}
where $\Om\subset\mathbb{R}^d$, $d=1,2,$ is a bounded, convex, polygonal domain for $d=2$ and a finite interval for $d=1,$ $T<\infty$, $\alpha>0$, $\la\in\mathbb{R},$ and where $f(u)$ denotes the nonlinear term. In particular, the nonlinear term has the form 
$$f(z):=|z|^{2p}z,\quad \frac12\le p\le p^*\ \text{ with }\ p^*:=\frac2d;$$ 
$p^*=\frac2d$ is called  the critical exponent in spatial dimension $d$. For $u_0\in H_0^1(\Om)\cap H^2(\Om)$ and $\frac12\le p< p^*$ it is well known that  \eqref{NLS}    admits a unique solution $u\in C\left([0,T];  H_0^1(\Om)\cap H^2(\Om)\right)\cap C^1\left([0,T]; L^2(\Om)\right)$,   cf., e.g., \cite{Anton, Bourgain, BG}.    In contrast, for the critical exponent $p^*=\frac2d$, global existence of a solution $u$ of \eqref{NLS} is guaranteed, provided that a smallness assumption for the initial condition $u_0$ holds; cf. equation \eqref{initialgrowth}.

Models of the form \eqref{NLS} are widely used in many areas of applied sciences. For example they appear in nonlinear optics and lasers \cite{HK}, water waves \cite{Dysthe}, quantum hydrodynamics \cite{JMR} and Bose-Einstein condensates \cite{Strecker}. More applications are discussed in \cite{Sulem}.

There is still a large activity on NLS equations in the area of partial differential equations (PDEs) and analysis community, cf. e.g., \cite{Anton, Bourgain, Cazenave, MR1, MR2, MR3, OT, Tao} and the references therein. 
Moreover this activity includes qualitative and asymptotic questions, cf. e.g., \cite{BT, Carles, LMill,TVZ} and the references therein. A particular example is the semiclassical behavior of NLS equations, i.e. the regime where $0<\alpha\ll 1,$ $\lambda\sim \frac1\alpha$. 

Those are the main reasons that \eqref{NLS} has attracted the interest of the numerical analysis community as well. Several papers exist in the literature dealing with  discretization methods for \eqref{NLS} and their stability and convergence properties through a priori error analysis; here we mention a few: \cite{ADK,AAHM,BC,BJM,Besse,DFP,KM1,KM2,Tha,Wang, Zouraris}. Popular methods for the discretization in time of \eqref{NLS} are Crank-Nicolson-type and time-splitting-type methods, while for the spatial discretization spectral or finite element methods are usually used.

However, there is a very limited literature on the a posteriori error control for the evolution Schr\"odinger equations. For linear evolution Schr\"odinger equations a posteriori error estimates for Crank-Nicolson finite element schemes can be found in \cite{D,KK,Kyza}, whilst for time-splitting spectral methods can be found in \cite{KMP}. To the best of our knowledge \emph{no a posteriori error bounds exist in the literature for evolution NLS equations}. Nevertheless, developing such estimates is important. Not only they will provide mathematical guarantees on how accurate the numerical approximation is, but they will also highlight qualitative characteristics of the exact solution of \eqref{NLS} not known before, via rigorous error control. Additionally, the a posteriori error bounds can lead to the development of an adaptive algorithm that will significantly reduce the computational cost. That was the case for example in \cite{KK} with the a posteriori estimator of linear evolution Schr\"odinger equations.  Adaptive algorithms based on \emph{ad hoc} mesh selection criteria exist in the literature for various cases of NLS equations \eqref{NLS}, cf. \cite{ADKM,JLMPV, P,TA,ZZZ}. Usually the criteria used for the construction of adaptive algorithms in these cases are based on structural properties known for the exact solution.

\emph{Our main contribution in this paper is the rigorous proof of optimal order a posteriori error bounds in the $L^\infty(L^2)-$norm for the NLS equation \eqref{NLS} up to the critical exponent}, when it is discretized by a relaxation Crank-Nicolson finite element scheme. With the term \textit{optimal order} we mean that the a posteriori  estimator reduces with the same order  as the exact error. The relaxation Crank-Nicolson scheme we use for the  discretization in time is a generalization to variable time steps of the relaxation scheme introduced earlier by Besse in \cite{Besse} for the time discretization of \eqref{NLS} for constant time steps. The reason we also use the relaxation scheme,  rather than the standard Crank-Nicolson scheme, is because that way the nonlinear term is computed explicitly. Thus we avoid solving a nonlinear equation, that would have added an error, difficult to handle a posteriori. Moreover, the relaxation scheme exhibits mass conservation, same as the standard Crank-Nicolson scheme, thus reflecting the mass conservation  of the continuous problem \eqref{NLS}, cf. \eqref{cl1} below.

The a posteriori error estimates will be obtained using the reconstruction technique proposed by Akrivis, Makridakis \& Nochetto, \cite{AMN1,MN1}. Through this technique we will be able to obtain an error equation of a similar form to the NLS equation in \eqref{NLS}. The derivation of the estimates is then based on energy techniques and on nonlinear stability arguments for \eqref{NLS}. Actually the PDE satisfied by the error is harder than the NLS equation itself. For this reason the handling of the nonlinearity is delicate and technical. 

We next mention some of the main tools  used in the subsequent analysis. For this, we need to introduce some notation. We denote by $\|\cdot\|$ the $L^2-$norm in $\Om$, while for $1\le q\le\infty$, $q\ne 2$, we denote by $\|\cdot\|_{L^q}$ the $L^q-$norm in $\Om$. We equip $H_0^1(\Om)$ with the norm $\|\nabla v\|$ and we denote by $H^{-1}(\Om)$ the dual of $H_0^1(\Om)$ under that norm; we denote by $\|\cdot\|_{H^{-1}}$ the norm in $H^{-1}(\Om)$.
By $\langle\cdot,\cdot\rangle$ we indicate both the $L^2-$inner product, or the $H^{-1}-H_0^1$ duality pairing in $\Om$, depending on the context. 
In what follows,  global constants or functionals depending on the initial condition $u_0$ that are introduced in the paper may also depend on the dimension $d$, the exponent $p$ and the parameters $\alpha$ and $\la$. For simplicity, where there is no confusion, we avoid writing those dependences on the definitions, but we mention them precisely each time we define such a quantity (e.g., \eqref{GN} and \eqref{initialgrowth} below).

A key role in the derivation of the a posteriori estimates of the paper will be played by the Gagliardo-Nirenberg inequality, \cite{Brezis}
\begin{equation}
\label{GN}
\|v\|_{L^{2p+2}}\le\beta\|\nabla v\|^{\zeta}\|v\|^{1-\zeta},\quad \forall v\in H_0^1(\Om),
\end{equation}
where $\zeta:=\displaystyle\frac{pd}{2(p+1)}$ and $\beta$ is an absolute constant depending on $\Om$, $d$ and $p$. For the cases of critical exponent  $p=p^*(=\frac2d)$, \eqref{NLS} admits a unique solution $u\in C\left([0,T];  H_0^1(\Om)\cap H^2(\Om)\right)\cap C^1\left([0,T]; L^2(\Om)\right)$, if $u_0\in H_0^1(\Om)\cap H^2(\Om)$  and
\begin{equation}
\label{initialgrowth}
\Gamma(u_0):=\frac{\beta^{2p+2}\la}{\alpha(p+1)}\|u_0\|^{2p}<1.
\end{equation}
 For $\lambda \le0$, inequality \eqref{initialgrowth} is automatically satisfied;  problem \eqref{NLS} is then known as the {defocusing NLS equation}. If $\lambda>0$,  problem \eqref{NLS} is called the {focusing NLS equation}. In this case, if \eqref{initialgrowth} is not satisfied, i.e., if $\Gamma(u_0)\ge1,$ the solution $u$ of \eqref{NLS} may blow up in the $H^1-$norm in some finite time $T^*<\infty$ (cf., e.g., \cite{MR1, MR2, MR3} and the references therein). The focusing cases we consider in this paper are those with $\Gamma(u_0)<1$. A posteriori error analysis for controlling the error close to blowup for focusing cases with $\Gamma(u_0)\ge 1$ is a very interesting question and currently under investigation.

Problem \eqref{NLS} satisfies  two conservation laws that will be instrumental in the subsequent analysis. In particular, for $t\ge 0$, we have 
\begin{align}
 \|u(t)\| & =\|u_0\|, \quad  & \text{\emph{mass conservation}}, \label{cl1} \\
 \|\nabla u(t)\|^2 - \frac{\la}{\alpha(p+1)}\|u(t)\|^{2p+2}_{L^{2p+2}} & = \|\nabla u_0\|^2  - \frac{\la}{\alpha(p+1)} \|u_0\|^{2p+2}_{L^{2p+2}}, \quad & \text{\emph{energy conservation}} . \label{cl2}
\end{align}

The final estimates include an exponential term of the $L^{2p}(L^\infty)-$norm of the approximation; this is due to Gronwall's inequality. This is an improvement compared to the existing results on the a priori error analysis, where the exponential of the $L^\infty(L^\infty)-$norm of the approximation appears. Although any exponential term may seem pessimistic, it actually reflects the nonlinear nature of the problem. A similar term appears in the  a posteriori error analysis for semilinear parabolic equations with possible blowup in finite time; cf. \cite{CGKM,KM}. In fact, as illustrated in \cite{CGKM}, this term enables the proposition of an efficient space-time adaptive algorithm leading to blowup detection and accurate numerical approximation of blowup times. In the cases of NLS equations \eqref{NLS} we expect that the exponential term will also be proven beneficial towards the development of an efficient time-space adaptive algorithm in the spirit of \cite{CGKM}. This is the subject of a forthcoming paper. In this paper we investigate the behavior of the exponential term numerically in the last section of the paper.

The paper is organized as follows. In Sections~\ref{timediscr},~\ref{apostimdis} we consider only time discretization using a relaxation Crank-Nicolson-type scheme. In particular, in Section \ref{timediscr} we introduce some additional notation and generalize the relaxation scheme of \cite{Besse} to variable time steps. We define an appropriate time reconstruction and study its properties. Section \ref{apostimdis} is devoted to the proof of optimal order  a posteriori error estimates in the  $L^{\infty}(L^2)-$norm for the time discrete scheme. The analysis requires a careful use of the Gagliardo-Nirenberg inequality; the two conservation laws are then used for the boundedness of the solution in the $H^1-$norm. The fully discrete relaxation Crank-Nicolson finite element  scheme is introduced in Section~\ref{fullydiscr}. In this case, we use finite element spaces that are allowed to change from one time step to another. With the help of the elliptic reconstruction of \cite{MN1} and the time reconstruction of Section~\ref{timediscr} we define an appropriate time-space reconstruction and study its properties. In Section~\ref{FDapost} we provide a posteriori error estimates for the fully discrete scheme. To obtain the estimates we use the machinery developed in Section~\ref{apostimdis} for the time discrete scheme as well as residual-type estimators to control the terms coming from the use of the elliptic reconstruction. The analysis is quite technical, but eventually leads to a posteriori error bounds in the $L^\infty(L^2)-$norm that are expected to be of optimal order of accuracy. Finally, in Section~\ref{numexp} we present numerical experiments, using uniform partitions in space and time. More specifically, we verify that the obtained estimators are indeed of optimal oder of accuracy and we study the  behavior of the exponential term.
%
\section{Time discrete schemes}\label{timediscr}
%
As already mentioned in the Introduction, we first consider time discrete schemes, in an attempt to present clearly the main ideas of the technical analysis caused by the nonlinear nature of problem \eqref{NLS}.
%
\subsection{A  relaxation Crank-Nicolson-type method}
We consider a partition $0=:t_0<t_1<\cdots<t_N:=T$ of $[0,T]$ and let $k_n:=t_{n+1}-t_n$ and $I_n:=(t_n,t_{n+1}]$, $0\le n\le N-1,$ denote the variable time steps and subintervals of $[0,T],$ respectively. Let also $k:=\max_{1\le n\le N}k_n$. We further assume that there exists an absolute constant $c\in\mathbb{R}^+$ such that
\begin{equation}
\label{mildstepcons}
k_n\le c k_{n-1},\quad 1\le n\le N-1.
\end{equation}
The mild constraint \eqref{mildstepcons} between consecutive time steps appears in the analysis of other time-stepping methods with variable time steps. For example, it appears in \cite{MN2}, in the a posteriori error analysis for discontinuous Galerkin in time methods.

The relaxation Crank-Nicolson-type scheme for \eqref{NLS} is defined as follows: We seek approximations $U^n\in H_0^1(\Om)$ to $u(t_n)$, $0\le n\le N,$ such that
\begin{equation}
\label{CNrelax} \left \{
\begin{aligned}
& \frac{k_{n-1}}{k_n+k_{n-1}}\Phi^{n+\frac12}+\frac{k_n}{k_n+k_{n-1}}\Phi^{n-\frac12}=|U^n|^{2p},\quad 0\le n\le N-1,    \\
&\pa U^n-\ii\alpha\D U^{n+\frac12}=\ii\lambda\Phi^{n+\frac12}U^{n+\frac12},\qquad\hspace{1.35cm} 0\le n\le N-1,
\end{aligned}
\right.
\end{equation}
with $k_{-1}:=k_0$, $\Phi^{-\frac12}=|u_0|^{2p}$ and $U^0=u_0$. In \eqref{CNrelax} we also used the notation
\begin{equation}
\label{notation}
\pa U^n:=\frac{U^{n+1}-U^n}{k_n}\ \text{ and }\ U^{n+\frac12}:=\frac{U^{n+1}+U^n}2.
\end{equation}
The relaxation scheme \eqref{CNrelax} was introduced for constant time steps by Besse in \cite{Besse} for the numerical solution of \eqref{NLS}. Here we generalise Besse's method to the case of variable time steps.

We next present briefly how method \eqref{CNrelax} can be obtained to make it clear to the reader that the generalisation of Besse's scheme in \cite{Besse} indeed leads to \eqref{CNrelax} for variable time steps. First we rewrite the NLS equation in \eqref{NLS} equivalently as the system of the following two equations:
\begin{equation}
\label{NLSsystem} \left \{
\begin{aligned} 
&\phi=|u|^{2p} &&\quad\mbox{in ${\varOmega}\times (0,T]$,}& \\
& \p_tu-\ii\alpha\D u=\ii\lambda\phi u &&\quad\mbox{in ${\varOmega}\times (0,T]$.}& 
\end{aligned}
\right.
\end{equation}

Recall that the Crank-Nicolson method for \eqref{NLS} reads as: for $0\le n\le N,$ find $U^n\in H_0^1(\Om)$ such that
\begin{equation}
\label{CN}
\pa U^n-\ii\alpha\D U^{n+\frac12}=\ii\lambda|U^{n+\frac12}|^{2p}U^{n+\frac12},\quad 0\le n\le N-1,
\end{equation}
with $U^0=u_0$. Note that using the equivalent system-form \eqref{NLSsystem}, $|U^{n+\frac12}|^{2p}$ is an approximation of
$$\phi(t_{n+\frac12})=|u(t_{n+\frac12})|^{2p}\ \text{ with }\ t_{n+\frac12}:=\frac{t_{n+1}+t_n}2.$$ 
The idea now is to replace $|U^{n+\frac12}|^{2p}$ in \eqref{CN} by an appropriate approximation of $\phi(\tha)$, in an attempt to avoid the costly numerical treatment of the nonlinearity.
This is where the first equation of \eqref{CNrelax} is involved. In particular, it is conjectured that if, $\Phi^{-\frac12}=|u_0|^{2p}$ and provided regularity on the exact solution $u$ of \eqref{NLS}, then
\begin{equation}
\label{approx1}
|u(\tha)|^{2p}=\phi(\tha)=\varPhi^{n-\frac12}+\frac{k_n+k_{n-1}}{k_{n-1}}\left(|u(t_n)|^{2p}-\Phi^{n-\frac12}\right)+\mathcal{O}(k^2),\ 0\le n\le N-1.
\end{equation}
Neglecting $\mathcal{O}(k^2)$ in \eqref{approx1} and replacing $\phi(t_{n+\frac12})$ by $\Phi^{n+\frac12}$ and $u(t_n)$ by $U^n$, we obtain the first equation in \eqref{CNrelax}. We prove conjecture \eqref{approx1} formally:
\begin{equation}
\label{approx2}
\phi(\tha)=\phi(t_{n-\frac12})+\frac{k_n+k_{n-1}}2\p_t\phi(t)|_{t=t_{n-\frac12}}+\mathcal{O}\left(\frac12(k_n^2+k_{n-1}^2)\right),
\end{equation}
where
\begin{equation}
\label{timeder}
\p_t\phi(t)|_{t=t_{n-\frac12}}=\frac2{k_{n-1}}\left(\phi(t_n)-\phi(t_{n-\frac12})\right)+\mathcal{O}(k_{n-1}).
\end{equation}
Invoking \eqref{timeder} in \eqref{approx2} we deduce
$$\phi(\tha)=\phi(t_{n-\frac12})+\frac{k_n+k_{n-1}}{k_{n-1}}\left(\phi(t_n)-\phi(t_{n-\frac12})\right)+\mathcal{O}(k^2),$$
since $k=\max_{0\le n\le N-1}k_n$. An inductive argument and \eqref{mildstepcons}  lead to \eqref{approx1} (recall that $\phi(t_n)=|u(t_n)|^{2p}$).

Note that with scheme \eqref{CNrelax}, the nonlinear term at each time $t_n$ is computed explicitly, avoiding the solution of a nonlinear equation, as in the case of the Crank-Nicolson method \eqref{CN}. At the same time, the term involving the laplacian ($\D u$) is discretized in time implicitly, preserving good stability properties of the numerical scheme \eqref{CNrelax}. Actually, the second equation in \eqref{CNrelax} produces approximations that coincide with the Crank-Nicolson approximations of the linear Schr\"odinger equation
\begin{equation}
\label{linschr}
\p_t u-\ii\alpha\D u=\ii\lambda V(x,t)u\ \text{ in }\Om\times(0,T],\quad V(x,t):=\Phi^{n+\frac12}(x),\ \text{ in } \Om\times I_n,
\end{equation}
 i.e., with potential $V$ a piecewise constant in time function defined through $\Phi^{n+\frac12},\, 0\le n\le N-1.$
 From this point of view, the relaxation scheme \eqref{CNrelax} may also be regarded as a linearised  Crank-Nicolson  method for the NLS problem \eqref{NLS}. Details on stability and convergence results for scheme \eqref{CNrelax} can be found in \cite{Besse}. In particular, method \eqref{CNrelax} is expected to be second order accurate, \cite{Besse}.
 
 Moreover, the relaxation scheme \eqref{CNrelax} satisfies $\|U^n\|=\|U^0\|,\, 0\le n\le N,$ which is the discrete analogue of the mass conservation \eqref{cl1}. This can easily be proven by taking the $L^2-$inner product with $U^{n+\frac12}$ in the second equation of \eqref{CNrelax} and then real parts and by noting that $\Phi^{n+\frac12},\, 0\le n\le N-1,$ is always real. Also, as it has been proven in \cite{Besse}, for $p=1$ and uniform time steps, the numerical scheme \eqref{CNrelax} also satisfies a discrete analogue of the energy conservation \eqref{cl2}. 

Our goal is to derive optimal order a posteriori error bounds for \eqref{NLS} when it is discretized in time by \eqref{CNrelax}.  In order to achieve this, we first define the continuous in time approximation $U(t)$ to $u(t)$, $0\le t\le T$, by linearly interpolating between the nodal values $U^n$ and $U^{n+1}$, i.e., let $U:[0,T]\to H_0^1(\Om)$ be defined as
\begin{equation}
\label{linearint}
U(t):=\ell_0^n(t)U^n+\ell_1^n(t)U^{n+1}=U^{n+\frac12}+(t-t_{n+\frac12})\pa U^n,\quad t\in I_n,
\end{equation}
with $\ell_0^n(t):=\frac{t_{n+1}-t}{k_n}$ and $\ell_1^n(t):=\frac{t-t_n}{k_n}$, $0\le n\le N-1.$

It is clear that  if $u(t_n)-U^n=\mathcal{O}(k^2),\, 0\le n\le N$, then for $t\in[0,T]$, we have  $u(t)-U(t)=\mathcal{O}(k^2),$  as well. However, it is well known, cf., e.g., \cite{AMN1, D, LPP}, that using $U(t)$ in the a posteriori error analysis leads to estimates of first instead of second order of accuracy, which is the expected order of accuracy of method \eqref{CNrelax}. For this reason we shall use a reconstruction $\hat U$ of $U$, \cite{AMN1}. An alternative reconstruction  for $U$ and the scheme \eqref{CNrelax} can be proposed by following similar arguments as in \cite{LPP}.
%
\subsection{Time-reconstruction and its properties}
We introduce now a reconstruction $\hat U$ of $U$. To this end, $\hat U(t)$, $t\in[0,T]$, is defined to be the piecewise quadratic polynomial  in $t$,
\begin{equation}
\label{CNrelaxrec}
\hat U(t):=U^n+\ii\alpha\int_{t_n}^t\D U(s)\,ds+\ii\lambda\int_{t_n}^t\Phi^{n+\frac12}U(s)\,ds,\quad t\in I_n.
\end{equation}
As expected  $\hat U$ coincides with the Crank-Nicolson reconstruction of \cite{AMN1} (see also \cite{Kyza}) for the linear Schr\"odinger equation \eqref{linschr}. It is also worth mentioning that $\hat U(t)\in H_0^1(\Om),\, t\in[0,T]$, provided compatibility conditions on the initial data, cf., \cite{PhDKyza,Kyza}. However, as we will see on Section~\ref{fullydiscr}, where fully discrete schemes are studied, $\hat U(t)\in H_0^1(\Om),\, t\in[0,T],$ automatically, without further assumptions on the initial data. Therefore, from now on we assume that $\hat U$ belongs to $H_0^1(\Om).$
\begin{proposition}[properties of $\hat{U}$]\label{proprec}
The reconstruction $\hat U$ defined in \eqref{CNrelaxrec} is equivalently written as 
\begin{equation}
\label{CNrelaxrec2}
\hat U(t)= U^n+\frac\ii2(t-t_n)(\alpha\D+\lambda\Phi^{n+\frac12})\left(U(t)+U^n\right),\quad t\in I_n.
\end{equation}
Moreover, it satisfies
\begin{equation}
\label{pertPDE}
\p_t\hat U-\ii\alpha\D U=\ii\lambda\Phi^{n+\frac12}U\ \text{ in } I_n
\end{equation}
and $\hat U(t_n^+)=U^n,$ $\hat U(t_{n+1})=U^{n+1},\, 0\le n\le N-1$; in particular, $\hat U$ is a time-continuous function.
\end{proposition}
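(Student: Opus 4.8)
The plan is to verify the three claims in Proposition~\ref{proprec} essentially by direct computation, starting from the integral definition \eqref{CNrelaxrec} and exploiting the fact that $U$ is affine in $t$ on each $I_n$.

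First I would establish the equivalent closed form \eqref{CNrelaxrec2}. On $I_n$ we have from \eqref{linearint} that $U(s)=U^{n+\frac12}+(s-t_{n+\frac12})\pa U^n$ is linear in $s$, so $\D U(s)$ is also affine in $s$. Hence $\int_{t_n}^t\D U(s)\,ds$ can be evaluated exactly: the integral of an affine function over $[t_n,t]$ equals $(t-t_n)$ times the value at the midpoint $\frac{t_n+t}2$. By linearity that midpoint value is $\frac12\bigl(U(t)+U(t_n)\bigr)=\frac12(U(t)+U^n)$, and the same identity holds verbatim for $\int_{t_n}^t\Phi^{n+\frac12}U(s)\,ds$ since $\Phi^{n+\frac12}$ is constant in $s$ on $I_n$. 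Substituting both into \eqref{CNrelaxrec} and collecting the operator $\alpha\D+\lambda\Phi^{n+\frac12}$ gives exactly \eqref{CNrelaxrec2}. This step is the main computational content, but it is routine once one observes that the trapezoidal/midpoint rule is \emph{exact} for affine integrands.

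Next, the differential identity \eqref{pertPDE} follows immediately by differentiating \eqref{CNrelaxrec} in $t$ on $I_n$ via the fundamental theorem of calculus: the two integral terms differentiate to $\ii\alpha\D U(t)$ and $\ii\lambda\Phi^{n+\frac12}U(t)$ respectively, and the constant $U^n$ drops out, yielding $\p_t\hat U=\ii\alpha\D U+\ii\lambda\Phi^{n+\frac12}U$, which rearranges to \eqref{pertPDE}. For the nodal and continuity claims, evaluating \eqref{CNrelaxrec} at $t=t_n^+$ kills both integrals and leaves $\hat U(t_n^+)=U^n$. For $t=t_{n+1}$ I would use the closed form \eqref{CNrelaxrec2} with $U(t_{n+1})=U^{n+1}$, so that $\frac12(U(t_{n+1})+U^n)=U^{n+\frac12}$ and $t_{n+1}-t_n=k_n$; then \eqref{CNrelaxrec2} reads $\hat U(t_{n+1})=U^n+\frac{\ii}{2}k_n(\alpha\D+\lambda\Phi^{n+\frac12})\cdot 2U^{n+\frac12}=U^n+\ii k_n\alpha\D U^{n+\frac12}+\ii k_n\lambda\Phi^{n+\frac12}U^{n+\frac12}$, and invoking the second equation of \eqref{CNrelax} (which says $\pa U^n-\ii\alpha\D U^{n+\frac12}=\ii\lambda\Phi^{n+\frac12}U^{n+\frac12}$, i.e.\ $k_n\ii\alpha\D U^{n+\frac12}+k_n\ii\lambda\Phi^{n+\frac12}U^{n+\frac12}=U^{n+1}-U^n$) collapses the right-hand side to $U^{n+1}$.

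Finally, time-continuity of $\hat U$ is a direct consequence of the two nodal identities: since $\hat U(t_{n+1})=U^{n+1}$ from the left on $I_n$ and $\hat U(t_{n+1}^+)=U^{n+1}$ from the right on $I_{n+1}$, the values match at every interior node. I expect no genuine obstacle here; the only point requiring care is the interplay in the last step, where the scheme relation \eqref{CNrelax} must be used to identify $\hat U(t_{n+1})$ with $U^{n+1}$ — this is precisely the design feature that makes $\hat U$ a \emph{reconstruction} rather than a generic interpolant, and it is what forces the exact-quadrature observation above to be consistent with the discrete equation.
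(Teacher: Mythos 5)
Your proposal is correct and follows essentially the same route as the paper: the exactness of the midpoint evaluation for the affine integrand gives $\int_{t_n}^t U(s)\,ds=\tfrac12(t-t_n)\left(U(t)+U^n\right)$ and hence \eqref{CNrelaxrec2}, differentiation of \eqref{CNrelaxrec} gives \eqref{pertPDE}, and the nodal values follow by evaluating at $t_n^+$ and at $t_{n+1}$ together with the scheme relation \eqref{CNrelax}. No issues.
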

\begin{proof}
First we note that $\int_{t_n}^tU(s)\,ds=\frac12(t-t_n)\left(U(t)+U^n\right)$, and therefore, \eqref{CNrelaxrec2} is obtained from \eqref{CNrelaxrec}, whilst \eqref{pertPDE} is directly obtained by differentiation in time of \eqref{CNrelaxrec}. Finally, $U(t_n^+)=U^n$ is obvious once more from \eqref{CNrelaxrec}, while \eqref{CNrelaxrec2} gives, for $t=t_{n+1},$
$$\hat{U}(t_{n+1})=U^n+\ii\alpha k_n\D U^{n+\frac12}+\ii\lambda k_n\Phi^{n+\frac12}U^{n+\frac12},$$
whence $\hat U(t_{n+1})=U^{n+1}$ follows from \eqref{CNrelax}.
\end{proof}

The a posteriori quantity $\hat r$, by which $\hat U$ misses satisfying the NLS equation in \eqref{NLS}, is the residual and it is defined as 
\begin{equation}
\label{residual1}
\hat r:=\p_t \hat U-\ii\alpha\D\hat U-\ii\lambda f(\hat U)\ \text{ in } I_n.
\end{equation}
By virtue of \eqref{pertPDE}, we have that  $\hat r $ is written as 
\begin{equation}
\label{residual2}
\hat r=-\ii\alpha\D(\hat U-U)-\ii\lambda\big(f(\hat U)-f(U)\big)-\ii\lambda(|U|^{2p}-\Phi^{n+\frac12})U\ \text{ in } I_n.
\end{equation}
We would like $\hat r$ to  decrease as fast as the order of the method, i.e., we would like $\hat r$ to be of second order of accuracy, provided that $u(t_n)-U^n=\mathcal{O}(k^2),\, 0\le n\le N.$ Towards that direction, in the next proposition we compute the difference $\hat U-U$.
\begin{proposition}[the difference $\hat U-U$]\label{diffprop}
The difference $\hat U-U$ can be expressed as 
\begin{equation}
\label{difference1}
(\hat U-U)(t)=-\frac{\ii}2(t-t_n)(t_{n+1}-t)(\alpha\D+\lambda\Phi^{n+\frac12})\pa U^n,\quad t\in I_n.
\end{equation}
\end{proposition}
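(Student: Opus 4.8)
The plan is to combine the two equivalent representations of $\hat U$ that are already available: the closed form \eqref{CNrelaxrec2} for $\hat U(t)$ and the linear-interpolant form \eqref{linearint} for $U(t)$. The cleanest route is to subtract $U(t)$ from $\hat U(t)$ directly and to massage the result into the claimed product $(t-t_n)(t_{n+1}-t)$ in the quadratic coefficient.

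First I would start from \eqref{CNrelaxrec2}, which reads $\hat U(t)=U^n+\frac{\ii}{2}(t-t_n)(\alpha\D+\lambda\Phi^{n+\frac12})(U(t)+U^n)$. For the linear interpolant I would use the form $U(t)=U^n+(t-t_n)\pa U^n$ on $I_n$, which follows immediately from \eqref{linearint} (equivalently from $\ell_0^n,\ell_1^n$). The subtraction then gives
\begin{equation}
\label{diffplan}
(\hat U-U)(t)=\frac{\ii}{2}(t-t_n)(\alpha\D+\lambda\Phi^{n+\frac12})\bigl(U(t)+U^n\bigr)-(t-t_n)\pa U^n .
\end{equation}
Next I would substitute $U(t)+U^n=2U^n+(t-t_n)\pa U^n$ into the first term and, crucially, use the second equation of the scheme \eqref{CNrelax}, namely $\pa U^n=\ii(\alpha\D+\lambda\Phi^{n+\frac12})U^{n+\frac12}$, to rewrite the stray $\pa U^n$ so that the operator $(\alpha\D+\lambda\Phi^{n+\frac12})$ is factored out of every term. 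Writing $U^{n+\frac12}=\frac12(U(t)+U^n)+\text{correction}$, or more directly observing that $U^n+\frac12(t-t_n)\pa U^n$ can be related to $U^{n+\frac12}$, lets me express the whole right-hand side as $\frac{\ii}{2}(\alpha\D+\lambda\Phi^{n+\frac12})$ acting on an explicit scalar multiple of $\pa U^n$.

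The main obstacle, and the one routine-looking step that actually carries the content, is the bookkeeping that turns the linear-in-$(t-t_n)$ coefficients into the symmetric quadratic factor $(t-t_n)(t_{n+1}-t)$. Concretely, after factoring out the operator one is left with a scalar (in $t$) combination of $(t-t_n)$ and $(t-t_n)^2$ multiplying $\pa U^n$; using $t_{n+1}-t_n=k_n$ and $\pa U^n=\ii(\alpha\D+\lambda\Phi^{n+\frac12})U^{n+\frac12}$ one should check that this combination collapses to $-\frac{\ii}{2}(t-t_n)(t_{n+1}-t)$. I expect the identity $\frac12(t-t_n)^2-\frac12 k_n(t-t_n)=-\frac12(t-t_n)(t_{n+1}-t)$ (which is just $t_{n+1}-t=k_n-(t-t_n)$) to be the algebraic heart of the calculation, so the verification is short once the scheme relation has been used to eliminate the lower-order-in-$(t-t_n)$ piece. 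Finally I would sanity-check the formula at the endpoints: \eqref{difference1} vanishes at $t=t_n$ and $t=t_{n+1}$, consistent with $\hat U(t_n^+)=U^n=U(t_n^+)$ and $\hat U(t_{n+1})=U^{n+1}=U(t_{n+1})$ from Proposition~\ref{proprec} and \eqref{linearint}, which provides a useful independent confirmation of the result.
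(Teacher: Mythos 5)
Your proposal is correct and follows exactly the paper's route: subtract the linear interpolant \eqref{linearint} from the closed form \eqref{CNrelaxrec2}, use the second equation of \eqref{CNrelax} to eliminate the lower-order term, and collapse the coefficients via $t_{n+1}-t=k_n-(t-t_n)$. The algebraic identity you single out is indeed the whole content of the paper's ``basic algebraic manipulations,'' and your endpoint sanity check is consistent with Proposition~\ref{proprec}.
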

\begin{proof}
We compute $\hat U-U$ by subtracting \eqref{linearint} from \eqref{CNrelaxrec2} and we use \eqref{CNrelax}.
The expression \eqref{difference1} follows then by basic algebraic manipulations. 
\end{proof}

Given that the relaxation scheme \eqref{CNrelax} produces second order approximations to the exact solution at the nodes $t_n,\, 0\le n\le N$, we conclude that $\hat U-U$ is expected to be of second order of accuracy. Also, from the first equation of \eqref{CNrelax}, \eqref{approx1} and \eqref{mildstepcons}, we expect that $\Phi^{n+\frac12}=|u(\tha)|^{2p}+\mathcal{O}(k^2),\, 0\le n\le N-1,$ as $|z|^{2p}$ is a locally Lipschitz continuous function for $p\ge\frac12.$ Using all these and the fact that $f(z)=|z|^{2p}z$ is also a locally Lipschitz function for $p\ge\frac12$, and resorting back to \eqref{residual2}, we deduce that $\hat r$ is expected to be second order accurate.
%
\subsection{Variational formulation \& Error equation}
Problem \eqref{NLS} is equivalently written,  in variational form, for $t\in[0,T]$, as
\begin{equation}
\label{NLSvar} \left \{
\begin{aligned}
&\langle \p_tu(t),v\rangle+\mathrm{i}\alpha\langle\nabla u(t),\nabla v\rangle=\ii\la \langle f\left(u(t)\right),v\rangle,    &&\quad\forall v\in H_0^1(\Om), & \\
&u(\cdot,0)=u_0 &&\quad\mbox{in ${\varOmega}$}.&
\end{aligned}
\right.
\end{equation}
Similarly, the reconstruction $\hat U$ defined in \eqref{CNrelaxrec} satisfies, for $t\in I_n$, the problem
\begin{equation}
\label{recvar} \left \{
\begin{aligned}
&\langle \p_t\hat U(t),v\rangle+\mathrm{i}\alpha\langle\nabla \hat U(t),\nabla v\rangle=\ii\la \langle f\left(\hat U(t)\right),v\rangle+\langle\hat r(t),v\rangle,    &&\quad\forall v\in H_0^1(\Om), & \\
&u(\cdot,0)=u_0 &&\quad\mbox{in ${\varOmega}$},&
\end{aligned}
\right.
\end{equation}
cf., \eqref{residual1}.

We denote by $\hat e:=u-\hat U$ the error between the exact solution $u$ and the reconstruction $\hat U$. Then subtracting the first equation of \eqref{recvar} from the first equation of \eqref{NLSvar} we derive the basic error equation
\begin{equation}
\label{errorbasic}
\langle \hat e_t(t),v\rangle+\mathrm{i}\alpha\langle\nabla \hat e(t),\nabla v\rangle=\ii\la \langle f\left(u(t)\right)-f\left(\hat U(t)\right),v\rangle-\langle\hat r(t),v\rangle,   \ \forall v\in H_0^1(\Om),\, t\in I_n,
\end{equation} 
with  $\hat e(0)=0.$
Taking $v=\hat e$ in the error equation \eqref{errorbasic} and then real parts reveals
\begin{equation}
\label{error2}
\frac12\frac d{dt}\|\hat e(t)\|^2=\la\Rea\ii\langle f(u)-f(\hat U),\hat e\rangle(t)-\Rea\langle\hat r,\hat e\rangle(t),\quad t\in I_n.
\end{equation}

In Section~\ref{apostimdis} we analyse how we treat the term $\Rea\ii\langle f(u)-f(\hat U),\hat e\rangle$ on the right-hand side of \eqref{error2} to derive an a posteriori error bound for \eqref{NLS}.
%
\section{A posteriori error control for time discrete schemes}\label{apostimdis}
In view of \eqref{error2} it is evident that we have to handle the term $\Rea\ii\langle f(u)-f(\hat U),\hat e\rangle,$ which arises due to the nonlinear nature of the problem. To this direction we will use the Gagliardo-Nirenberg inequality \eqref{GN}, the conservation laws \eqref{cl1} and \eqref{cl2} and some other ingredients  presented in the next subsection.
\subsection{Main Ingredients}
Firstly, we compute the Gateaux derivative of $f(z)=|z|^{2p}z$ in the direction of $v$:
\begin{equation}
\label{Gd}
\begin{aligned}
D_vf(z):&=\lim_{\varepsilon\to 0}\frac1\varepsilon\left(f(z+\varepsilon v)-f(z)\right)
=2p|z|^{2(p-1)}z\Rea(\bar z v)+|z|^{2p}v.
\end{aligned}
\end{equation}
Thus using the Mean Value Theorem, we can write, for $z_1, z_2\in\mathbb{C}$, that
\begin{equation}
\label{MTV}
f(z_1)-f(z_2)=D_vf(z),
\end{equation}
where $z$ is a convex combination of $z_1$ and $z_2$. In other words, there exists $s\in[0,1]$ such that
\begin{equation}
\label{convex}
z=sz_1+(1-s)z_2.
\end{equation}

Hence, plugging \eqref{Gd} into \eqref{MTV} with $z_1=z_1(t)$, $z_2=z_2(t)$ and $v=v(t)=z_1(t)-z_2(t)$, we obtain, for $t\in[0,T],$
\begin{equation}
\label{NLdiff}
\begin{aligned}
\big(f(z_1)-f(z_2)\big)(t)=\Big[&2p|sz_1+(1-s)z_2|^{2(p-1)}(sz_1+(1-s)z_2)\Rea\left((s\bar z_1+(1-s)\bar{z}_2)v\right)\\&
+|sz_1+(1-s)z_2|^{2p}v\Big] (t),
\end{aligned}
\end{equation}
with $s(t)\in[0,1]$ for $t\in[0,T].$ In \eqref{NLdiff} we also used \eqref{convex}. Therefore, in the case of $z_1=u$, $z_2=\hat U$ and $v=\hat e$, and by noting that $su+(1-s)\hat U=s\hat e+\hat U$ we derive
\begin{equation}
\label{nlp1}
\begin{aligned}
&|\Rea\ii\langle f(u)-f(\hat U),\hat e\rangle(t)|\\&
=2p|\Rea\ii\langle|su+(1-s)\hat U|^{2(p-1)}(su+(1-s)\hat U)\Rea\left((s\bar u+(1-s)\bar{\hat U})\hat e\right),\hat e\rangle(t)|\\&\le2p\langle|s\hat e+\hat U|^{2p}\hat e,\hat e\rangle(t).
\end{aligned}
\end{equation}

Next to handle the term $|s\hat e+\hat U|^{2p}$ in \eqref{nlp1} we use the standard inequality
\begin{equation}
\label{SI}
(a+b)^q\le \gamma(q)(a^q+b^q)\quad \text{with}\quad \gamma(q):=\left\{\begin{aligned}&2^{q-1},\ \hspace{0.15cm}q\ge 1 \\&1,\ 0\le q\le1,\end{aligned}\right.
\end{equation}
valid for any $a,b\ge 0$ and $q\ge 0$. Inequality \eqref{SI} can easily be proven for $q\ge 1,$ by noticing that function $g(x)=x^q$ is convex for $x\ge 0$ and $q\ge 1$ (see also \cite{Rudin}). For $0\le q\le 1,$ inequality \eqref{SI} holds because in this case $g(x)=x^q$ is sublinear.

Applying \eqref{SI} to $|s\hat e+\hat U|^{2p}$ we get $|s\hat e+\hat U|^{2p}\le 2^{2p-1}(|\hat e|^{2p}+|\hat U|^{2p})$; recall that $0\le s\le 1$ and $p\ge\frac12.$ Hence \eqref{nlp1} becomes
\begin{equation}
\label{nlp2}
|\Rea\ii\langle f(u)-f(\hat U),\hat e\rangle(t)|\le 2^{2p}p\left(\|\hat e(t)\|^{2p+2}_{L^{2p+2}}+\|\hat U(t)\|^{2p}_{L^\infty}\|\hat e(t)\|^2\right).
\end{equation}
To estimate $\|\hat e(t)\|^{2p+2}_{L^{2p+2}}$ we refer to the Gagliardo-Nirenberg inequality \eqref{GN}:
\begin{equation}
\label{errorest1}
\begin{aligned}
\|\hat e(t)\|^{2p+2}_{L^{2p+2}}&\le B\|\nabla\hat e(t)\|^{pd}\|\hat e(t)\|^{p(2-d)+2}
\\&
=B\|\nabla\hat e(t)\|^{pd}\|\hat e(t)\|^{p(2-d)}\|\hat e(t)\|^2,
\end{aligned}
\end{equation}
with $B:=\beta^{2p+2}.$

Since $d\le 2$ and $\frac 12\le p\le\frac 2d,$ we have that $p(2-d)\ge 0$ and thus by \eqref{SI} we estimate
\begin{equation}
\label{errorest2}
\begin{aligned}
\|\hat e(t)\|^{p(2-d)}&\le \gamma\left(p(2-d)\right)\left(\|u(t)\|^{p(2-d)}+\|\hat U(t)\|^{p(2-d)}\right)
\\&
= \gamma\left(p(2-d)\right)\left(\|u_0\|^{p(2-d)}+\|\hat U(t)\|^{p(2-d)}\right),
\end{aligned}
\end{equation}
where the last equality holds because of \eqref{cl1}. Set $A:=|\la| 2^{2p}p\max\{1,  \gamma\left(p(2-d)\right)B\}.$ Then using \eqref{errorest1} and \eqref{errorest2} in \eqref{nlp2} we deduce, for $\la\ne 0$ \footnote{if $\la=0$ then A:=0, i.e., the problem is reduced to the linear one.} and $t\in[0,T],$ 
\begin{equation}
\label{nlp3}
|\Rea\ii\langle f(u)-f(\hat U),\hat e\rangle(t)|\le\frac A{|\la|}\left(\left(\|u_0\|^{p(2-d)}+\|\hat{U}(t)\|^{p(2-d)}\right)\|\nabla\hat e(t)\|^{pd}+\|\hat U(t)\|^{2p}_{L^\infty}\right)\|\hat e(t)\|^2,
\end{equation}
whereas returning back to the error equation \eqref{error2} we conclude that it can be written, for $t\in I_n$, as 
\begin{equation}
\label{error3}
\frac d{dt}\|\hat e(t)\|\le A \left(\left(\|u_0\|^{p(2-d)}+\|\hat{U}(t)\|^{p(2-d)}\right)\|\nabla\hat e(t)\|^{pd}+\|\hat U(t)\|^{2p}_{L^\infty}\right)\|\hat e(t)\|+\|\hat r (t)\|.
\end{equation}
From \eqref{error3} it is clear that we can bound the $L^\infty(L^2)-$norm of $\hat e$, using Gronwall's inequality, as long as we have an estimation for $\|\nabla\hat e(t)\|.$ We do this in the next subsection.

\begin{remark}[the $3d$ case]\upshape
If $d=3$, we have that $p(2-d)<0$ and \eqref{errorest2} fails. This is the reason why the three dimensional spatial case cannot be included in the analysis of this paper. 
\end{remark}
%
\subsection{A posteriori error bound in the $L^\infty(L^2)-$norm}
In order to state the main theorem of this section, we first estimate a posteriori  the gradient term in \eqref{error3}. Since $pd\ge \frac 12>0,$ \eqref{SI} implies
\begin{equation}
\label{graderror4}
\|\nabla\hat e(t)\|^{pd}\le \gamma(pd)\left(\|\nabla\hat U(t)\|^{pd}+\|\nabla u(t)\|^{pd}\right).
\end{equation}
In the next lemma we bound a posteriori the term $\|\nabla u(t)\|.$
\begin{lemma}[a posteriori bound for $\|\nabla u(t)\|$]
For $t\in[0,T],$ the following estimate holds true
\begin{equation}
\label{estgrad}
\|\nabla u(t)\|\le G(u_0)
\end{equation}
with
\begin{equation}
\label{boundgrad}
G(u_0):=\left\{\begin{aligned}&\left(\|\nabla u_0\|^2-\frac\la{\alpha(p+1)}\|u_0\|^{2p+2}_{L^{2p+2}}\right)^{1/2},\quad \la\le 0\ \\&\left(\|\nabla u_0\|^{2-pd}+\frac\la{\alpha(p+1)}\left(\beta^{2p+2}\|u_0\|^{p(2-d)+2}-\frac{\|u_0\|^{2p+2}_{L^{2p+2}}}{\|\nabla u_0\|^{pd}}\right)
\right)^{1/(2-pd)},\\&\hspace{10.0cm} \la>0,\, \frac12\le p<\frac2d \\&\frac1{\left(1-\Gamma(u_0)\right)^{1/2}}\left(\|\nabla u_0\|^2-\frac\la{\alpha(p+1)}\|u_0\|^{2p+2}_{L^{2p+2}}\right)^{1/2},\quad pd=2,\, \la>0,\,\Gamma(u_0)<1,\end{aligned}\right.
\end{equation}
where $\beta$ denotes the constant in the Gagliardo-Nirenberg inequality \eqref{GN} and $\Gamma(u_0)$ is given in \eqref{initialgrowth}.
\end{lemma}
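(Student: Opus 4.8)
The plan is to prove \eqref{estgrad} using \emph{only} the two conservation laws \eqref{cl1}, \eqref{cl2} together with the Gagliardo--Nirenberg inequality \eqref{GN}; no property of the scheme enters, since this is a statement about the exact solution $u$. The strategy is to convert the energy identity \eqref{cl2} into a closed inequality for the single quantity $\|\nabla u(t)\|$: I would solve \eqref{cl2} for $\|\nabla u(t)\|^2$, control the remaining term $\|u(t)\|^{2p+2}_{L^{2p+2}}$ by \eqref{GN}, and freeze $\|u(t)\|=\|u_0\|$ using mass conservation \eqref{cl1}. The resulting inequality is then inverted for $\|\nabla u(t)\|$, and the three branches of \eqref{boundgrad} correspond to the three regimes in which this inversion behaves differently.

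First I would dispose of the defocusing case $\la\le 0$. Here \eqref{cl2} reads
\[
\|\nabla u(t)\|^2=\|\nabla u_0\|^2-\frac{\la}{\alpha(p+1)}\|u_0\|^{2p+2}_{L^{2p+2}}+\frac{\la}{\alpha(p+1)}\|u(t)\|^{2p+2}_{L^{2p+2}},
\]
and since $\la\le 0$ the last term is nonpositive, which gives the first branch of \eqref{boundgrad} at once. For $\la>0$ that term is the obstruction. Applying \eqref{GN} with the exponent identities $(2p+2)\zeta=pd$ and $(2p+2)(1-\zeta)=p(2-d)+2$, and then \eqref{cl1}, I get $\|u(t)\|^{2p+2}_{L^{2p+2}}\le\beta^{2p+2}\|u_0\|^{p(2-d)+2}\|\nabla u(t)\|^{pd}$. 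Writing $C_0:=\|\nabla u_0\|^2-\frac{\la}{\alpha(p+1)}\|u_0\|^{2p+2}_{L^{2p+2}}$ and $c:=\frac{\la\beta^{2p+2}}{\alpha(p+1)}\|u_0\|^{p(2-d)+2}$, the identity above becomes the closed inequality
\[
\|\nabla u(t)\|^2\le C_0+c\,\|\nabla u(t)\|^{pd},\qquad t\in[0,T].
\]

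In the critical case $pd=2$ the right-hand side is \emph{linear} in $\|\nabla u(t)\|^2$, and a short computation shows $c=\Gamma(u_0)$ (use $p(2-d)+2=2p$ when $pd=2$, together with the definition \eqref{initialgrowth}). The inequality collapses to $(1-\Gamma(u_0))\|\nabla u(t)\|^2\le C_0$, which, since $\Gamma(u_0)<1$ by hypothesis, inverts immediately to the third branch of \eqref{boundgrad}; one also checks $C_0\ge 0$ by applying \eqref{GN} at $t=0$, so the bound is well defined and nonnegative.

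The genuinely delicate case is the subcritical one $pd<2$, where the inequality $\|\nabla u(t)\|^2\le C_0+c\|\nabla u(t)\|^{pd}$ is \emph{nonlinear} and cannot be inverted by elementary algebra; resolving it is the main obstacle. Here I would argue by continuity. Since $u\in C\big([0,T];H_0^1(\Om)\big)$, the map $t\mapsto\|\nabla u(t)\|$ is continuous with value $\|\nabla u_0\|$ at $t=0$, and one first verifies from \eqref{boundgrad} that $G(u_0)\ge\|\nabla u_0\|$ — indeed, rearranging shows $G(u_0)^{2-pd}=C_0/\|\nabla u_0\|^{pd}+c$, and the GN inequality at $t=0$ gives $\kappa\|u_0\|^{2p+2}_{L^{2p+2}}/\|\nabla u_0\|^{pd}\le c$, whence $G(u_0)^{2-pd}\ge\|\nabla u_0\|^{2-pd}$. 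One then checks that substituting $\|\nabla u(t)\|=G(u_0)$ is inconsistent with the closed inequality, because by the choice of $G(u_0)$ one has $G(u_0)^2-c\,G(u_0)^{pd}=C_0\,(G(u_0)/\|\nabla u_0\|)^{pd}\ge C_0$, with strict inequality once $C_0>0$ and $G(u_0)>\|\nabla u_0\|$. Thus $\|\nabla u(t)\|$ can never reach the level $G(u_0)$ from below; starting at $\|\nabla u_0\|\le G(u_0)$ and unable to cross it, it stays $\le G(u_0)$ for all $t$, giving the second branch. I expect the exponent bookkeeping in \eqref{GN} and the verification of this strict-crossing inequality (with careful tracking of the sign of $C_0$) to be the technical heart of the argument.
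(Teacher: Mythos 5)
Your overall strategy --- closing the energy identity \eqref{cl2} in the single unknown $\|\nabla u(t)\|$ by inserting the Gagliardo--Nirenberg inequality \eqref{GN} and freezing $\|u(t)\|=\|u_0\|$ via mass conservation --- is exactly the paper's, and your defocusing and critical branches coincide with the paper's proof (including the identification $c=\Gamma(u_0)$ when $pd=2$ and the check that $C_0\ge0$ there). Where you diverge is the subcritical focusing branch: the paper does not use continuity in $t$ at all. It factors the closed inequality as
\[
\|\nabla u(t)\|^{pd}\bigl(\|\nabla u(t)\|^{2-pd}-c\bigr)\le\|\nabla u_0\|^{pd}\bigl(\|\nabla u_0\|^{2-pd}-b\bigr),\qquad b:=\frac{\lambda}{\alpha(p+1)}\frac{\|u_0\|^{2p+2}_{L^{2p+2}}}{\|\nabla u_0\|^{pd}},
\]
and argues pointwise in $t$: either $\|\nabla u(t)\|\le\|\nabla u_0\|\le G(u_0)$ (using $b\le c$, i.e.\ \eqref{GN} at $t=0$), or $\|\nabla u(t)\|>\|\nabla u_0\|$, in which case dividing by the larger factor $\|\nabla u(t)\|^{pd}$ yields $\|\nabla u(t)\|^{2-pd}-c\le\|\nabla u_0\|^{2-pd}-b$, which is the second branch of \eqref{boundgrad}.

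Your barrier argument has a genuine gap that you flag but do not close: the sign of $C_0$. The step $G^2-cG^{pd}=C_0\bigl(G/\|\nabla u_0\|\bigr)^{pd}\ge C_0$ holds if and only if $C_0\ge0$ (since $(G/\|\nabla u_0\|)^{pd}\ge1$), and it \emph{reverses} when $C_0<0$. For focusing subcritical data the conserved energy $C_0$ can certainly be negative --- scale $u_0\mapsto Au_0$: the gradient term grows like $A^2$ while the potential term grows like $A^{2p+2}$ --- so this regime is not vacuous. When $C_0<0$ the set $\{X\ge0:\ X^2\le C_0+cX^{pd}\}$ is a closed interval $[X_1,X_2]$ with $X_1>0$, and $g(G):=G^2-cG^{pd}-C_0=C_0\bigl((G/\|\nabla u_0\|)^{pd}-1\bigr)\le0$ shows that $G$ lies \emph{inside} this interval rather than at its right endpoint; $G$ is therefore not a barrier, and the closed inequality alone only yields $\|\nabla u(t)\|\le X_2$ with $X_2>G$ whenever $C_0<0$ and $b<c$. (Your argument also needs the strictness hypothesis $G>\|\nabla u_0\|$, which fails when equality holds in \eqref{GN} at $t=0$.) To be fair, the paper's division step is itself only immediate when $\|\nabla u(t)\|^{2-pd}\ge c$ or $C_0\ge0$, but your route leans on an inequality that is outright false for $C_0<0$; to repair it you must either restrict to $C_0\ge0$ or replace the barrier level $G$ by the larger root $X_2$, which changes the constant in \eqref{boundgrad}.
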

\begin{proof}
We divide the proof into three parts.
First we assume that $\la\le 0$. Then from the energy conservation \eqref{cl2} we readily obtain
$$\|\nabla u(t)\|^2\le\|\nabla u_0\|^2-\frac\la{\alpha(p+1)}\|u_0\|^{2p+2}_{L^{2p+2}}.$$
Next we assume that $\la>0$ and $\frac12\le p\le\frac2d.$ Using the conservation laws \eqref{cl1}-\eqref{cl2} and invoking the Gagliardo-Nirenberg inequality \eqref{GN} to the term $\|u(t)\|^{2p+2}_{L^{2p+2}}$ leads to
\begin{equation}
\label{estgrad1}
\begin{aligned}
\|\nabla u(t)\|^{pd}&\left(\|\nabla u(t)\|^{2-pd}-\frac{B\la}{\alpha(p+1)}\|u_0\|^{p(2-d)+2}\right)\\&
\le\|\nabla u_0\|^{pd}\left(\|\nabla u_0\|^{2-pd}-\frac{\la}{\alpha(p+1)}\frac{\|u_0\|^{2p+2}}{\|\nabla u_0\|^{pd}}
\right).
\end{aligned}
\end{equation}
Hence if $\|\nabla u(t)\|>\|\nabla u_0\|$ we must have, from \eqref{estgrad1}, that
\begin{equation}
\label{estgrad2}
\|\nabla u(t)\|\le\left(\|\nabla u_0\|^{2-pd}-\frac\la{\alpha(p+1)}\frac{\|u_0\|^{2p+2}_{L^{2p+2}}}{\|\nabla u_0\|^{pd}}+\frac{B\la}{\alpha(p+1)}\|u_0\|^{p(2-d)+2}\right)^{1/(2-pd)},
\end{equation}
because $2-pd>0$. Note now that \eqref{GN} implies $\displaystyle\frac{\|u_0\|^{2p+2}_{L^{2p+2}}}{\|\nabla u_0\|^{pd}}\le B\|u_0\|^{p(2-d)+2}.$ This observation in combination with \eqref{estgrad2} give \eqref{estgrad} for $\la>0$ and $\frac12\le p<\frac 2d.$

Finally we consider the case $\la>0$, $\Gamma(u_0)<1$ and $pd=2$, i.e., the critical exponent. Using the same argumentation as for the derivation of \eqref{estgrad1} we obtain
$$\left(1-\Gamma(u_0)\right)\|\nabla u(t)\|^2\le\|\nabla u_0\|^2-\frac\la{\alpha(p+1)}\|u_0\|^{2p+2}_{L^{2p+2}},$$
which implies \eqref{estgrad} for this case as well, and the proof is complete.
\end{proof}
\begin{remark}[the critical exponent]\upshape
In view of \eqref{boundgrad} it is obvious that $G(u_0)$ blows up when $\Gamma(u_0)\to 1$. This is reasonable as estimate \eqref{estgrad} is proven uniformly for all $t,$ and for $\Gamma(u_0)\ge 1$ the $H^1-$norm of $u$ may blow up in finite time. 
\end{remark}

\begin{remark}[computational cost of $G(u_0)$]\upshape
Although $G(u_0)$ in \eqref{boundgrad} seems complicated, it is a global quantity and it is computed only once numerically.
\end{remark}

If we define
\begin{equation}
\label{recfunctional}
\Hf(\hat{U},u_0;t):=A\left(\gamma(pd)\left(\|u_0\|^{p(2-d)}+\|\hat U(t)\|^{p(2-d)}\right)\left(G(u_0)^{pd}+\|\nabla\hat U(t)\|^{pd}\right)+\|\hat U(t)\|^{2p}_{L^\infty}\right),
\end{equation}
we derive in light of \eqref{error3}, \eqref{graderror4} and \eqref{estgrad}
\begin{equation}
\label{error4}
\frac d{dt}\|\hat e(t)\|\le\Hf(\hat U, u_0;t)\|\hat e(t)\|+\|\hat r(t)\|,\quad t\in I_n.
\end{equation}
Thus, using Gronwall's inequality and keeping in mind that $\hat e=u-\hat U$ is a time-continuous function with $\hat e(0)=0$, we arrive at the main theorem of the section.
\begin{theorem}[a posteriori error estimate in $L^\infty(L^2)$]\label{apostdiscth}
The following local and global a posteriori error estimates in the $L^\infty(L^2)-$norm are valid:
\begin{equation}
\label{localest1}
\sup_{t\in I_n}\|(u-\hat U)(t)\|\le\exp\left(\int_{I_n}\Hf(\hat U, u_0;t)\,dt\right)\left(\|(u-\hat U)(t_n)\|+\int_{I_n}\|\hat r(t)\|\,dt\right),\ 0\le n\le N-1,
\end{equation}
and
\begin{equation}
\label{globalest1}
\|(u-\hat U)(t)\|\le\exp\left(\int_0^t\Hf(\hat U,u_0;s)\,ds\right)\int_0^t\|\hat r(s)\|\,ds, \quad t\in [0,T],
\end{equation}
where $\Hf(\hat U, u_0;t)$ is given in \eqref{recfunctional}, $u$ is the solution of \eqref{NLS}, $\hat U$ is the reconstruction \eqref{CNrelaxrec2} for the relaxation Crank-Nicolson-type scheme \eqref{CNrelax} and $\hat r$ is the residual given in \eqref{residual2}.\qed
\end{theorem}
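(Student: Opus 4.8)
The plan is to obtain both estimates as a direct consequence of the scalar differential inequality \eqref{error4}, since all of the genuinely analytic work — the Gagliardo--Nirenberg estimate of the nonlinear term and the conservation-law bound on $\|\nabla u\|$ — is already encoded in the functional $\Hf(\hat U,u_0;t)$ of \eqref{recfunctional}. Writing $y(t):=\|\hat e(t)\|$, inequality \eqref{error4} reads $y'(t)\le \Hf(\hat U,u_0;t)\,y(t)+\|\hat r(t)\|$ on each $I_n$, which is precisely a linear differential inequality to which Gronwall's lemma applies. I would first record the elementary but essential observation that $\Hf(\hat U,u_0;t)\ge 0$ for all $t$; this is clear from \eqref{recfunctional} because $A>0$, $\gamma(pd)\ge 1$, and every norm appearing there is nonnegative.

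For the local estimate \eqref{localest1}, I would fix $n$ and multiply \eqref{error4} by the integrating factor $\exp\!\left(-\int_{t_n}^t\Hf(\hat U,u_0;s)\,ds\right)$, so that the left-hand side becomes an exact derivative. Integrating from $t_n$ to an arbitrary $t\in I_n$ yields
\[
y(t)\le \exp\!\left(\int_{t_n}^t\Hf(\hat U,u_0;s)\,ds\right)\left(y(t_n)+\int_{t_n}^t \|\hat r(\tau)\|\exp\!\left(-\int_{t_n}^\tau\Hf(\hat U,u_0;s)\,ds\right)d\tau\right).
\]
Because $\Hf\ge 0$, the inner exponential weight is bounded by $1$, so the residual integral is controlled by $\int_{I_n}\|\hat r(\tau)\|\,d\tau$, while the outer factor is bounded by $\exp\!\left(\int_{I_n}\Hf(\hat U,u_0;s)\,ds\right)$. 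Taking the supremum over $t\in I_n$ and using $y(t_n)=\|(u-\hat U)(t_n)\|$ then gives \eqref{localest1}.

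For the global estimate \eqref{globalest1}, I would exploit that $\hat e=u-\hat U$ is time-continuous with $\hat e(0)=0$ (Proposition~\ref{proprec} and the error equation \eqref{errorbasic}), so $y$ is continuous on $[0,T]$ and absolutely continuous on each subinterval. Integrating \eqref{error4} from $0$ to $t$ across the interior nodes — legitimate precisely because $y$ does not jump at the $t_n$ — and using $y(0)=0$ produces the integral inequality $y(t)\le \int_0^t\Hf(\hat U,u_0;s)\,y(s)\,ds+\int_0^t\|\hat r(s)\|\,ds$. Since the forcing term $\int_0^t\|\hat r(s)\|\,ds$ is nondecreasing in $t$, the integral form of Gronwall's inequality applies verbatim and yields \eqref{globalest1}.

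I do not expect a genuine obstacle: the theorem is an immediate corollary of \eqref{error4}, and the only points requiring (minor) care are the sign condition $\Hf\ge 0$, which is what lets me discard the inner exponential weight and retain the clean factor $\int_{I_n}\|\hat r\|$ in \eqref{localest1}, and the time-continuity of $\hat e$ at the interior nodes, which is what permits a single global application of Gronwall over $[0,T]$ rather than a more delicate chaining of the local bounds.
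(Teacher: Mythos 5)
Your argument is correct and coincides with the paper's own (very brief) proof, which likewise obtains both estimates by applying Gronwall's inequality to \eqref{error4} together with the time-continuity of $\hat e$ and $\hat e(0)=0$. The details you supply — the sign condition $\Hf\ge 0$, the integrating factor on each $I_n$, and the global integral form of Gronwall across the nodes — are exactly the standard steps the paper leaves implicit.
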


\begin{remark}[focusing NLS]\upshape
Estimates \eqref{localest1} and \eqref{globalest1} blow up exponentially when $\Gamma(u_0)\to 1$, because $G(u_0)$ (and thus $\Hf(\hat U, u_0;t)$) blows up in this case (cf., \eqref{boundgrad}). Thus the methodology of this section cannot be generalised to the focusing NLS equation with the aim to control the error close to the blowup time. In contrast to the corresponding parabolic equation with blowup, cf., \cite{CGKM}, energy methods are not appropriate for the focusing NLS with blowup and other techniques should be applied for the a posteriori error analysis of these equations.

\end{remark}

\begin{remark}[local estimate \eqref{localest1}]\upshape
Estimate \eqref{localest1} may be used for the proposition of an efficient adaptive algorithm as in \cite{CGKM}. This estimate is more appropriate for adaptivity due its local nature. Note that $\|(u-\hat U)(t_{n})\|$ comes from the previous time step, so if we have $\displaystyle\max_{t\in I_{n-1}}\|(u-\hat U)(t)\|\le \text{TOL}$, where $\text{TOL}$ is a given tolerance, then $\text{TOL}$ can replace $\|(u-\hat U)(t_{n})\|$ in \eqref{localest1}.
\end{remark}
%
\subsection{An improved estimate for the one-dimensional case}
In this subsection we consider the one spatial dimension, $d=1$. Let $\eta(t)$ denote the right-hand side in \eqref{globalest1} to the power $p$. Then in view of \eqref{globalest1}, \eqref{errorest1} takes the form
$$\|\hat e(t)\|^{2p+2}_{L^{2p+2}}\le B\|\nabla\hat e(t)\|^p\eta(t)\|\hat e(t)\|^2.$$
Plugging the above estimate in \eqref{nlp2} and using \eqref{graderror4} and \eqref{estgrad} we deduce
\begin{equation*}
\begin{aligned}
|\Rea\ii\langle f(u)-f(\hat U),\hat e\rangle(t)|\le&\frac A{|\la|}\left(\eta(t)\left(G(u_0)^p+\|\nabla\hat U(t)\|^p\right)+\|\hat U(t)\|^{2p}_{L^\infty}\right)\|\hat e(t)\|
+\|\hat r(t)\|, \quad t\in I_n.
\end{aligned}
\end{equation*}
Hence the error equation \eqref{error2} takes now the form
$$\frac d{dt}\|\hat e(t)\|\le \mathcal{K}(\hat U,u_0;t)\|\hat e(t)\|+\|\hat r(t)\|, \quad t\in I_n,$$
with
\begin{equation}
\label{recfunctional2}
\mathcal{K}(\hat U, u_0; t):= A\left(\eta(t)\left(G(u_0)^p+\|\nabla\hat U(t)\|^p\right)+\|\hat U(t)\|^{2p}_{L^\infty}\right).
\end{equation}
Thus Gronwall's inequality leads to:
\begin{theorem}[improved a posteriori error estimate in $L^\infty(L^2)$ for $d=1$]\label{impth}
In the case $d=1$, the following estimates hold:
\begin{equation*}
\sup_{t\in I_n}\|(u-\hat U)(t)\|\le\exp\left(\int_{I_n}\mathcal{K}(\hat U, u_0;t)\,dt\right)\left(\|(u-\hat U)(t_n)\|+\int_{I_n}\|\hat r(t)\|\,dt\right),\ 0\le n\le N-1,
\end{equation*}
and
\begin{equation}
\label{imprestgl}
\|(u-\hat U)(t)\|\le\exp\left(\int_0^t\mathcal{K}(\hat U, u_0;s)\,ds\right)\int_0^t\|\hat r(s)\|\,ds,
\end{equation}
where $u$ is the solution of \eqref{NLS}, $\hat U$ is the reconstruction \eqref{CNrelaxrec2} for the relaxation Crank-Nicolson-type scheme \eqref{CNrelax}, $\hat r$ denotes the residual \eqref{residual2} and $\mathcal{K}(\hat U, u_0;s)$ is given by \eqref{recfunctional2}.\qed
\end{theorem}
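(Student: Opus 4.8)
The plan is to exploit the one-dimensional coincidence $pd=p(2-d)=p$ in the Gagliardo--Nirenberg bound \eqref{errorest1}, combined with a bootstrap on the coarser global estimate \eqref{globalest1} already furnished by Theorem~\ref{apostdiscth}. First I would specialise \eqref{errorest1} to $d=1$, where it reads $\|\hat e(t)\|^{2p+2}_{L^{2p+2}}\le B\|\nabla\hat e(t)\|^{p}\|\hat e(t)\|^{p}\|\hat e(t)\|^2$; the decisive observation is that exactly one surplus factor $\|\hat e(t)\|^{p}$ now appears, and this factor can be controlled \emph{a posteriori} rather than by the triangle-inequality split \eqref{errorest2} used in the general case. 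Setting $\eta(t)$ to be the right-hand side of \eqref{globalest1} raised to the power $p$, the global estimate gives $\|\hat e(t)\|^{p}\le\eta(t)$, whence $\|\hat e(t)\|^{2p+2}_{L^{2p+2}}\le B\|\nabla\hat e(t)\|^{p}\eta(t)\|\hat e(t)\|^2$.

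Next I would insert this into the nonlinear-term estimate \eqref{nlp2} and dispose of the gradient factor $\|\nabla\hat e(t)\|^{p}$ exactly as in the general argument: apply \eqref{graderror4} (with $d=1$) and the a posteriori gradient bound \eqref{estgrad} to replace $\|\nabla\hat e(t)\|^{p}$ by $\gamma(p)\big(\|\nabla\hat U(t)\|^{p}+G(u_0)^{p}\big)$. Collecting the constants into $A$, this produces the clean bound $|\Rea\ii\langle f(u)-f(\hat U),\hat e\rangle(t)|\le |\la|^{-1}\mathcal{K}(\hat U,u_0;t)\|\hat e(t)\|^2$ with $\mathcal{K}$ as in \eqref{recfunctional2}. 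Substituting back into the energy identity \eqref{error2} and bounding the residual term by Cauchy--Schwarz, then dividing through by $\|\hat e(t)\|$, yields the scalar differential inequality $\tfrac{d}{dt}\|\hat e(t)\|\le\mathcal{K}(\hat U,u_0;t)\|\hat e(t)\|+\|\hat r(t)\|$ on each $I_n$.

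Finally, since $\hat e=u-\hat U$ is continuous in time with $\hat e(0)=0$, I would integrate this inequality by Gronwall's lemma, globally on $[0,t]$ to obtain \eqref{imprestgl} and locally on $I_n$ starting from $\|\hat e(t_n)\|$ to obtain the first displayed estimate. The step I expect to require the most care is the bootstrap itself: one must check that there is no circularity, i.e.\ that $\eta(t)$ is built solely from the \emph{previously proven} bound \eqref{globalest1} and is not the quantity currently being estimated. The payoff, and the reason this is an \emph{improvement}, is that $\eta(t)$ is of the order of the (small) global error raised to the power $p$, so it multiplies only the gradient-dependent part of the exponent; consequently $\int\mathcal{K}$ is essentially dominated by $A\int\|\hat U(t)\|^{2p}_{L^\infty}\,dt$, replacing the $O(1)$ coefficient $\gamma(p)(\|u_0\|^{p}+\|\hat U\|^{p})$ present in $\Hf$ by a vanishing one. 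The restriction to $d=1$ is essential here: only then do the two Gagliardo--Nirenberg exponents coincide, so that a single power of $\|\hat e\|$ remains to be absorbed by $\eta$.
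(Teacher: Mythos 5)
Your proposal is correct and follows essentially the same route as the paper: bootstrap the already-proven global bound \eqref{globalest1} to replace the surplus factor $\|\hat e(t)\|^{p}$ (which in $d=1$ is the single power left over in \eqref{errorest1}) by $\eta(t)$, treat the gradient factor via \eqref{graderror4} and \eqref{estgrad}, and conclude with Gronwall. Your remarks on non-circularity and on why this improves the exponent match the paper's Remark on the improved estimate.
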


\begin{remark}[why is \eqref{imprestgl} an  improved estimate?]\upshape\label{impest}
Note that since $\eta(t)=\mathcal{O}(k^{2p}),$ $t\in[0,T],$ we have that 
$$\int_0^t \mathcal{K}(\hat U, u_0;s)\,ds=\mathcal{O}(k^{2p})+A\int_0^t\|\hat U(s)\|^{2p}_{L^\infty}\,ds.$$
From this point of view, \eqref{imprestgl} is an improved estimate compared to \eqref{globalest1}.
\end{remark}
%
%
\section{Fully discrete schemes}\label{fullydiscr}
In this section we study fully discrete schemes.  A Galerkin-type finite element method is used for the spatial discretization 
while for the discretization in time we use the relaxation Crank-Nicolson-type method \eqref{CNrelax}. We begin the section with some notation and the introduction of the fully discrete scheme.
%
\subsection{Notation \& The method}
We keep the same notation as in time discretization section for the partition of $[0,T]$, i.e., we denote by $k_n$ and $I_n$, $0\le n\le N-1$, the variable time steps and subintervals, respectively. For the spatial discretization, we follow the same notation as in \cite{KK}. More precisely, we consider a family of conforming, shape regular triangulations $\{\T_n\}_{n=0}^N$ of $\Om$ (for $d=1$, the elements of $\T_n$ are just finite intervals). We additionally assume that each triangulation $\T_n,\, 1\le n\le N,$ is a refinement of a macro-triangulation of $\Om$ and that every two consecutive triangulations $\T_n$ and $\T_{n+1}$, $0\le n\le N-1$, are compatible. We refer to \cite{DLM,LM} for precise definitions on these properties. 

For an element $K\in\T_n$, we denote its boundary by $\p K$ and by $h_K$ its diameter. Let also $\underline{h}_n:=\min_{K\in\T_n}h_K.$ By $h_n$ we denote the local mesh-size function on $\T_n$, defined as 
$$h_n(x):=h_{K}\ \text{ for } K\in\T_n\text{ and } x\in K.$$
 
 Let  $\varSigma_n(K)$ be the set of internal sides of $K\in\T_n$ (points for $d=1$, edges for $d=2$) and define $\varSigma_n:=\bigcup_{K\in\T_n}\varSigma_n(K).$ To any side $e\in\varSigma_n$, we associate a unit vector $\bm{n}_e$ on $e$ and for $x\in e$ and a function $v$, we define 
$$J[\nabla v](x):=\lim_{\delta\to 0}\Big[\nabla v(x+\delta\bm{n}_e)-\nabla v(x-\delta\bm{n}_e)\Big]\cdot\bm{n}_e.$$

To each triangulation $\T_n,$ we associate the finite element space $\V^n$,
$$\V^n:=\{V_n\in H_0^1(\Om):\forall K\in\T_n,\, V_n|_K\in\mathbb{P}^r\},$$
where $\mathbb{P}^r$ denotes the space of polynomials in $d$ variables of degree at most $r$.

With $\widehat{\T}_{n+1}:=\T_{n+1}\wedge\T_n$ we denote the finest common coarsening triangulation of $\T_{n+1}$ and $\T_n$, by $\widehat{h}_{n+1}$ its local mesh-size function and by $\widehat{\V}^{n+1}:=\V^{n+1}\bigcap\V^n$ its corresponding finite element space. Finally, let  $\check{\varSigma}_{n+1}:=\varSigma_{n+1}\bigcup\varSigma_{n},$ and for
$K\in\widehat{\T}_{n+1},$ let $\check{\varSigma}_K^{n+1}:=\check{\varSigma}_{n+1}\bigcap
K,$ where the element $K\in\widehat{\T}_{n+1}$ is taken to be closed.

To introduce a fully discrete method, we will also need the definitions of the $L^2-$projection and of the discrete laplacian onto $\V^n$.
To this end, the $L^2-$projection $\Pro^n:L^2\to\V^n$ is defined as 
$$\langle\Pro^n v, V_n\rangle=\langle v,V_n\rangle,\quad\forall V_n\in\V^n,$$
and every $v\in L^2(\Om)$. Moreover, the discrete laplacian $-\D^n: H_0^1(\Om)\to\V^n$ is defined as
\begin{equation}
\label{discrlapl}
\langle-\D^n v, V_n\rangle=\langle\nabla v,\nabla V_n\rangle,\quad\forall V_n\in\V^n,
\end{equation}
and every $v\in H_0^1(\Om).$

With the notation and definitions so far, we can now define the modified relaxation Crank-Nicolson-Galerkin-type fully discrete scheme. For $0\le n\le N,$ we seek approximations $U^n\in\V^n$ to $u(t_n)$ such that, for $0\le n\le N-1$,
\begin{equation}
\label{CNrelaxfull} \left \{
\begin{aligned}
& \frac{k_{n-1}}{k_n+k_{n-1}}\Phi^{n+\frac12}+\frac{k_n}{k_n+k_{n-1}}\Pro^{n+1}\Phi^{n-\frac12}=\Pro^{n+1}\left(|U^n|^{2p}\right),    \\
&\frac{U^{n+1}-\Pro^{n+1}U^n}{k_n}-\ii\alpha\frac{\D^{n+1}U^{n+1}+\Pro^{n+1}\D^nU^n}2=\ii\lambda\Pro^{n+1}\left(\Phi^{n+\frac12}U^{n+\frac12}\right),
\end{aligned}
\right.
\end{equation}
with $\Phi^{-\frac12}=\Pro^0\left(|u_0|^{2p}\right)$, $U^0=\Pro^0 u_0.$ Scheme \eqref{CNrelaxfull} is not the standard finite element scheme for \eqref{CNrelax}. The above  modified scheme was introduced by B\"{a}nsch,  Karakatsani \& Makridakis in \cite{BKM} for the a posteriori analysis of the heat equation with mesh change. Its main advantage is that it avoids the existence of the term $\|(\D^{n+1}-\D^n)U^n\|$ in the a posteriori error analysis, which oscillates when there is mesh change. This schemes was also used recently in \cite{KK} for linear Schr\"odinger equations. 
%

\subsection{Space reconstruction}
The main tool leading to a posteriori error estimates of optimal order in space in the $L^\infty(L^2)-$norm via energy techniques is the elliptic reconstruction. It was originally introduced by Makridakis \& Nochetto in \cite{MN1} for  finite element semidiscrete schemes for the heat equation.

For $V_n\in\V^n$, the \emph{elliptic reconstruction} $\R^n V_n\in H_0^1(\Om)$ of $V_n$ is defined to be the unique solution of the elliptic equation
\begin{equation}
\label{ellrec}
\langle\nabla\R^n V_n,\nabla v\rangle=\langle-\D^nV_n,v\rangle,\quad \forall v\in H_0^1(\Om).
\end{equation}

Using the elliptic reconstruction, we define the \emph{space reconstruction} $\omega: [0,T]\in H_0^1(\Om)$ of the piecewise linear interpolated $U$ (see \eqref{linearint}) as 
\begin{equation}
\label{spacerec}
\omega(t):=\ell_0^n(t)\R^nU^n+\ell_1^n(t)\R^{n+1}U^{n+1},\quad t\in I_n.
\end{equation}
The space reconstruction \eqref{spacerec} will allow us to handle efficiently a posteriori the spatial error using the elliptic theory. More precisely, using $\omega$ in the analysis below, terms of the form $\|(\R^n-\II)V_n\|,$ $\|(\R^n-\II)V_n\|_{L^\infty}$ and $\|(\R^{n+1}-\II)V_{n+1}-(\R^n-\II)V_n\|$ will appear. To estimate these terms, we will use residual-type elliptic error estimators.

To this end, for a given $V_n\in\V^n$, $0\le n\le N$, we define the following $L^2-$ and $L^\infty-$residual-type estimators:
\begin{equation}
\label{l2ell1}
\begin{aligned}
\h(V_n):=\bigg\{\sum_{K\in\T_n}\Big(\|h_K^2(\D-\D^n)V_n\|^2_{L^2(K)}+\|h_K^{\frac  32}J[\nabla V_n]\|^2_{L^2(\p K)}\Big)\bigg\}^{\frac 12},
\end{aligned}
\end{equation}
and 
\begin{equation}
\label{linfell}
\begin{aligned}
\hin(V_n):=\max_{K\in\T_n}\bigg\{\|h_K^2(\D-\D^n)V_n\|_{L^\infty(K)}+\|h_KJ[\nabla V_n]\|_{L^\infty(\p K)}\bigg\},
\end{aligned}
\end{equation}
where, for $p=1,\infty$, $\|\cdot\|_{L^p(K)}$ and $\|\cdot\|_{L^p(\p K)}$ denote the $L^p-$norm in $K$ and on $\p K$, respectively. In the one-dimensional case, $d=1,$ the term with the discontinuities in \eqref{l2ell1} and \eqref{linfell} vanishes. For $V^{n+1}\in\V_{n+1}$ and $V_n\in\V^n$, $0\le n\le N$, we also define
\begin{equation}
\label{l2ell2}
\begin{aligned}
\het(V_{n+1},V_n):=\bigg\{\sum_{K\in\widehat\T_n}\Big(\|h_K^2\big[(\D-\D^{n+1})V_{n+1} & -(\D-\D^n)V_n\big]\|^2_{L^2(K)} 
\\*[-15pt] &+\| h_{K}^{\frac 32}J[\nabla V_{n+1}-\nabla V_n]\|^2_{L^2(\check{\varSigma}_K^{n+1})}\Big)\bigg\}^{\frac 12}.
\end{aligned}
\end{equation}

In view of the definitions \eqref{l2ell1} and \eqref{l2ell2}, the next lemma is standard. For its proof we refer, for example, in \cite{MN1,LM} (for the case $d=1$, see \cite{PhDKyza}).
\begin{lemma}
For all $V_n\in\V^n,\, 0\le n\le N,$ we have
\begin{equation}
\label{resest1}
\|(\R^n-\II)V_n\|\le C_2\h(V_n),
\end{equation}
where $C_2$ depends only on $\Om$ and the shape regularity of the family of triangulations $\{\T_n\}_{n=0}^N.$ Furthermore, for all $V_{n+1}\in\V^{n+1}$ and $V_n\in\V^n$, $0\le n\le N-1$, it holds
\begin{equation}
\label{resest2}
\|(\R^{n+1}-\II)V_{n+1}-(\R^n-\II)V_n\|\le\widehat{C}_2\het(V_{n+1},V_n),
\end{equation}
where $\widehat{C}_2$ depends only on $\Om$, the shape regularity of the triangulations, and the number of refinement steps necessary to pass from $\T_n$ to $\T_{n+1}.$ \qed 
\end{lemma}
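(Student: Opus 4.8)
The plan is to prove the two residual-type bounds \eqref{resest1} and \eqref{resest2} by reducing each left-hand side to a standard a posteriori estimate for an elliptic problem and then quoting classical residual estimators. First I would observe that by the definition \eqref{ellrec} of the elliptic reconstruction and the definition \eqref{discrlapl} of the discrete laplacian, the difference $w := (\R^n - \II)V_n = \R^n V_n - V_n$ satisfies, for all $v \in H_0^1(\Om)$,
\begin{equation*}
\langle \nabla w, \nabla v\rangle = \langle \nabla \R^n V_n, \nabla v\rangle - \langle \nabla V_n, \nabla v\rangle = \langle -\D^n V_n, v\rangle - \langle \nabla V_n, \nabla v\rangle.
\end{equation*}
This is precisely the weak form of an elliptic problem $-\D w = -\D^n V_n - (-\D V_n)$ in the distributional sense, i.e.\ $w$ is the Galerkin error $\R^n V_n - V_n$ for the Poisson problem whose exact solution is $\R^n V_n$ and whose discrete solution is $V_n$. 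Hence $\|w\|$ is controlled by the $L^2$ a posteriori estimator for piecewise-polynomial approximations, whose element residual is $(\D - \D^n)V_n$ and whose jump residual across interior sides is $J[\nabla V_n]$.

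The key mechanism is an Aubin--Nitsche duality argument to obtain the $L^2$ rather than $H^1$ bound: introduce the dual problem $-\D \psi = w$ in $\Om$ with $\psi = 0$ on $\p\Om$, use $H^2$-regularity on the convex polygonal domain to get $\|\psi\|_{H^2} \le C\|w\|$, and then write $\|w\|^2 = \langle \nabla w, \nabla \psi\rangle$. Inserting the Galerkin orthogonality (the interpolant $I_h \psi \in \V^n$) and integrating by parts elementwise produces exactly the element and jump residuals appearing in \eqref{l2ell1}; the factors $h_K^2$ and $h_K^{3/2}$ arise from the standard interpolation estimates $\|\psi - I_h\psi\|_{L^2(K)} \lesssim h_K^2 \|\psi\|_{H^2(K)}$ and the corresponding trace bound on $\p K$. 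The constant $C_2$ then depends only on $\Om$ (through the regularity constant) and the shape regularity of the family $\{\T_n\}$. This yields \eqref{resest1}.

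For \eqref{resest2} I would apply the same reduction to the combined difference. Setting $z := (\R^{n+1}-\II)V_{n+1} - (\R^n - \II)V_n$, subtraction of the two defining relations gives, for all $v \in H_0^1(\Om)$,
\begin{equation*}
\langle \nabla z, \nabla v\rangle = \langle -\D^{n+1}V_{n+1} + \D^n V_n, v\rangle - \langle \nabla(V_{n+1} - V_n), \nabla v\rangle,
\end{equation*}
so $z$ is again an elliptic Galerkin error, now with element residual $(\D - \D^{n+1})V_{n+1} - (\D - \D^n)V_n$ and jump residual $J[\nabla V_{n+1} - \nabla V_n]$. Running the same duality argument, but now measuring residuals on the finest common coarsening $\widehat{\T}_{n+1}$ and over the combined side set $\check\varSigma^{n+1}_K$ (so that both $V_n$ and $V_{n+1}$ are piecewise polynomial on each element of $\widehat{\T}_{n+1}$), reproduces exactly the estimator $\het(V_{n+1},V_n)$ of \eqref{l2ell2}, with $\widehat{C}_2$ picking up an additional dependence on the number of refinement steps linking $\T_n$ to $\T_{n+1}$ because the interpolation and inverse estimates must be transferred across the two meshes via their common coarsening.

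The main obstacle is technical rather than conceptual: one must verify that $V_n$, which is only piecewise polynomial on $\T_n$, together with $V_{n+1}$ on $\T_{n+1}$, can be simultaneously treated as a discrete function on the common coarsening so that the elementwise integration by parts is legitimate and every interior jump is captured by $\check\varSigma^{n+1}_K$. Because this localization and the associated interpolation/trace estimates are entirely standard, I would not reproduce them here but instead cite \cite{MN1,LM} (and \cite{PhDKyza} for $d=1$, where the jump terms drop out), which is exactly what the statement indicates.
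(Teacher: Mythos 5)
Your argument is correct and follows the same route as the paper, which itself gives no proof but simply observes that the lemma is standard and cites \cite{MN1,LM} (and \cite{PhDKyza} for $d=1$) — exactly the reduction to an elliptic Galerkin error plus Aubin--Nitsche duality that you outline, and exactly the references you invoke. Your identification of Galerkin orthogonality from \eqref{ellrec} and \eqref{discrlapl}, and of the element and jump residuals matching \eqref{l2ell1} and \eqref{l2ell2}, is the content of those cited proofs.
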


A similar estimate to \eqref{resest1} holds for $\|(\R^n-\II)V_n\|_{L^\infty}$; for its proof we refer to \cite{NSSV} (for $d=1,$ see \cite{PhDKyza}).
\begin{lemma}
For every $V_n\in\V^n$, $0\le n\le N,$ the following estimate is valid:
\begin{equation}
\label{resest3}
\|(\R^n-\II)V_n\|_{L^\infty}\le C_\infty\left(\ln\underline h_n\right)^2\hin(V_n),
\end{equation}
where $C_\infty$ depends only on $\Om$ and the shape regularity of the family of triangulations. \qed
\end{lemma}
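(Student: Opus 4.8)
The plan is to recognize that $V_n$ is exactly the Ritz--Galerkin finite element approximation of its own elliptic reconstruction $w:=\R^nV_n$, and then to invoke the known pointwise (maximum-norm) a posteriori error estimate for the Poisson problem.

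First I would establish Galerkin orthogonality. By the definition \eqref{discrlapl} of the discrete laplacian, $\langle-\D^nV_n,V\rangle=\langle\nabla V_n,\nabla V\rangle$ for every $V\in\V^n$. Comparing this with the defining relation \eqref{ellrec} of the elliptic reconstruction, restricted to test functions $v=V\in\V^n\subset H_0^1(\Om)$, yields $\langle\nabla(\R^nV_n-V_n),\nabla V\rangle=0$ for all $V\in\V^n$. Consequently $V_n$ is the continuous piecewise-polynomial Ritz projection onto $\V^n$ of the solution $w=\R^nV_n$ of the elliptic problem $-\D w=-\D^nV_n$ in $\Om$, $w=0$ on $\p\Om$, whose source $-\D^nV_n$ lies in $\V^n$. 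Thus $(\R^n-\II)V_n=w-V_n$ is precisely the finite element error for this Poisson problem, and the task reduces to a standard pointwise a posteriori bound.

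Second, I would control $\|w-V_n\|_{L^\infty}$ by the regularized-Green's-function (weighted duality) argument of the maximum-norm a posteriori theory. One picks $x_0\in\Om$ where $|w-V_n|$ attains its maximum, represents $(w-V_n)(x_0)$ through the pairing with a regularized Green's function $g_{x_0}$, and uses the Galerkin orthogonality above to subtract a finite element interpolant of $g_{x_0}$. Integrating by parts elementwise then produces the interior residual $(\D-\D^n)V_n$ on each $K\in\T_n$ and the gradient-jump residual $J[\nabla V_n]$ across the internal sides, weighted by $h_K^2$ and $h_K$ respectively, which matches term-by-term the estimator $\hin(V_n)$ of \eqref{linfell}. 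The weighted $W^{1,1}$-type stability bounds on $g_{x_0}$ on shape-regular meshes supply the logarithmic factor $(\ln\underline h_n)^2$ and the constant $C_\infty$ depending only on $\Om$ and the shape regularity.

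The main obstacle is the delicate pointwise analysis itself---the construction of the regularized Green's function, its weighted stability estimates, and the careful tracking of the logarithmic factor. Since this is exactly the content of the pointwise a posteriori theory, I would invoke it directly from \cite{NSSV} (and, for the one-dimensional case $d=1$, from \cite{PhDKyza}) rather than reproving it. What genuinely remains to verify is only the matching of the data: that the source $-\D^nV_n$ belongs to $\V^n$, that the elementwise interior residual equals $(\D-\D^n)V_n$, and that the jump terms coincide with those in $\hin(V_n)$, so that the cited estimate specializes to \eqref{resest3}.
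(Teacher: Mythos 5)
Your proposal is correct and follows essentially the same route as the paper, which proves this lemma simply by citing the pointwise a posteriori theory of \cite{NSSV} (and \cite{PhDKyza} for $d=1$). The Galerkin-orthogonality reduction you spell out --- that $V_n$ is the Ritz projection of $w=\R^nV_n$, so $(\R^n-\II)V_n$ is the finite element error of a Poisson problem whose interior residual is $(\D-\D^n)V_n$ and whose jump residual is $J[\nabla V_n]$ --- is exactly the standard bridge implicit in the paper's citation, so you have merely made explicit what the paper leaves to the reader.
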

%

\subsection{Time-space reconstruction and its properties}
%
As already discussed in Section~\ref{timediscr} to handle efficiently the time-error due to the discretization with the relaxation Crank-Nicolson scheme, we need  a reconstruction in time. Similarly, for the fully discrete scheme \eqref{CNrelaxfull} we will use a time reconstruction of $\omega$. Thus, we end up with a time-space reconstruction $\hat U:[0,T]\to H_0^1(\Om)$ which is defined as follows:
\begin{equation}
\label{fullyrec1}
\begin{aligned}
\hat U(t):=\R^n U^n&+\frac{t-t_n}{k_n}(\R^{n+1}\Pro^{n+1}U^n-\R^nU^n)+\ii\alpha\int_{t_n}^t\R^{n+1}\Theta(s)\,ds\\&+\ii\lambda\int_{t_n}^t\R^{n+1}\Pro^{n+1}\left(\Phi^{n+\frac12}U(s)\right)\,ds,\quad t\in I_n,
\end{aligned}
\end{equation}
with 
\begin{equation}
\label{thetafunc}
\Theta(t):=\ell_0^n(t)\Pro^{n+1}\D^n U^n+\ell_1^n(t)\D^{n+1}U^{n+1},\quad t\in I_n;
\end{equation}
compare with \eqref{CNrelaxrec}. We choose to use the same notation $\hat U$ for the time-space reconstruction as for the time-reconstruction in time discrete schemes (cf., Sections~\ref{timediscr},~\ref{apostimdis}). This is done in an attempt to simplify the notation as well as for a direct comparison with the time discrete schemes. Note that $\hat U$ in \eqref{fullyrec1} coincides with the time-space reconstruction of \cite{KK} for the linear Schr\"odinger equation \eqref{linschr}.

Denoting by
\begin{equation}
\label{notation1}
W(t):=\ii\alpha\Theta(t)+\ii\lambda\Pro^{n+1}\left(\Phi^{n+\frac12}U(t)\right),\quad t\in I_n,
\end{equation}  
we have that $\hat U$ is equivalently written as
\begin{equation}
\label{fullyrec2}
\hat U(t)=\R^n U^n+\frac{t-t_n}{k_n}(\R^{n+1}\Pro^{n+1} U^n-\R^n U^n)+\int_{t_n}^t\R^{n+1}W(s)\,ds, \quad t\in I_n,
\end{equation}
whilst the second equation in method \eqref{CNrelaxfull} is written as
\begin{equation}
\label{CNrelaxfull2}
\frac{U^{n+1}-\Pro^{n+1}U^n}{k_n}-W(t_{n+\frac12})=0,\quad 0\le n\le N-1.
\end{equation}
For $0\le n\le N-1$, $t\in I_n$, we further define
\begin{equation}
\label{thetader}
\pt\Theta^{n+1}:=\p_t\Theta(t)=\frac{\D^{n+1}U^{n+1}-\Pro^{n+1}\D^nU^n}{k_n}
\end{equation}
and similarly,
\begin{equation}
\label{wder}
\pt W^{n+1}:=\p_t W(t)=\ii\alpha\pt\Theta^n+\ii\lambda\Pro^{n+1}\left(\Phi^{n+\frac12}\pa U^n\right).
\end{equation}
Using \eqref{fullyrec2}, \eqref{CNrelaxfull2} and notations \eqref{notation1}, \eqref{thetader}, \eqref{wder} we can prove the next propositions. We state them here without any proofs and we refer to \cite{KK} (Proposition~2.1 and Lemma~2.5) for further details (compare also with Propositions~\ref{proprec} and~\ref{diffprop} of this paper).
\begin{proposition}[properties of $\hat U$] 
The reconstruction $\hat U$ in \eqref{fullyrec1} satisfies $\hat U(t_n^+)=\R^n U^n$ and $\hat U(t_{n+1})=\R^{n+1}U^{n+1}$, $0\le n\le N-1$. In particular, $\hat U$ is continuous in time on $[0,T].$ Furthermore it satisfies
\begin{equation}
\label{recfullPDE}
\p_t\hat U-\ii\alpha\R^{n+1}\Theta=\ii\lambda\R^{n+1}\Pro^{n+1}\left(\Phi^{n+\frac12}U\right)+\frac{\R^{n+1}\Pro^{n+1}U^n-\R^nU^n}{k_n}\quad\text{ in } I_n. \qed
\end{equation}
\end{proposition}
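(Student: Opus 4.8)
`````latex
The plan is to verify the three asserted properties of the time-space reconstruction $\hat U$ directly from its definition \eqref{fullyrec1}, or rather from the equivalent compact form \eqref{fullyrec2}, exactly mirroring the proof of Proposition~\ref{proprec} for the time-discrete reconstruction. All three claims are essentially bookkeeping once one has the right representation, so the strategy is to set up \eqref{fullyrec2} and then read off each property by evaluating at endpoints and by differentiating.

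First I would establish the two endpoint (continuity) identities. Evaluating \eqref{fullyrec2} at $t=t_n^+$ kills both the linear-interpolation term and the integral term, leaving $\hat U(t_n^+)=\R^n U^n$ immediately. For the right endpoint I would set $t=t_{n+1}$ in \eqref{fullyrec2}: the middle term becomes $\R^{n+1}\Pro^{n+1}U^n-\R^nU^n$, and the integral $\int_{t_n}^{t_{n+1}}\R^{n+1}W(s)\,ds$ I would evaluate using the midpoint/trapezoidal identity that $\R^{n+1}W$ is affine in $t$ (since $\Theta$ is affine by \eqref{thetafunc} and $\Pro^{n+1}(\Phi^{n+\frac12}U(s))$ is affine because $U$ is the piecewise linear interpolant), so $\int_{t_n}^{t_{n+1}}\R^{n+1}W(s)\,ds=k_n\R^{n+1}W(t_{n+\frac12})$. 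Invoking \eqref{CNrelaxfull2} then gives $k_n\R^{n+1}W(t_{n+\frac12})=\R^{n+1}(U^{n+1}-\Pro^{n+1}U^n)$, and adding the middle term telescopes to $\hat U(t_{n+1})=\R^{n+1}U^{n+1}$. Since the left and right limits at each interior node agree ($\R^{n+1}U^{n+1}$ from $I_n$ matches $\hat U(t_{n+1}^+)$ from $I_{n+1}$), time-continuity on $[0,T]$ follows.

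For the differential identity \eqref{recfullPDE}, I would simply differentiate \eqref{fullyrec2} in $t$ on $I_n$: the constant term $\R^nU^n$ drops, the linear term contributes $\tfrac{1}{k_n}(\R^{n+1}\Pro^{n+1}U^n-\R^nU^n)$, and the fundamental theorem of calculus applied to the integral yields $\R^{n+1}W(t)$. Substituting the definition \eqref{notation1} of $W$ and rearranging so that the $\ii\alpha\R^{n+1}\Theta$ term sits on the left-hand side produces \eqref{recfullPDE} exactly.

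I do not expect a genuine obstacle here, since the proposition is stated without proof in the paper with a reference to \cite{KK}; the only point requiring mild care is the right-endpoint computation, where one must justify that $\R^{n+1}W$ is affine in time so that the integral collapses to a midpoint value, and then correctly chain together the scheme relation \eqref{CNrelaxfull2} with the elliptic-reconstruction bookkeeping. The affineness is immediate from \eqref{thetafunc} and \eqref{linearint} together with the linearity of $\R^{n+1}$, $\Pro^{n+1}$ and multiplication by the time-independent $\Phi^{n+\frac12}$, so even this step is routine. Everything else is linear algebra on the reconstruction formula, completely parallel to Propositions~\ref{proprec} and~\ref{diffprop}.
`````
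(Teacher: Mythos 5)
Your argument is correct and is exactly the computation the paper intends: the paper omits the proof, referring to \cite{KK} and to the parallel Propositions~\ref{proprec} and~\ref{diffprop}, and your verification (endpoint evaluation of \eqref{fullyrec2}, the midpoint identity for the affine-in-time $W$ combined with \eqref{CNrelaxfull2}, and direct differentiation) is precisely that parallel argument. The one point needing care --- that $W(t)\in\V^{n+1}$ is affine on $I_n$ so the integral collapses to $k_n\R^{n+1}W(t_{n+\frac12})$ --- is handled correctly.
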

\begin{proposition}[the difference $\hat U-\omega$]
The difference $\hat U-\omega$ can be expressed as 
\begin{equation}
\label{diffully}
\begin{aligned}
(\hat U-\omega)(t)&=-\frac12(t_{n+1}-t)(t-t_n)\R^{n+1}\pt W^{n+1}
\\&=-\frac\ii2(t_{n+1}-t)(t-t_n)\R^{n+1}\left(\alpha\pt\Theta^{n+1}+\lambda\Pro^{n+1}\left(\Phi^{n+\frac12}\pa U^n\right)\right)\quad t\in I_n. \qed
\end{aligned}
\end{equation}
\end{proposition}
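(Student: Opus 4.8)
The plan is to exploit that, on each subinterval $I_n$, both $\hat U$ and $\omega$ are polynomials in $t$ of low degree, so that their difference is completely determined by its values at the two endpoints together with its (constant) second time-derivative. From \eqref{spacerec}, $\omega$ is affine in $t$ on $I_n$; from \eqref{fullyrec2}, $\hat U$ is the sum of an affine term and the primitive $\int_{t_n}^t\R^{n+1}W(s)\,ds$ of the map $s\mapsto\R^{n+1}W(s)$, which is itself affine because $\Theta$ in \eqref{thetafunc} and $U$ in \eqref{linearint} are both affine, so $W$ in \eqref{notation1} is affine. Hence $\hat U-\omega$ is a quadratic polynomial in $t$ with values in $H_0^1(\Om)$.

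First I would check that $\hat U-\omega$ vanishes at both endpoints of $I_n$. At $t=t_n$ the integral and the linear correction in \eqref{fullyrec2} drop out, giving $\hat U(t_n^+)=\R^nU^n$, while $\ell_0^n(t_n)=1$ and $\ell_1^n(t_n)=0$ give $\omega(t_n)=\R^nU^n$; thus $(\hat U-\omega)(t_n)=0$. At $t=t_{n+1}$ I would invoke the endpoint identity $\hat U(t_{n+1})=\R^{n+1}U^{n+1}$ from the preceding proposition (which is where the scheme relation \eqref{CNrelaxfull2} enters: integrating the affine $W$ exactly over $I_n$ gives $\int_{t_n}^{t_{n+1}}W=k_nW(\tha)=U^{n+1}-\Pro^{n+1}U^n$), together with $\omega(t_{n+1})=\R^{n+1}U^{n+1}$; hence $(\hat U-\omega)(t_{n+1})=0$.

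Next I would compute the second time-derivative. Since $\omega$ is affine, $\p_t^2(\hat U-\omega)=\p_t^2\hat U=\p_t\big(\R^{n+1}W(t)\big)=\R^{n+1}\pt W^{n+1}$, which is constant on $I_n$ by \eqref{wder}. A quadratic polynomial $q(t)$ with $q(t_n)=q(t_{n+1})=0$ and $q''\equiv C$ is necessarily $q(t)=\tfrac12 C(t-t_n)(t-t_{n+1})=-\tfrac12 C(t-t_n)(t_{n+1}-t)$; applying this with $C=\R^{n+1}\pt W^{n+1}$ yields the first equality in \eqref{diffully}. Substituting the explicit form of $\pt W^{n+1}$ from \eqref{wder} then gives the second equality.

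The only delicate point is the endpoint matching at $t_{n+1}$, which is not purely formal: it relies on the exactness of the midpoint evaluation $\int_{t_n}^{t_{n+1}}\R^{n+1}W=k_n\R^{n+1}W(\tha)$ for affine $W$ combined with the fully discrete scheme \eqref{CNrelaxfull2}. As this is already recorded in the preceding proposition, the remaining work is the routine quadratic-interpolation argument above. Alternatively, one can avoid the interpolation shortcut and compute $\hat U-\omega$ directly by writing $W(s)=W(\tha)+(s-t_{n+\frac12})\pt W^{n+1}$, evaluating the primitive in closed form, and simplifying with \eqref{CNrelaxfull2}; this reproduces \eqref{diffully} after elementary algebra but is more tedious.
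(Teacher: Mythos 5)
Your proof is correct. The paper itself states this proposition without proof, deferring to the reference \cite{KK} (Proposition~2.1 and Lemma~2.5); the analogous time-discrete statement, Proposition~\ref{diffprop}, is proved by directly subtracting the two explicit formulas and simplifying with the scheme. Your route is a mild but genuine repackaging of that computation: since $\Theta$ and $U$ are affine on $I_n$, so is $W$, hence $\hat U-\omega$ is an $H_0^1(\Om)$-valued quadratic in $t$, and you pin it down by its two endpoint zeros together with its constant second derivative $\R^{n+1}\pt W^{n+1}$. The substantive ingredients are the same in either version --- exactness of the midpoint rule for affine integrands, the scheme relation \eqref{CNrelaxfull2} to get $\hat U(t_{n+1})=\R^{n+1}U^{n+1}$, and the identification of $\p_t W$ via \eqref{wder} --- but the interpolation argument replaces the ``elementary algebra'' of expanding $\int_{t_n}^{t}\R^{n+1}W(s)\,ds$ in closed form with the one-line observation that a quadratic $q$ with $q(t_n)=q(t_{n+1})=0$ and $q''\equiv C$ equals $-\tfrac12 C(t_{n+1}-t)(t-t_n)$, which makes the precise shape of the prefactor transparent. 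Both endpoint checks and the second-derivative computation in your write-up are accurate (the affine correction term in \eqref{fullyrec2} has vanishing second derivative, and $\R^{n+1}$ commutes with $\p_t$ as a fixed linear operator), so nothing is missing.
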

%

\section{A posteriori error control for fully discrete schemes}\label{FDapost}
We are now ready to prove the main a posteriori error bound for the fully discrete scheme \eqref{CNrelaxfull}. 
We split the error $e:=u-U$ as 
$$e=u-U:=\hat\rho+\sigma+\epsilon\ \text{ with }\ \hat\rho:=u-\hat U,\ \sigma:=\hat U-\omega \ \text{ and }\ \epsilon:=\omega-U,$$
and we refer to $\hat\rho$ as the main error, to $\sigma$ as the time-reconstruction error, and to $\epsilon$ as the elliptic error.
%

\subsection{Estimation of $\sigma$ \& $\epsilon$}
In this subsection we give two simple propositions for the a posteriori estimation of the time-reconstruction and elliptic errors.
\begin{proposition}[estimation of $\sigma$]
For $0\le n\le N-1$, the following estimates hold:
\begin{equation}
\label{trestloc}
\max_{t_n\le t\le t_{n+1}}\|\sigma(t)\|\le\ep_{n+1}^{\TT,0}\quad\text{ with }\quad \ep_{n+1}^{\TT,0}:=\frac{k_{n}^2}8\left[\|\pt W^{n+1}\|+C_2\hh(\pt W^{n+1})\right]\quad \text{ and }
\end{equation}
%
%
\begin{equation}
\label{trestinftloc}
\max_{t_n\le t\le t_{n+1}}\|\sigma(t)\|_{L^\infty}\le\ep_{n+1}^{\TT,\infty}\ \text{ with }\ \ep_{n+1}^{\TT,\infty}:=\frac{k_{n}^2}8\left[\|\pt W^{n+1}\|_{L^\infty}+C_\infty\left(\ln\underline h_{n+1}\right)^2\hhin(\pt W^{n+1})\right].
\end{equation}
In particular, for $1\le m\le N,$ we have that
\begin{equation}
\label{trest}
\max_{0\le t\le t_m}\|\sigma(t)\|\le\E_m^{\TT,0}\quad \text{ with }\quad \E_m^{\TT,0}:=\max_{1\le n\le m}\ep_n^{\TT,0}\quad \text{ and }
\end{equation}
%
\begin{equation}
\label{trestinf}
\max_{0\le t\le t_m}\|\sigma(t)\|_{L^\infty}\le\E_m^{\TT,\infty}\ \text{ with }\ \E_m^{\TT,\infty}:=\max_{1\le n\le m}\ep_n^{\TT,\infty}.
\end{equation}
\end{proposition}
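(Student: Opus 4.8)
The plan is to prove the local estimate \eqref{trestloc} first and then obtain the global estimate \eqref{trest} by a trivial maximization over subintervals. The starting point is the explicit expression for the difference $\hat U-\omega$ given in \eqref{diffully}, namely
\[
\sigma(t)=(\hat U-\omega)(t)=-\frac12(t_{n+1}-t)(t-t_n)\R^{n+1}\pt W^{n+1},\quad t\in I_n.
\]
First I would take $L^2$-norms on both sides. The scalar factor $(t_{n+1}-t)(t-t_n)$ is a nonnegative parabola on $I_n$ whose maximum over $I_n$ is attained at the midpoint $t_{n+\frac12}$ and equals $k_n^2/4$; hence $\tfrac12(t_{n+1}-t)(t-t_n)\le k_n^2/8$ uniformly in $t\in I_n$. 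This immediately yields
\[
\max_{t_n\le t\le t_{n+1}}\|\sigma(t)\|\le\frac{k_n^2}8\,\|\R^{n+1}\pt W^{n+1}\|.
\]

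The main step is then to bound $\|\R^{n+1}\pt W^{n+1}\|$ by the computable quantity $\|\pt W^{n+1}\|+C_2\hh(\pt W^{n+1})$. The natural device is the triangle inequality applied to the splitting $\R^{n+1}=\II+(\R^{n+1}-\II)$, giving
\[
\|\R^{n+1}\pt W^{n+1}\|\le\|\pt W^{n+1}\|+\|(\R^{n+1}-\II)\pt W^{n+1}\|.
\]
The second term is exactly of the form controlled by the elliptic residual estimator: applying \eqref{resest1} with $n$ replaced by $n+1$ and $V_{n+1}=\pt W^{n+1}$ bounds it by $C_2\hh(\pt W^{n+1})$. Combining the two displays produces precisely $\ep_{n+1}^{\TT,0}$. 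The $L^\infty$ estimate \eqref{trestinftloc} follows by the identical argument, replacing the $L^2$-norm by the $L^\infty$-norm and invoking the $L^\infty$ residual bound \eqref{resest3} in place of \eqref{resest1}, which is responsible for the extra logarithmic factor $(\ln\underline h_{n+1})^2$.

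The one caveat worth checking is that $\pt W^{n+1}$ genuinely lies in the finite element space $\V^{n+1}$, so that \eqref{resest1} and \eqref{resest3} are applicable. This is the case: by \eqref{wder}, $\pt W^{n+1}=\ii\alpha\,\pt\Theta^{n+1}+\ii\lambda\,\Pro^{n+1}(\Phi^{n+\frac12}\pa U^n)$, and from \eqref{thetader} the term $\pt\Theta^{n+1}=(\D^{n+1}U^{n+1}-\Pro^{n+1}\D^nU^n)/k_n$ is a combination of discrete Laplacians and $L^2$-projections onto $\V^{n+1}$, while the second term is explicitly a projection onto $\V^{n+1}$; hence $\pt W^{n+1}\in\V^{n+1}$. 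Finally, passing from the local bounds to the global ones \eqref{trest} and \eqref{trestinf} is immediate: for $1\le m\le N$ the interval $[0,t_m]$ is the union of $I_0,\dots,I_{m-1}$, so
\[
\max_{0\le t\le t_m}\|\sigma(t)\|=\max_{0\le n\le m-1}\ \max_{t_n\le t\le t_{n+1}}\|\sigma(t)\|\le\max_{1\le n\le m}\ep_n^{\TT,0}=\E_m^{\TT,0},
\]
and analogously for the $L^\infty$ estimate. I do not expect any real obstacle here; the only mild subtlety is the index shift (the estimators carry the subscript $n+1$ because the elliptic reconstruction acts through $\R^{n+1}$), which must be tracked carefully so that the maxima in \eqref{trest}–\eqref{trestinf} run over the correct range $1\le n\le m$.
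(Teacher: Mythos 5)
Your proof is correct and follows essentially the same route as the paper: the paper's (one-line) proof likewise starts from \eqref{diffully}, writes $\R^{n+1}\pt W^{n+1}=\pt W^{n+1}+(\R^{n+1}-\II)\pt W^{n+1}$, and invokes the elliptic residual estimates. Your version merely spells out the uniform bound $\tfrac12(t_{n+1}-t)(t-t_n)\le k_n^2/8$ and the membership $\pt W^{n+1}\in\V^{n+1}$, both of which the paper leaves implicit.
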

\begin{proof}
We write $\R^{n+1}\pt W^{n+1} = \pt W^{n+1} + (\R^{n+1} - \II)  \pt W^{n+1}$ and the estimates follow directly from \eqref{diffully} and the elliptic properties \eqref{l2ell1}, \eqref{linfell}.
\end{proof}
%
%
\begin{proposition}[estimation of $\epsilon$]
The next local estimates are valid, for $0\le n\le N-1,$
\begin{equation}
\label{ellipticestloc}
\max_{t_n\le t\le t_{n+1}}\|\epsilon(t)\|\le C_2\ep_{n+1}^{\Ss,0}\quad \text{ with }\quad \ep_{n+1}^{\Ss,0}:=\max\left\{\hh(U^{n+1}),\h(U^n)\right\}\quad \text{ and }
\end{equation}
\begin{equation}
\label{ellipticestinftloc}
\max_{t_n\le t\le t_{n+1}}\|\epsilon(t)\|_{L^\infty}\le C_\infty\ep_{n+1}^{\Ss,\infty}\ \text{ with }\ \ep_{n+1}^{\Ss,\infty}:=\max\left\{\left(\ln\underline h_{n+1}\right)^2\hhin(U^{n+1}),\left(\ln\underline h_n\right)^2\hin(U^n)\right\}.
\end{equation}
In particular, for $1\le m\le N$, the following global estimates hold true:
\begin{equation}
\label{ellipticest}
\max_{0\le t\le t_m}\|\epsilon(t)\|\le C_2\E_m^{\Ss,0}\quad \text{ with }\quad \E_m^{\Ss,0}:=\max_{0\le n\le m}\h(U^n)
\end{equation}
and
\begin{equation}
\label{ellipticestinft}
\max_{0\le t\le t_m}\|\epsilon(t)\|_{L^\infty}\le C_\infty\E_m^{\Ss,\infty}\quad \text{ with }\quad \E_m^{\Ss,\infty}:=\max_{0\le n\le m}\left(\ln\underline h_n\right)^2\hin(U^n). \quad\qed
\end{equation}
\end{proposition}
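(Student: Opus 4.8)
The plan is to prove the estimation of $\epsilon=\omega-U$ by reducing everything to the elliptic residual estimate \eqref{resest1} and its $L^\infty$ analogue \eqref{resest3}, exploiting the piecewise-linear structure of both $\omega$ and $U$ in time. First I would observe that on each subinterval $I_n$ the elliptic error is itself an affine interpolation in time of the nodal elliptic errors. Indeed, from \eqref{spacerec} and \eqref{linearint} we have
\begin{equation*}
\epsilon(t)=(\omega-U)(t)=\ell_0^n(t)(\R^n-\II)U^n+\ell_1^n(t)(\R^{n+1}-\II)U^{n+1},\quad t\in I_n,
\end{equation*}
since $U(t)=\ell_0^n(t)U^n+\ell_1^n(t)U^{n+1}$ and the basis functions $\ell_0^n,\ell_1^n$ are the same in both expressions. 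This identity is the whole point: it says $\epsilon$ is a convex combination of the two nodal quantities $(\R^n-\II)U^n$ and $(\R^{n+1}-\II)U^{n+1}$ whenever $t\in I_n$, because $\ell_0^n(t),\ell_1^n(t)\ge 0$ and $\ell_0^n(t)+\ell_1^n(t)=1$ on $I_n$.

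Next I would take norms. Using the triangle inequality together with $0\le\ell_i^n(t)\le 1$ and $\ell_0^n(t)+\ell_1^n(t)=1$, the convexity gives, for $t\in I_n$,
\begin{equation*}
\|\epsilon(t)\|\le\ell_0^n(t)\|(\R^n-\II)U^n\|+\ell_1^n(t)\|(\R^{n+1}-\II)U^{n+1}\|\le\max\big\{\|(\R^n-\II)U^n\|,\|(\R^{n+1}-\II)U^{n+1}\|\big\}.
\end{equation*}
Applying \eqref{resest1} to each of the two nodal terms (with $V_n=U^n\in\V^n$ and $V_{n+1}=U^{n+1}\in\V^{n+1}$ respectively) bounds them by $C_2\h(U^n)$ and $C_2\hh(U^{n+1})$, which yields exactly the local estimate \eqref{ellipticestloc} with $\ep_{n+1}^{\Ss,0}=\max\{\hh(U^{n+1}),\h(U^n)\}$. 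The $L^\infty$ local estimate \eqref{ellipticestinftloc} follows by the identical argument, replacing $\|\cdot\|$ with $\|\cdot\|_{L^\infty}$ and invoking \eqref{resest3} instead of \eqref{resest1}, which contributes the $(\ln\underline h)^2$ logarithmic factors and the constant $C_\infty$.

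Finally, for the global estimates \eqref{ellipticest} and \eqref{ellipticestinft} I would simply take the maximum of the local bounds over $0\le n\le m-1$. Since $\max_{0\le t\le t_m}\|\epsilon(t)\|=\max_{0\le n\le m-1}\max_{t\in I_n}\|\epsilon(t)\|$ and each local maximum is controlled by $C_2\max\{\h(U^n),\hh(U^{n+1})\}$, the union over consecutive intervals telescopes so that every nodal estimator $\h(U^j)$ for $0\le j\le m$ appears, giving $\E_m^{\Ss,0}=\max_{0\le n\le m}\h(U^n)$; the $L^\infty$ case is analogous. I do not expect any genuine obstacle here, as the result is an immediate consequence of the affine-in-time structure of $\epsilon$ combined with the already-established residual estimates. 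The only point requiring a little care is the indexing in passing from local to global: one must check that the interval endpoints are counted correctly so that the global maximum ranges over all nodes $0\le n\le m$ rather than $0\le n\le m-1$, which is why the global estimator indexes up to $m$ while the local ones are stated on $I_n$ for $0\le n\le N-1$.
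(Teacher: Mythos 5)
Your proof is correct and follows essentially the same route as the paper: the paper's own (one-line) proof writes the identical decomposition $\epsilon(t)=\ell_0^n(t)(\R^n-\II)U^n+\ell_1^n(t)(\R^{n+1}-\II)U^{n+1}$ on $I_n$ and then invokes the residual-type elliptic estimates. Your version merely spells out the convex-combination step and the local-to-global indexing, which the paper leaves implicit.
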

\begin{proof}
For $t\in I_n$, we write $\epsilon(t) = \ell_0^n(t)(\R^n-\II)U^n + \ell_1^{n+1}(\R^{n+1}-\II)U^{n+1}$ and the estimates follow from \eqref{l2ell1} and \eqref{linfell}.
\end{proof}
%
\subsection{Estimation of $\hat\rho$}
In this subsection we estimate a posteriori the main error $\hat\rho$ using energy techniques. The estimation of $\hat\rho$ is based on  the analysis of Section~\ref{apostimdis}.  We first have from \eqref{recfullPDE} that $\hat U$ satisfies, in $I_n$, $0\le n\le N-1$, the equation
\begin{equation}
\label{perteqful1}
\langle\p_t\hat U,v\rangle+\ii\alpha\langle\nabla\hat U,\nabla v\rangle=\ii\lambda\langle f(\hat U),v\rangle+\langle R,v\rangle,\quad\forall v\in H_0^1(\Om)
\end{equation}
with
$$R(t):=\R^{n+1}W(t)+\frac{\R^{n+1}\Pro^{n+1}U^n-\R^nU^n}{k_n}-\ii\lambda f\left(\hat U(t)\right)-\ii\alpha\D(\omega+\sigma)(t),\quad t\in I_n.$$
Furthermore, the definition of the elliptic reconstruction \eqref{ellrec} and \eqref{spacerec} lead to
$$\langle\D\omega,v\rangle=\ell_0^n(t)\langle\D^n U^n,v\rangle+\ell_1^n(t)\langle\D^{n+1}U^{n+1},v\rangle, \quad\forall v\in H_0^1(\Om)$$
and by \eqref{diffully}
$$\langle\D\sigma,v\rangle=-\frac12(t_{n+1}-t)(t-t_n)\langle\D^{n+1}\pt W^{n+1},v\rangle,\quad\forall v\in H_0^1(\Om).$$
Therefore, subtracting \eqref{perteqful1} from the first equation of \eqref{NLSvar} and using the above, leads to the next proposition:
\begin{proposition}[error equation for $\hat\rho$]
The main error $\hat\rho$ satisfies the following equation in $I_n$, $0\le n\le N-1$,
\begin{equation}
\label{errormain1}
\langle\p_t\hat\rho,v\rangle+\ii\alpha\langle\nabla\hat\rho,\nabla v\rangle=\ii\lambda\langle f(u)-f(\hat U),v\rangle+\sum_{j=1}^3\langle R_j,v\rangle,\quad\forall v\in H_0^1(\Om),
\end{equation}
where the residuals $R_j$, $1\le j\le 3$, are given by
\begin{equation}
\label{resful1}
R_1(t):=(\II-\R^{n+1})W(t)-\frac{\R^{n+1}\Pro^{n+1}U^n-\R^nU^n}{k_n}+\ii\alpha\ell_0^n(t)(\II-\Pro^{n+1})\D^nU^n,
\end{equation}
\begin{equation}
\label{resful2}
R_2(t):=-\frac{\ii\alpha}2(t_{n+1}-t)(t-t_n)\D^{n+1}\pt W^{n+1},
\end{equation}
and 
\begin{equation}
\label{resful3}
R_3(t):=\ii\lambda\left(f(\hat U)-\Pro^{n+1}\left(\Phi^{n+\frac12}U\right)\right)(t).\qquad \qed
\end{equation}
\end{proposition}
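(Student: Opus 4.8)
The plan is to derive the error equation by subtracting the perturbed weak identity \eqref{perteqful1} satisfied by $\hat U$ from the variational formulation \eqref{NLSvar} of the exact problem, and then to recognise the single residual $R$ thereby produced as the sum $R_1+R_2+R_3$. Since $\hat\rho=u-\hat U$, the subtraction is immediate and gives, for every $v\in H_0^1(\Om)$ and $t\in I_n$,
\[
\langle\p_t\hat\rho,v\rangle+\ii\alpha\langle\nabla\hat\rho,\nabla v\rangle=\ii\lambda\langle f(u)-f(\hat U),v\rangle-\langle R,v\rangle,
\]
with $R$ as defined just before the statement (which itself follows from \eqref{recfullPDE}). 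Thus everything reduces to the algebraic identity $-R=R_1+R_2+R_3$, understood after pairing with an arbitrary $v\in H_0^1(\Om)$.

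First I would dispose of the terms that match verbatim. In $-R$, splitting $-\R^{n+1}W=-W+(\II-\R^{n+1})W$ isolates the leading contribution $(\II-\R^{n+1})W$ of $R_1$ in \eqref{resful1}. The Laplacian term $+\ii\alpha\langle\D(\omega+\sigma),v\rangle$ in $-R$ is expanded through the two weak identities displayed just before the statement, which follow from the elliptic reconstruction \eqref{ellrec}, the definition \eqref{spacerec} of $\omega$, and the expression \eqref{diffully} for $\sigma$. The $\sigma$ contribution is then $\ii\alpha\langle\D\sigma,v\rangle=-\frac{\ii\alpha}2(t_{n+1}-t)(t-t_n)\langle\D^{n+1}\pt W^{n+1},v\rangle=\langle R_2,v\rangle$, so $R_2$ is accounted for at once. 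Expanding $-W$ through its definition \eqref{notation1} yields the two pieces $-\ii\alpha\Theta$ and $-\ii\lambda\Pro^{n+1}(\Phi^{n+\frac12}U)$; the latter combines with the surviving $\ii\lambda f(\hat U)$ to give exactly $R_3$ of \eqref{resful3}.

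The decisive and most error-prone step is the remaining combination $\ii\alpha\langle\D\omega,v\rangle-\ii\alpha\langle\Theta,v\rangle$. Here I would insert the identity $\langle\D\omega,v\rangle=\ell_0^n\langle\D^nU^n,v\rangle+\ell_1^n\langle\D^{n+1}U^{n+1},v\rangle$ together with the definition \eqref{thetafunc} of $\Theta$. The terms carrying the factor $\ell_1^n$ — namely $\ii\alpha\ell_1^n\langle\D^{n+1}U^{n+1},v\rangle$ from $\D\omega$ and its exact negative from $\Theta$ — cancel identically, and what survives is the $\ell_0^n$ part $\ii\alpha\ell_0^n\langle(\II-\Pro^{n+1})\D^nU^n,v\rangle$, precisely the last term of $R_1$. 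Together with the leftover $-(\R^{n+1}\Pro^{n+1}U^n-\R^nU^n)/k_n$ this completes $R_1$, and collecting all contributions gives $-R=R_1+R_2+R_3$, which is the asserted error equation. The real difficulty is combinatorial rather than analytic: one must carry the $L^2$-projections and elliptic reconstructions through every term without sign errors. I would emphasise that the cancellation of the fine-level ($t_{n+1}$) discrete-Laplacian terms is by design — it is exactly what the modified scheme \eqref{CNrelaxfull}, via the choice $\Pro^{n+1}\D^nU^n$ inside $\Theta$, is engineered to produce, so that only the benign projection residual $(\II-\Pro^{n+1})\D^nU^n$ remains and the oscillatory difference $(\D^{n+1}-\D^n)U^n$ under mesh change never enters.
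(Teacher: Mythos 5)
Your proposal is correct and takes essentially the same route as the paper: the paper likewise derives \eqref{errormain1} by subtracting \eqref{perteqful1} from \eqref{NLSvar} and splitting the resulting residual $-R$ into $R_1+R_2+R_3$ via the weak identities for $\langle\D\omega,v\rangle$ and $\langle\D\sigma,v\rangle$. You simply spell out in more detail the cancellation of the $\ell_1^n$-terms between $\D\omega$ and $\Theta$ and the emergence of $(\II-\Pro^{n+1})\D^nU^n$, which the paper leaves implicit.
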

The next lemma is taken from \cite{KK} (Lemma~3.1) and we refer there for its proof.
\begin{lemma}[the residual $R_1$]
For $t\in I_n$, the residual $R_1$ can be rewritten equivalently as
\begin{equation}
\label{resful11}
\begin{aligned}
R_1(t)=(t-t_{n+\frac12})(\II-\R^{n+1})\pt W^{n+1}&-\frac{(\R^{n+1}-\II)U^{n+1}-(\R^n-\II)U^n}{k_n}\\&+(\II-\Pro^{n+1})\left(\ii\alpha\ell_0^n(t)\D^nU^n+\frac{U^n}{k_n}\right).\qquad \qed
\end{aligned}
\end{equation}
\end{lemma}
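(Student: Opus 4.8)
The plan is to rewrite the residual $R_1$ from its original form \eqref{resful1} into the equivalent form \eqref{resful11} by algebraic manipulation, reorganizing terms so that the expression involves the discrete time derivative $\pt W^{n+1}$ and the combined elliptic-reconstruction difference, both of which are amenable to a posteriori estimation. First I would recall from \eqref{notation1} that $W(t)=\ii\alpha\Theta(t)+\ii\lambda\Pro^{n+1}(\Phi^{n+\frac12}U(t))$, and observe that, because $\Theta$ in \eqref{thetafunc} is affine in $t$ and $U(t)$ is affine in $t$ on $I_n$ by \eqref{linearint}, $W$ is itself affine on $I_n$. Consequently $W$ admits the exact Taylor-type expansion about the midpoint $\tha$, namely $W(t)=W(t_{n+\frac12})+(t-t_{n+\frac12})\pt W^{n+1}$, where $\pt W^{n+1}$ is the constant slope given in \eqref{wder}. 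This identity is the crux: it lets me replace the $\II-\R^{n+1}$ applied to $W(t)$ by its value at the midpoint plus the $(t-t_{n+\frac12})$ term that appears as the first summand on the right-hand side of \eqref{resful11}.

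The key steps, in order, are as follows. Step one: substitute the midpoint expansion of $W$ into the term $(\II-\R^{n+1})W(t)$ in \eqref{resful1}, splitting it into $(\II-\R^{n+1})W(t_{n+\frac12})$ and the desired $(t-t_{n+\frac12})(\II-\R^{n+1})\pt W^{n+1}$. Step two: invoke the scheme relation \eqref{CNrelaxfull2}, which states $W(t_{n+\frac12})=\frac{U^{n+1}-\Pro^{n+1}U^n}{k_n}$, to eliminate $W(t_{n+\frac12})$ entirely in favor of the nodal values $U^{n+1}$ and $\Pro^{n+1}U^n$. Step three: combine the resulting $(\II-\R^{n+1})\frac{U^{n+1}-\Pro^{n+1}U^n}{k_n}$ with the original $-\frac{\R^{n+1}\Pro^{n+1}U^n-\R^nU^n}{k_n}$ term from \eqref{resful1} and regroup; the $\R^{n+1}\Pro^{n+1}U^n$ contributions should cancel, and what survives must be reorganized into $-\frac{(\R^{n+1}-\II)U^{n+1}-(\R^n-\II)U^n}{k_n}$ together with the leftover $(\II-\Pro^{n+1})U^n/k_n$ piece. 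Step four: collect the remaining $(\II-\Pro^{n+1})$ terms — the $\ii\alpha\ell_0^n(t)(\II-\Pro^{n+1})\D^nU^n$ already present in \eqref{resful1} and the $(\II-\Pro^{n+1})U^n/k_n$ generated in step three — into the single projected term $(\II-\Pro^{n+1})(\ii\alpha\ell_0^n(t)\D^nU^n+U^n/k_n)$ appearing at the end of \eqref{resful11}.

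The main obstacle I anticipate is the bookkeeping in step three, where the $\Pro^{n+1}U^n$ terms must be carefully tracked across the $\II$, $\R^{n+1}$, and $\R^n$ operators so that the correct cancellations occur and the residue is precisely the telescoping elliptic difference $(\R^{n+1}-\II)U^{n+1}-(\R^n-\II)U^n$ plus the clean projection term. In particular one must be attentive to the identity $\R^{n+1}\Pro^{n+1}U^n - \Pro^{n+1}U^n = (\R^{n+1}-\II)\Pro^{n+1}U^n$ versus the appearance of $\R^nU^n$, and ensure that the $\II-\Pro^{n+1}$ factor is extracted without spurious terms. Since this is exactly the structure already established in \cite{KK} (Lemma~3.1) for the linear Schr\"odinger equation — and the relaxation nonlinearity enters only through $W$ via the real, piecewise-constant factor $\Phi^{n+\frac12}$, which does not affect the affine-in-$t$ character of $W$ on $I_n$ — the argument carries over verbatim, and I would simply cite \cite{KK} for the detailed verification rather than reproduce the routine regrouping.
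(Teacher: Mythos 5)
Your proposal is correct and follows exactly the route the paper intends: the paper itself gives no proof here but defers to Lemma~3.1 of \cite{KK}, whose argument is precisely the midpoint expansion of the affine function $W$ on $I_n$, the substitution $W(t_{n+\frac12})=\frac{U^{n+1}-\Pro^{n+1}U^n}{k_n}$ from \eqref{CNrelaxfull2}, and the regrouping you describe; the cancellation of the $\R^{n+1}\Pro^{n+1}U^n$ terms and the extraction of $(\II-\Pro^{n+1})U^n/k_n$ work out exactly as you anticipate. No gaps.
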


Equation \eqref{errormain1} is of the same form as \eqref{errorbasic}. Therefore, to estimate a posteriori $\hat\rho$ in the $L^\infty(L^2)-$norm we will follow similar arguments as for the time discrete schemes.  However note that in the case of fully discrete schemes the reconstruction $\hat U$ (as well as $\omega$) are no longer computable quantities. Thus before proceeding as in the time discrete case we need two auxiliary lemmata that will allow us to handle this issue.  

The first one gives a corresponding to \eqref{graderror4} estimate for $\|\hat\rho(t)\|^{pd}.$ In the proof of this lemma we use, for $d=2,$ the Cl\'{e}ment interpolant (\cite{BS,Clement,SZ})  or, for $d=1$, the piecewise linear interpolant $\widehat\I_{n+1} z\in\widehat{\V}^{n+1}:=\V^{n+1}\bigcap\V^n$ of $z\in H_0^1(\Om).$ We summarise next some of the properties of $\widehat\I_{n+1}z$ \cite{BS,Clement,SZ,PhDKyza}. The following estimates hold:
\begin{equation}
\label{Clem1}
\|\nabla(z-\widehat\I_{n+1} z)\|\le D_{2,0}\|\nabla z\|\quad \text{ and }
\end{equation}
\begin{equation}
\label{Clem2}
\sum_{K\in\widehat\T_n}\|h_K^{-1}(z-\widehat\I_{n+1} z)\|^2_{L^2(K)}\le D_{2,1}^2\|\nabla z\|^2,
\end{equation}
 where $D_{2,0}, D_{2,1}$ are absolute constants depending only on the shape regularity of the family of triangulations and on the number of bisections necessary to pass from $\T_n$ to $\T_{n+1}$.
 \begin{lemma}[estimation of $\|\nabla\hat\rho(t)\|^{pd}$]
 For $t\in I_n$, we define
 \begin{equation}
 \label{Mu}
 M(t):=\D^n U^n+\frac{t-t_n}{k_n}\left(\D^{n+1}\Pro^{n+1}U^n-\D^nU^n\right)+\frac{t-t_n}2\D^{n+1}\left(W(t)+W(t_n)\right)\quad \text{ and }
 \end{equation}
 \begin{equation}
 \label{Zet}
 Z(t):= U^n+\frac{t-t_n}{k_n}\left(\Pro^{n+1}U^n-U^n\right)+\frac{t-t_n}2\left(W(t)+W(t_n)\right).
 \end{equation}
 Then the following estimate holds for $t\in I_n,\, 0\le n\le N-1,$
 \begin{equation}
 \label{gradestpr}
 \|\nabla\hat\rho(t)\|\le\|\nabla u(t)\|+\max\{D_{2,0},D_{2,1}\}\left(\|\nabla Z(t)\|+\|\widehat{h}_{n+1}M(t)\|\right).
 \end{equation}
 In particular, due to \eqref{SI} we have that
 \begin{equation}
 \label{gradestmain}
 \|\nabla\hat\rho(t)\|^{pd}\le\gamma(pd)\left(\|\nabla u(t)\|^{pd}+\left[\max\{D_{2,0},D_{2,1}\}\left(\|\nabla Z(t)\|+\|\widehat{h}_{n+1}M(t)\|\right)\right]^{pd}\right).
 \end{equation}
 \end{lemma}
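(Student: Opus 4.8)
The plan is to bound $\|\nabla\hat\rho(t)\|$ by inserting the exact gradient $\|\nabla u(t)\|$ via the triangle inequality and then controlling $\|\nabla\hat U(t)\|$ a posteriori. The central difficulty, as flagged in the paragraph preceding the lemma, is that $\hat U$ involves the elliptic reconstructions $\R^{n+1}$, $\R^n$ and is therefore \emph{not} a computable quantity; the whole point of the estimate \eqref{gradestpr} is to replace the gradients of these reconstructed quantities by gradients of the genuinely computable fields $Z(t)$ and $M(t)$, at the cost of the interpolation constants $D_{2,0},D_{2,1}$ and the mesh-weighted term $\|\widehat h_{n+1}M(t)\|$. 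So the first step is to write $\|\nabla\hat\rho(t)\|=\|\nabla(u-\hat U)(t)\|\le\|\nabla u(t)\|+\|\nabla\hat U(t)\|$, after which everything reduces to estimating $\|\nabla\hat U(t)\|$ a posteriori.

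Next I would identify $\hat U(t)$, from its definition \eqref{fullyrec2}, as an elliptic-reconstructed object. Comparing \eqref{fullyrec2} with the definitions \eqref{Mu}--\eqref{Zet}, and using $\int_{t_n}^t\R^{n+1}W(s)\,ds=\tfrac12(t-t_n)\R^{n+1}(W(t)+W(t_n))$ (the same trapezoidal identity used in Proposition~\ref{proprec}, valid because $W$ is affine in $t$ on $I_n$ by \eqref{notation1} and \eqref{linearint}), one sees that $\hat U(t)=\R^{n+1}Z(t)+(\R^n-\R^{n+1})\,\ell_0^n(t)U^n$, and more usefully that $\hat U(t)$ is, up to the $\R^n$ contribution that must be handled separately, the elliptic reconstruction of $Z(t)$ whose discrete Laplacian is exactly $M(t)$; indeed $-\D^{n+1}Z(t)$ and the pieces assembling $M(t)$ are matched by construction. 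The key algebraic step is thus to recognise that $\langle\nabla\hat U(t),\nabla v\rangle$ can be written, via the defining relation \eqref{ellrec} of the elliptic reconstruction, in terms of $\langle M(t),v\rangle$ for $v\in H_0^1(\Om)$.

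The heart of the argument is then the standard trick for bounding the $H^1$ seminorm of an elliptic reconstruction a posteriori: pick $v=\widehat\I_{n+1}\hat U(t)\in\widehat\V^{n+1}$, the Cl\'ement (or, for $d=1$, piecewise linear) interpolant, and exploit the Galerkin orthogonality that $\R^{n+1}$ satisfies on $\widehat\V^{n+1}$. Writing $\|\nabla\hat U\|^2=\langle\nabla\hat U,\nabla(\hat U-\widehat\I_{n+1}\hat U)\rangle+\langle\nabla\hat U,\nabla\widehat\I_{n+1}\hat U\rangle$, the first term is converted to $\langle M,\hat U-\widehat\I_{n+1}\hat U\rangle$ using \eqref{ellrec} and then estimated by Cauchy--Schwarz element by element together with the interpolation bound \eqref{Clem2}, producing the factor $D_{2,1}\|\widehat h_{n+1}M\|$; the second term is bounded using \eqref{Clem1} and yields the factor $D_{2,0}\|\nabla Z\|$ after relating $\nabla\widehat\I_{n+1}\hat U$ back to $\nabla Z$. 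Collecting the two contributions gives $\|\nabla\hat U(t)\|\le\max\{D_{2,0},D_{2,1}\}(\|\nabla Z(t)\|+\|\widehat h_{n+1}M(t)\|)$, and combined with the triangle inequality of the first step this is exactly \eqref{gradestpr}. The main obstacle I anticipate is the careful bookkeeping needed to split off the $\R^n U^n$ term (which lives on the coarser common space $\widehat\V^{n+1}$) so that the Galerkin orthogonality applies cleanly, and to verify that the discrete Laplacian of $Z(t)$ really does reproduce $M(t)$ including the mesh-change contribution $\D^{n+1}\Pro^{n+1}U^n-\D^nU^n$; once the identity $\hat U=\R^{n+1}Z+(\text{lower-order mesh-change term})$ is nailed down, the interpolation estimates are routine. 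Finally, \eqref{gradestmain} follows immediately by raising \eqref{gradestpr} to the power $pd$ and applying the elementary inequality \eqref{SI} with $q=pd\ge\tfrac12$.
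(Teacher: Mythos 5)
Your proposal is correct and follows essentially the same route as the paper: the key identity $\langle\nabla\hat U(t),\nabla v\rangle=-\langle M(t),v\rangle$ coming from the elliptic reconstruction \eqref{ellrec} and \eqref{fullyrec2}, followed by the Cl\'ement/piecewise-linear interpolant splitting with \eqref{Clem1}--\eqref{Clem2} to produce $\max\{D_{2,0},D_{2,1}\}\left(\|\nabla Z(t)\|+\|\widehat h_{n+1}M(t)\|\right)$. The only (immaterial) differences are that the paper starts from $\|\nabla\hat\rho\|^2=\langle\nabla u,\nabla\hat\rho\rangle-\langle\nabla\hat U,\nabla\hat\rho\rangle$ rather than the plain triangle inequality, and estimates $\|M(t)\|_{H^{-1}}$ by duality against a generic $z\in H_0^1(\Om)$ rather than testing with $\widehat\I_{n+1}\hat U$ itself.
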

\begin{proof}
First we write
\begin{equation}
\label{gradest1}
\|\nabla\hat\rho(t)\|^2=\langle\nabla u,\nabla\hat\rho\rangle(t)-\langle\nabla \hat U,\nabla\hat\rho\rangle(t),
\end{equation}
and using the definition of the elliptic reconstruction \eqref{ellrec} and \eqref{fullyrec2}, we deduce
$$-\langle\nabla\hat U,\nabla\hat\rho\rangle(t)=\langle M,\hat\rho\rangle(t)\le\|M(t)\|_{H^{-1}}\|\nabla\hat\rho(t)\|.$$
Therefore, \eqref{gradest1} takes the form
\begin{equation}
\label{gradestfull2}
\|\nabla\hat\rho(t)\|^2\le\left(\|\nabla u(t)\|+\|M(t)\|_{H^{-1}}\right)\|\nabla\hat\rho(t)\|,\quad \text{ or }\quad \|\nabla\hat\rho(t)\|\le\|\nabla u(t)\|+\|M(t)\|_{H^{-1}}.
\end{equation}
For the estimation of $\|M(t)\|_{H^{-1}}$ we use the definition of the discrete laplacian \eqref{discrlapl} to obtain
\begin{equation}
\label{Mu2}
\begin{aligned}
\|M(t)\|_{H^{-1}}&=\sup_{\substack{z\in H_0^1(\Om) \\ z\ne 0}}\frac{\langle M(t),z\rangle}{\|\nabla z\|} =\sup_{\substack{z\in H_0^1(\Om) \\ z\ne 0}}\left\{\frac{\langle M(t),\widehat\I_{n+1}z\rangle}{\|\nabla z\|}+\frac{\langle M(t),z-\widehat\I_{n+1}z\rangle}{\|\nabla z\|}\right\},\\
&=\sup_{\substack{z\in H_0^1(\Om) \\ z\ne 0}}\left\{\frac{\langle \nabla Z(t),\nabla\widehat\I_{n+1}z\rangle}{\|\nabla z\|}+\frac{\langle \widehat h_{n+1}M(t),\widehat h_{n+1}^{-1}(z-\widehat\I_{n+1}z)\rangle}{\|\nabla z\|}\right\}
\end{aligned}
\end{equation}
where, recall that $\widehat\I_{n+1}z\in\widehat\V^{n+1}$ denotes, for $d=2,$ the Cl\'ement interpolant and, for $d=1$, the piecewise linear interpolant of $z\in H_0^1(\Om)$. The desirable result \eqref{gradestpr} is then obtained by applying \eqref{Clem1}, \eqref{Clem2} in the last equality of \eqref{Mu2} and inserting the result back to \eqref{gradestfull2}.
\end{proof}

Next we would like to deduce an estimate for $R_3$. For this we write $R_3:=\ii\lambda(R_{31}+R_{32}+R_{33})$ with
\begin{equation}
\label{parR4}
 R_{31}:=f(\hat U)-f(\omega),\ R_{32}:=f(\omega)-f(U),\ \text{ and } R_{33}:=f(U)-\Pro^{n+1}\left(\Phi^{n+\frac12}U\right).
\end{equation}
The term $R_{33}$ is already an a posteriori quantity, while we estimate $R_{31}$ and $R_{32}$ in the next lemma.
\begin{lemma}[estimation of $R_{31}$ and $R_{32}$]
For $t\in I_n$, $0\le n\le N-1$, we have that 
\begin{equation}
\begin{aligned}
\label{estR41}
\|R_{31}(t)\|\le\Les_{31}^{n+1}(t_{n+1}-t)(t-t_n)\quad \text{ with }
\end{aligned}
\end{equation}
\begin{equation}
\label{upestR41}
\Les_{31}^{n+1}:=(p+\frac12)\left(\ep_{n+1}^{\TT,\infty}+\ep_{n+1}^{\Ss,\infty}+\max\left\{\|U^n\|_{L^\infty},\|U^{n+1}\|_{L^\infty}\right\}\right)^{2p}\left(\|\pt W^{n+1}\|+C_2\hh(\pt W^{n+1})\right)
\end{equation}
and
\begin{equation}
\label{estR42}
\|R_{32}(t)\|\le C_2\Les_{32}^{n+1}\max\left\{\hh(U^{n+1}),\h(U^n)\right\}\quad \text{ with }
\end{equation}
\begin{equation}
\label{upestR42}
\Les_{32}^{n+1}:=(2p+1)\left(C_{\infty}\ep_{n+1}^{\Ss,\infty}+\max\left\{\|U^n\|_{L^\infty},\|U^{n+1}\|_{L^\infty}\right\}\right)^{2p},
\end{equation}
where $C_2,C_{\infty}$ are the constants in \eqref{resest1} and \eqref{resest3}, respectively, and $\ep_{n+1}^{\TT,\infty}$, $\ep_{n+1}^{\Ss,\infty}$ are the estimators in \eqref{trestinftloc} and \eqref{ellipticestinftloc}, respectively.
\end{lemma}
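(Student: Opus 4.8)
The plan is to estimate $R_{31}$ and $R_{32}$ by combining the mean-value-type representation of the nonlinearity difference \eqref{NLdiff} with the already-established a posteriori bounds for $\sigma=\hat U-\omega$ and $\epsilon=\omega-U$ from the two preceding propositions. Recall from \eqref{NLdiff} that for $z_1,z_2\in\Cb$ one has $f(z_1)-f(z_2)=2p|z|^{2(p-1)}z\,\Rea(\bar z v)+|z|^{2p}v$ with $z=sz_1+(1-s)z_2$, $v=z_1-z_2$, $s\in[0,1]$. Taking absolute values and using $|\Rea(\bar z v)|\le|z|\,|v|$ gives the pointwise bound $|f(z_1)-f(z_2)|\le(2p+1)|z|^{2p}|v|$, and since $|z|=|sz_1+(1-s)z_2|\le\max\{|z_1|,|z_2|\}$, we obtain $|f(z_1)-f(z_2)|\le(2p+1)\max\{|z_1|,|z_2|\}^{2p}|z_1-z_2|$. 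This is the workhorse inequality for both terms.

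For $R_{32}=f(\omega)-f(U)$, I would apply the workhorse inequality with $z_1=\omega$, $z_2=U$, so that $v=\omega-U=\epsilon$. I would then pull the $L^\infty$ factor out: $\|R_{32}(t)\|\le(2p+1)\,\|\,\max\{|\omega|,|U|\}^{2p}\|_{L^\infty}\,\|\epsilon(t)\|$. The term $\|U\|_{L^\infty}$ on $I_n$ is controlled by $\max\{\|U^n\|_{L^\infty},\|U^{n+1}\|_{L^\infty}\}$ since $U$ is the linear interpolant \eqref{linearint}, while $\|\omega\|_{L^\infty}\le\|\epsilon\|_{L^\infty}+\|U\|_{L^\infty}$ and $\|\epsilon(t)\|_{L^\infty}$ is bounded via \eqref{ellipticestinftloc} by $C_\infty\ep_{n+1}^{\Ss,\infty}$. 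This produces the factor $\bigl(C_\infty\ep_{n+1}^{\Ss,\infty}+\max\{\|U^n\|_{L^\infty},\|U^{n+1}\|_{L^\infty}\}\bigr)^{2p}=\Les_{32}^{n+1}$ after using \eqref{SI} to split the $L^\infty$ of a max. The remaining $\|\epsilon(t)\|$ is then estimated by \eqref{ellipticestloc}, namely $C_2\max\{\hh(U^{n+1}),\h(U^n)\}$, giving exactly \eqref{estR42}.

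For $R_{31}=f(\hat U)-f(\omega)$, the structure is the same with $z_1=\hat U$, $z_2=\omega$ and $v=\hat U-\omega=\sigma$. The key input here is the explicit representation \eqref{diffully}, which shows $\sigma(t)=-\tfrac12(t_{n+1}-t)(t-t_n)\R^{n+1}\pt W^{n+1}$; this is what produces the factor $(t_{n+1}-t)(t-t_n)$ in \eqref{estR41}. I would write $\|R_{31}(t)\|\le(2p+1)\,\|\max\{|\hat U|,|\omega|\}^{2p}\|_{L^\infty}\,\|\sigma(t)\|$, then bound $\|\sigma(t)\|$ by $\tfrac12(t_{n+1}-t)(t-t_n)\bigl(\|\pt W^{n+1}\|+C_2\hh(\pt W^{n+1})\bigr)$ using \eqref{diffully}, the splitting $\R^{n+1}\pt W^{n+1}=\pt W^{n+1}+(\R^{n+1}-\II)\pt W^{n+1}$, and \eqref{resest1}. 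The constant $(2p+1)\cdot\tfrac12=p+\tfrac12$ matches the prefactor in \eqref{upestR41}. The $L^\infty$ factor is handled as before: $\|\hat U\|_{L^\infty},\|\omega\|_{L^\infty}\le\|\sigma\|_{L^\infty}+\|\epsilon\|_{L^\infty}+\|U\|_{L^\infty}$, controlled by $\ep_{n+1}^{\TT,\infty}+\ep_{n+1}^{\Ss,\infty}+\max\{\|U^n\|_{L^\infty},\|U^{n+1}\|_{L^\infty}\}$ via \eqref{trestinftloc} and \eqref{ellipticestinftloc}, raised to the $2p$ power to yield $\Les_{31}^{n+1}$.

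The main obstacle I anticipate is purely bookkeeping rather than conceptual: carefully tracking which quantities are measured in $L^\infty$ versus $L^2$, and ensuring the triangle-inequality decompositions of $\|\hat U\|_{L^\infty}$ and $\|\omega\|_{L^\infty}$ into the reconstruction-error estimators are done consistently so that the constants $C_\infty$ attach to the right terms (note the asymmetry: in $\Les_{31}^{n+1}$ the estimators $\ep_{n+1}^{\TT,\infty},\ep_{n+1}^{\Ss,\infty}$ appear \emph{without} an extra $C_\infty$, since they already absorb it, whereas in $\Les_{32}^{n+1}$ the factor $C_\infty\ep_{n+1}^{\Ss,\infty}$ is written explicitly). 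The only genuinely delicate point is justifying the pointwise inequality $|z|\le\max\{|z_1|,|z_2|\}$ for the convex combination and the use of \eqref{SI} with $q=2p$ to distribute the $2p$-th power over the sum inside the $L^\infty$ norm; both are elementary but must be stated to keep the constant $(2p+1)$ and the final exponents exact.
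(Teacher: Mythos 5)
Your proof is correct and follows essentially the same route as the paper's: the mean-value representation \eqref{NLdiff} yields the pointwise bound $(2p+1)|z|^{2p}|v|$, the factor $(t_{n+1}-t)(t-t_n)$ and the $(p+\tfrac12)$ come from \eqref{diffully} together with \eqref{resest1}, and the $L^\infty$ prefactors from \eqref{trestinftloc}, \eqref{ellipticestinftloc} and the interpolant bound on $\|U\|_{L^\infty}$ (the paper simply writes $s\hat U+(1-s)\omega=s\sigma+\epsilon+U$ and applies the triangle inequality instead of passing through $\max\{|z_1|,|z_2|\}$, which is cosmetic). Two small remarks: \eqref{SI} is not needed anywhere here --- raising the triangle inequality to the power $2p$ uses only monotonicity of $x\mapsto x^{2p}$, and invoking \eqref{SI} literally would introduce a spurious factor $2^{2p-1}$ absent from $\Les_{31}^{n+1}$ and $\Les_{32}^{n+1}$ --- and your derivation in fact produces $C_\infty\ep_{n+1}^{\Ss,\infty}$ inside $\Les_{31}^{n+1}$ as well (the estimator in \eqref{ellipticestinftloc} does not absorb $C_\infty$), so the asymmetry in \eqref{upestR41} versus \eqref{upestR42} is an inconsistency of the stated constants rather than something your argument needs to reproduce.
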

\begin{proof}
Using \eqref{NLdiff} with $z_1=\hat U$ and $z_2=\omega$ and $v=\sigma$ we estimate
$$|R_{31}|\le (2p+1)|s\hat U+(1-s)\omega|^{2p}|\sigma|=(2p+1)|s\sigma+\epsilon+U|^{2p}|\sigma|,$$
where recall that $s=s(t)\in[0,1].$ Furthermore, since $2p\ge 1,$ we have that
$$\|R_{31}(t)\|\le (2p+1)\left(\|\sigma(t)\|_{L^\infty}+\|\epsilon\|_{L^\infty}+\|U(t)\|_{L^\infty}\right)^{2p}\|\sigma(t)\|,\quad t\in I_n.$$
Hence, estimate \eqref{estR41} is then obtained using \eqref{trestinftloc}, \eqref{ellipticestinftloc}, the definition of $U$ and \eqref{diffully}. Similarly we deduce estimate \eqref{estR42}, but now instead of using \eqref{trestinftloc} and \eqref{diffully}, we use \eqref{ellipticestloc}. 
\end{proof}

We are now ready to state and prove the main theorem of the subsection.
\begin{theorem}[estimation of $\hat\rho$]\label{rhoth}
For $t\in I_n$, let
\begin{equation*}
\begin{aligned}
&\mathcal{M}(U,u_0;t):=A\Bigg(\gamma(pd)\left(\|u_0\|^{p(2-d)}+\left(\ep_{n+1}^{\TT,0}+C_2\ep_{n+1}^{\Ss,0}+\|U(t)\|\right)^{p(2-d)}\right)\\
&\times\left(G(u_0)^{pd}+\left[\max\{D_{2,0},D_{2,1}\}\left(\|\nabla Z(t)\|+\|\widehat{h}_{n+1}M(t)\|\right)\right]^{pd}\right)+\left(\ep_{n+1}^{\TT,\infty}+C_{\infty}\ep_{n+1}^{\Ss,\infty}+\|U(t)\|_{L^\infty}\right)^{2p}\Bigg),
\end{aligned}
\end{equation*}
where $A$ is the constant in \eqref{nlp3},    $\gamma(pd)$ is as in \eqref{SI}, $C_2, C_{\infty}$ are the constants in \eqref{resest1}, \eqref{resest2}, $D_{2,0},D_{2,1}$ are the constants in \eqref{Clem1}, \eqref{Clem2}, $\ep_{n+1}^{\TT,0}, \ \ep_{n+1}^{\TT,\infty},\ \ep_{n+1}^{\Ss,0},\ \ep_{n+1}^{\Ss,\infty}$ are as in \eqref{trestloc}, \eqref{trestinftloc}, \eqref{ellipticestloc}, \eqref{ellipticestinftloc}, respectively. $M(t),Z(t)$ are given by \eqref{Mu}, \eqref{Zet}, $G(u_0)$ is given by \eqref{boundgrad}. 

Then, for $0\le n\le N-1$, the following local estimate holds true for the main error $\hat\rho:=u-\hat U$:
\begin{equation}
\label{rholocal}
\begin{aligned}
\sup_{t\in I_n}\|\hat\rho(t)\|\le&\exp\left(\int_{I_n}\mathcal{M}(U,u_0;t)\,dt\right)\\&\times\left(\|\hat\rho(t_n)\|+\ep_{n+1}^{\TT,1}+\ep_{n+1}^{\TT,2}+C_2(\ep_{n+1}^{\Ss,1}+\ep_{n+1}^{\Ss,2})+\widehat C_2\ep_{n+1}^{\Ss,3}+\ep_{n+1}^{\mathrm{C}}+\ep_{n+1}^{\mathrm{D}}\right)
\end{aligned}
\end{equation}
with
$$\ep_{n+1}^{\TT,1}:=\frac{\alpha k_n^3}{12}\|\D^{n+1}\pt W^{n+1}\|\quad \text{ and }\quad \ep_{n+1}^{\TT,2}:=\frac{k_n^3}6\Les_{31}^{n+1},$$
\begin{equation*}
\begin{aligned}
\ep_{n+1}^{\Ss,1}:=\frac{k_n^2}4\hh(\pt W^{n+1}), &\quad \ep_{n+1}^{\Ss,2}:=\Les_{32}^{n+1}\max\left\{\hh(U^{n+1}),\h(U^n)\right\},\\&\text{and}\quad \ep_{n+1}^{\Ss,3}:=k_n\het(\frac{U_{n+1}}{k_n},\frac{U_n}{k_n}), 
\end{aligned}
\end{equation*}
$$\ep_{n+1}^{\mathrm{C}}:=\int_{I_n}\|(\II-\Pro^{n+1})\left(\frac{U^n}{k_n}+\ii\alpha\ell_0^n(t)\D^nU^n\right)\|\,dt,$$
$$\text{and}\quad \ep_{n+1}^{\mathrm{D}}:=\int_{I_n}\|\left(f(U)-\Pro^{n+1}\left(\Phi^{n+\frac12}U\right)\right)(t)\|\,dt,
$$
with $\widehat C_2$ the constant in \eqref{resest3},  $\Les_{31}^{n+1}$ and $\Les_{32}^{n+1}$ as in \eqref{upestR41} and \eqref{upestR42}, respectively, and where $\|\hat\rho(0)\|$ is estimated as $\|\hat\rho(0)\|\le \|u_0-U^0\|+C_2\eta_{2,\V^0}(U^0).$ In particular, for $1\le m\le N,$ the following global estimate is valid for $\hat\rho$:
\begin{equation}
\label{rhoglobal}
\begin{aligned}
\max_{0\le t\le t_m}\|\hat\rho(t)\|\le\exp\left(\sum_{n=0}^{m-1}\int_{I_n}\mathcal{M}(U,u_0;t)\,dt\right)\times\Big(&\|u_0-U^0\|+C_2\eta_{2,\V^0}(U^0)\\&\hspace{-2.5cm}+\E_m^{\TT,1}+\E_m^{\TT,2}+C_2
(\E_m^{\Ss,1}+\E_m^{\Ss,2})+\widehat C_2\E_m^{\Ss,3}+\E_m^{\mathrm{C}}+\E_m^{\mathrm{D}}\Big),
\end{aligned}
\end{equation}
with $\E_m^{\TT,j}:=\sum_{n=1}^m\ep_n^{\TT,j}$, $j=1,2$, $\E_m^{\Ss,j}:=\sum_{n=1}^m\ep_n^{\Ss,j}$, $j=1,2,3,$ $\E_m^{\mathrm{C}}:=\sum_{n=1}^m\ep_n^{\mathrm{C}},$ and $\E_m^{\mathrm{D}}:=\sum_{n=1}^m\ep_n^{\mathrm{D}}.$
\end{theorem}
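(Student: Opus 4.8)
The plan is to mirror the energy argument already carried out for the time discrete scheme in Section~\ref{apostimdis}, now applied to the error equation \eqref{errormain1} for the main error $\hat\rho$. First I would take $v=\hat\rho$ in \eqref{errormain1} and pass to real parts, exactly as in the derivation of \eqref{error2}, to obtain
\begin{equation*}
\frac12\frac{d}{dt}\|\hat\rho(t)\|^2=\lambda\Rea\ii\langle f(u)-f(\hat U),\hat\rho\rangle(t)+\sum_{j=1}^3\Rea\langle R_j,\hat\rho\rangle(t),\quad t\in I_n.
\end{equation*}
The nonlinear term is handled verbatim as in \eqref{nlp1}--\eqref{nlp3}, with $\hat e$ replaced by $\hat\rho$ and $\hat U$ kept as before, producing a bound of the form $\frac{A}{|\lambda|}(\cdots)\|\hat\rho\|^2$ in which the gradient factor $\|\nabla\hat\rho\|^{pd}$ appears. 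The essential difference from the time discrete case is that $\hat U$ and $\omega$ are not computable, so every occurrence of a quantity involving $\hat U$ must be replaced by a computable surrogate: this is precisely what the two auxiliary lemmata provide. I would use \eqref{gradestmain} together with the a posteriori bound \eqref{estgrad} for $\|\nabla u(t)\|$ to control $\|\nabla\hat\rho(t)\|^{pd}$ by computable quantities, and I would replace $\|\hat U(t)\|$ and $\|\hat U(t)\|_{L^\infty}$ appearing in the coefficient by $\|U(t)\|$ and $\|U(t)\|_{L^\infty}$ up to the reconstruction errors $\sigma$ and $\epsilon$, estimated through \eqref{trestloc}, \eqref{trestinftloc}, \eqref{ellipticestloc}, \eqref{ellipticestinftloc}. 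Collecting these gives exactly the coefficient $\mathcal{M}(U,u_0;t)$, so that
\begin{equation*}
\frac{d}{dt}\|\hat\rho(t)\|\le\mathcal{M}(U,u_0;t)\|\hat\rho(t)\|+\sum_{j=1}^3\|R_j(t)\|,\quad t\in I_n.
\end{equation*}

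Next I would estimate the three residual terms in computable form. For $R_1$ I would use the equivalent expression \eqref{resful11}: the first term is bounded using \eqref{resest1}, integrated over $I_n$ to give the factor $k_n^2/4$ in $\ep_{n+1}^{\Ss,1}$ (after applying $\int_{I_n}|t-t_{n+\frac12}|\,dt=k_n^2/4$); the telescoping term $((\R^{n+1}-\II)U^{n+1}-(\R^n-\II)U^n)/k_n$ is controlled by \eqref{resest2}, giving $\widehat C_2\ep_{n+1}^{\Ss,3}$ after multiplying by $k_n$; and the $L^2$-projection term yields $\ep_{n+1}^{\mathrm{C}}$ directly upon integration. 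For $R_2$ in \eqref{resful2} I would bound $\|\D^{n+1}\pt W^{n+1}\|$ and integrate $\frac{\alpha}{2}\int_{I_n}(t_{n+1}-t)(t-t_n)\,dt=\alpha k_n^3/12$, producing $\ep_{n+1}^{\TT,1}$. For $R_3=\ii\lambda(R_{31}+R_{32}+R_{33})$ I would invoke the splitting \eqref{parR4}: $R_{31}$ and $R_{32}$ are controlled by the previous lemma, with $\int_{I_n}(t_{n+1}-t)(t-t_n)\,dt=k_n^3/6$ giving $\ep_{n+1}^{\TT,2}$ from \eqref{estR41} and $\ep_{n+1}^{\Ss,2}$ from \eqref{estR42}, while $R_{33}$ is already a computable a posteriori quantity integrating to $\ep_{n+1}^{\mathrm{D}}$.

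Once the differential inequality is established with all right-hand side terms bounded by computable estimators, the local estimate \eqref{rholocal} follows by applying Gronwall's inequality on $I_n$, using that $\hat\rho$ is time-continuous (by the continuity of $\hat U$ established in the corresponding proposition) and that $\|\hat\rho(t_n)\|$ supplies the initial datum on each subinterval. The global estimate \eqref{rhoglobal} is obtained by iterating over $n=0,\dots,m-1$, bounding each $\exp(\int_{I_n}\mathcal{M})$ by the product $\exp(\sum_n\int_{I_n}\mathcal{M})$ and summing the local estimators into the global ones $\E_m^{\bullet}$; the initial term $\|\hat\rho(0)\|$ is estimated via $\hat\rho(0)=u_0-\R^0U^0=(u_0-U^0)+(\II-\R^0)U^0$ and \eqref{resest1}, yielding $\|u_0-U^0\|+C_2\eta_{2,\V^0}(U^0)$.

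The main obstacle I anticipate is the bookkeeping for the coefficient $\mathcal{M}(U,u_0;t)$: one must carefully track how each non-computable factor ($\|\nabla\hat\rho\|$, $\|\hat U\|$, $\|\hat U\|_{L^\infty}$) is replaced by its computable surrogate while absorbing the reconstruction errors $\ep^{\TT,0}_{n+1}$, $\ep^{\Ss,0}_{n+1}$, $\ep^{\TT,\infty}_{n+1}$, $\ep^{\Ss,\infty}_{n+1}$ with the correct constants $C_2$, $C_\infty$ and powers, and ensure via \eqref{SI} that the split powers $p(2-d)$ and $pd$ match those in the definition of $\mathcal{M}$. The delicate point, exactly as in Section~\ref{apostimdis}, is that the case $pd=2$ (critical exponent) must still close: this is guaranteed because the gradient bound \eqref{estgrad} already accounts for it through $G(u_0)$, so no new smallness condition beyond $\Gamma(u_0)<1$ enters at the fully discrete level.
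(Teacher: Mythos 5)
Your proposal is correct and follows essentially the same route as the paper: the authors likewise observe that \eqref{errormain1} has the form of \eqref{errorbasic}, rerun the Gronwall argument of Theorem~\ref{apostdiscth} with \eqref{gradestmain} in place of \eqref{graderror4}, replace $\|\hat U\|$ and $\|\hat U\|_{L^\infty}$ by $\ep_{n+1}^{\TT,0}+C_2\ep_{n+1}^{\Ss,0}+\|U\|$ and $\ep_{n+1}^{\TT,\infty}+C_\infty\ep_{n+1}^{\Ss,\infty}+\|U\|_{L^\infty}$ via the $\sigma$ and $\epsilon$ estimates, and then bound $\int_{I_n}\|R_j\|$ exactly as you describe (using \eqref{resful11} with \eqref{resest1}--\eqref{resest2} for $R_1$, the elementary time integrals $k_n^2/4$ and $k_n^3/6$ for $R_2$ and $R_{31}$, and the splitting \eqref{parR4} for $R_3$), before summing over $n$ for the global bound. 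Your accounting of the constants and powers in $\mathcal{M}(U,u_0;t)$ and of the initial term $\|\hat\rho(0)\|$ matches the paper's.
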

\begin{proof}
Equation \eqref{errormain1} is of the same form as \eqref{errorbasic}. Therefore, using \eqref{gradestmain} instead of \eqref{graderror4} and proceeding similarly to the proof of  Theorem~\ref{apostdiscth} leads to
\begin{equation}
\label{rho1}
\sup_{t\in I_n}\|\hat\rho(t)\|\le\exp\left(\int_{I_n}\mathcal{H}(\hat U,u_0;t)\,dt\right)\times\left(\|\hat\rho(t_n)\|+\sum_{j=1}^3\int_{I_n}\|R_j(t)\|\,dt\right),\quad 0\le n\le N-1,
\end{equation}
where $\mathcal{H}(\hat U,u_0;t)$ is as in \eqref{recfunctional}, but with $\|\nabla\hat U(t)\|$ being replaced by $\max\{D_{2,0},D_{2,1}\}\Big(\|\nabla Z(t)\|+$ $\|\widehat{h}_{n+1}M(t)\|\Big)$ (cf., \eqref{gradestmain}). Moreover, from \eqref{trestloc} and \eqref{ellipticestloc} we get
$$\|\hat U(t)\|\le\|\sigma(t)\|+\|\epsilon(t)\|+\|U(t)\|\le\ep_{n+1}^{\TT,0}+C_2\ep_{n+1}^{\Ss,0}+\|U(t)\|,$$
while similarly, from \eqref{trestinftloc} and \eqref{ellipticestinftloc} we deduce
$$\|\hat U(t)\|\le\ep_{n+1}^{\TT,\infty}+C_\infty\ep_{n+1}^{\Ss,\infty}+\|U(t)\|.$$
Hence \eqref{rho1} becomes
\begin{equation}
\label{rho2}
\sup_{t\in I_n}\|\hat\rho(t)\|\le\exp\left(\int_{I_n}\mathcal{M}( U,u_0;t)\,dt\right)\times\left(\|\hat\rho(t_n)\|+\sum_{j=1}^3\int_{I_n}\|R_j(t)\|\,dt\right),\quad 0\le n\le N-1.
\end{equation}

On the other hand, due to \eqref{resful11}, \eqref{resest1} and \eqref{resest3}, it is easily seen that
\begin{equation}
\label{estres1fin}
\int_{I_n}\|R_1(t)\|\,dt\le C_2\ep_{n+1}^{\Ss,1}+\widehat{C}_2\ep_{n+1}^{\Ss,3}+\ep_{n+1}^{\mathrm{C}}.
\end{equation}
Additionally, from \eqref{resful2}, \eqref{resful3}, \eqref{parR4}, \eqref{estR41}, and \eqref{estR42}, we deduce
\begin{equation}
\label{estres23fin}
\int_{I_n}\|R_2(t)\|\,dt\le\ep_{n+1}^{\TT,1}\quad \text{and}\quad \int_{I_n}\|R_3(t)\|\,dt\le\ep_{n+1}^{\TT,2}+C_2\ep_{n+1}^{\Ss,2}+\ep_{n+1}^{\mathrm{D}}.
\end{equation}

Plugging \eqref{estres1fin} and \eqref{estres23fin} into \eqref{rho2} we readily obtain \eqref{rholocal}. Estimate \eqref{rhoglobal} follows from \eqref{rholocal} by summing over $n.$
\end{proof}

Similarly to Theorem~\ref{impth}, we can also derive an improved estimate for $\|\hat\rho(t)\|$ in the one-dimensional case.
\begin{theorem}[improved estimate for $\hat\rho$ in $d=1$]\label{impth1d}
With the notation of Theorem~\ref{rhoth}  we further denote by $\eta(t)$ the right-hand side in \eqref{rhoglobal} to the power $p$. Let
\begin{equation}
\label{impth1dexp}
\begin{aligned}
\mathcal{N}(U,u_0;t):=A\Bigg(&\eta(t)\left(G(u_0)^{p}+\left[\max\{D_{2,0},D_{2,1}\}\left(\|\nabla Z(t)\|+\|\widehat{h}_{n+1}M(t)\|\right)\right]^{p}\right)\\&
+\left(\ep_{n+1}^{\TT,\infty}+C_{\infty}\ep_{n+1}^{\Ss,\infty}+\|U(t)\|_{L^\infty}\right)^{2p}\Bigg).
\end{aligned}
\end{equation}
Then, if $d=1,$ the following local and global estimates are valid:
\begin{equation}
\label{rho1dlocal}
\begin{aligned}
\sup_{t\in I_n}\|\hat\rho(t)\|\le&\exp\left(\int_{I_n}\mathcal{N}(U,u_0;t)\,dt\right)\times\Big(\|\hat\rho(t_n)\|+\ep_{n+1}^{\TT,1}+\ep_{n+1}^{\TT,2}\\&+C_2(\ep_{n+1}^{\Ss,1}+\ep_{n+1}^{\Ss,2})+\widehat C_2\ep_{n+1}^{\Ss,3}+\ep_{n+1}^{\mathrm{C}}+\ep_{n+1}^{\mathrm{D}}\Big),\quad 0\le n\le N-1,
\end{aligned}
\end{equation}
and
\begin{equation}
\label{rho1dglobal}
\begin{aligned}
\max_{0\le t\le t_m}&\|\hat\rho(t)\|\le\exp\left(\sum_{n=0}^{m-1}\int_{I_n}\mathcal{N}(U,u_0;t)\,dt\right)\times\Big(\|u_0-U^0\|+C_2\eta_{2,\V^0}(U^0)\\&+\E_m^{\TT,1}+\E_m^{\TT,2}+C_2
(\E_m^{\Ss,1}+\E_m^{\Ss,2})+\widehat C_2\E_m^{\Ss,3}+\E_m^{\mathrm{C}}+\E_m^{\mathrm{D}}\Big),\quad 1\le m\le N.\quad \qed
\end{aligned}
\end{equation}
\end{theorem}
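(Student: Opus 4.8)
The plan is to retrace verbatim the derivation of Theorem~\ref{rhoth}, departing from it only at the point where the factor $\|\hat\rho(t)\|^{p(2-d)}$ is estimated, and to use the specialisation $d=1$ to replace the crude bound employed there by the sharper a posteriori quantity $\eta(t)$. As in the proof of Theorem~\ref{rhoth}, I would begin from the error equation \eqref{errormain1}, set $v=\hat\rho$, take real parts, and obtain a differential identity for $\frac{d}{dt}\|\hat\rho(t)\|^2$ in which the residuals $R_1,R_2,R_3$ and the nonlinear term $\Rea\,\ii\langle f(u)-f(\hat U),\hat\rho\rangle$ appear. The nonlinear term is controlled exactly as in \eqref{nlp1}--\eqref{nlp2}, producing, up to the constant $2^{2p}p$, the quantity $\|\hat\rho\|^{2p+2}_{L^{2p+2}}+\|\hat U\|^{2p}_{L^\infty}\|\hat\rho\|^2$, and the Gagliardo--Nirenberg inequality \eqref{GN} then yields $\|\hat\rho\|^{2p+2}_{L^{2p+2}}\le B\|\nabla\hat\rho\|^{pd}\|\hat\rho\|^{p(2-d)}\|\hat\rho\|^2$.

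The decisive step is the substitution that lowers the exponent on the exponential functional. For $d=1$ one has $pd=p$ and $p(2-d)=p$, so the factor $\|\hat\rho\|^{p(2-d)}$ collapses to $\|\hat\rho\|^{p}$. Since $\eta(t)$ is, by definition, the right-hand side of the already established global bound \eqref{rhoglobal} raised to the power $p$, it provides a fully computable a posteriori upper bound $\|\hat\rho(t)\|^{p}\le\eta(t)$. Inserting $\eta(t)$ in place of $\|\hat\rho\|^{p(2-d)}$, and bounding $\|\nabla\hat\rho\|^{p}$ through \eqref{gradestmain} together with $\|\nabla u\|\le G(u_0)$ from \eqref{estgrad}, replaces the product $\big(\|u_0\|^{p(2-d)}+(\dots)^{p(2-d)}\big)\big(G(u_0)^{pd}+[\dots]^{pd}\big)$ appearing in $\mathcal{M}$ by $\eta(t)\big(G(u_0)^{p}+[\max\{D_{2,0},D_{2,1}\}(\|\nabla Z\|+\|\widehat h_{n+1}M\|)]^{p}\big)$, which is exactly the first group of terms in $\mathcal{N}$ from \eqref{impth1dexp}. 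The $L^\infty$ factor $\|\hat U\|^{2p}_{L^\infty}$ is treated as in Theorem~\ref{rhoth}, writing $\hat U=\sigma+\epsilon+U$ and invoking \eqref{trestinftloc} and \eqref{ellipticestinftloc}, so it contributes the unchanged term $\big(\ep_{n+1}^{\TT,\infty}+C_\infty\ep_{n+1}^{\Ss,\infty}+\|U(t)\|_{L^\infty}\big)^{2p}$.

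With these replacements the differential inequality becomes $\frac{d}{dt}\|\hat\rho(t)\|\le\mathcal{N}(U,u_0;t)\|\hat\rho(t)\|+\sum_{j=1}^3\|R_j(t)\|$, and Gronwall's inequality on $I_n$ delivers the local estimate \eqref{rho1dlocal}. The residual integrals $\int_{I_n}\|R_j(t)\|\,dt$ are identical to those in Theorem~\ref{rhoth}, since nothing in their estimation depends on the choice of exponent; hence they are bounded by the same estimators via \eqref{estres1fin}--\eqref{estres23fin}, yielding the same collection $\ep_{n+1}^{\TT,1},\ep_{n+1}^{\TT,2},\ep_{n+1}^{\Ss,1},\ep_{n+1}^{\Ss,2},\ep_{n+1}^{\Ss,3},\ep_{n+1}^{\mathrm{C}},\ep_{n+1}^{\mathrm{D}}$ on the right-hand side. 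Summing the local estimates over $n$ then produces the global bound \eqref{rho1dglobal}, precisely as \eqref{rhoglobal} was obtained from \eqref{rholocal}.

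The main obstacle, and the only genuinely new point relative to Theorem~\ref{rhoth}, is the apparent circularity of using \eqref{rhoglobal} to sharpen itself. This is resolved by observing that $\eta(t)$ is not the unknown $\|\hat\rho(t)\|^{p}$ but a fixed, already-proven, computable quantity; substituting a valid upper bound for $\|\hat\rho(t)\|^{p}$ into the nonlinear estimate is legitimate and does not reopen the Gronwall step. The payoff, as noted in Remark~\ref{impest}, is that $\eta(t)=\mathcal{O}(k^{2p})$, so the first group of terms in $\int_{I_n}\mathcal{N}$ is of higher order and the exponential is effectively governed by $A\int\|U\|^{2p}_{L^\infty}$, i.e.\ by an $L^{2p}(L^\infty)$ rather than an $L^\infty(L^\infty)$ norm of the approximation.
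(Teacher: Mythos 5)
Your proposal is correct and follows essentially the same route the paper intends: the paper omits the proof of this theorem precisely because it is the fully discrete transcription of Theorem~\ref{impth}, namely replacing the factor $\|\hat\rho(t)\|^{p(2-d)}=\|\hat\rho(t)\|^{p}$ (for $d=1$) in the Gagliardo--Nirenberg step by the already-established computable bound $\eta(t)$ from \eqref{rhoglobal}, bounding $\|\nabla\hat\rho\|^{p}$ via \eqref{gradestmain} and \eqref{estgrad}, and rerunning the Gronwall argument of Theorem~\ref{rhoth} with $\mathcal{M}$ replaced by $\mathcal{N}$ and the residual estimators unchanged. Your remark dispelling the apparent circularity is exactly the right justification.
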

%
\subsection{A posteriori error bounds in the $L^\infty(L^2)-$norm for fully discrete schemes}
We conclude the section by presenting the theorems with the a posteriori error estimates in the $L^\infty(L^2)-$norm for the error $e=u-U$.
\begin{theorem}[a posteriori error estimate in the $L^\infty(L^2)$]\label{fulaposth}
With the notation of Theorem~\ref{rhoth} we further define
\begin{equation*}
\begin{aligned}
\ep_{n+1}^{\mathrm{LOC}}:=\exp\left(\int_{I_n}\mathcal{M}(U,u_0;t)\,dt\right)\times\Big(\ep_n^{\mathrm{LOC}}
+\ep_{n+1}^{\TT,1}+\ep_{n+1}^{\TT,2}&+C_2(\ep_{n+1}^{\Ss,1}+\ep_{n+1}^{\Ss,2})+\widehat C_2\ep_{n+1}^{\Ss,3}\\&+\ep_{n+1}^{\mathrm{C}}+\ep_{n+1}^{\mathrm{D}}\Big), \quad 0\le n\le N-1,
\end{aligned}
\end{equation*}
with $\ep_0^{\mathrm{LOC}}:=\|u_0-U^0\|+C_2\eta_{2,\V^0}(U^0).$ Let also $\E_m^{\mathrm{GLOB}}$, $1\le m\le N$, be the right-hand side in \eqref{rhoglobal}. Then the following estimates are valid:
\begin{equation}
\label{fullapostloc}
\sup_{t\in I_n}\|(u-U)(t)\|\le C_2\ep_{n+1}^{\Ss,0}+\ep_{n+1}^{\TT,0}+\ep_{n+1}^{\mathrm{LOC}},\quad 0\le n\le N-1,
\end{equation}
and 
\begin{equation}
\label{fullapostglob}
\max_{0\le t\le t_m}\|(u-U)(t)\|\le C_2\E_m^{\Ss,0}+\E_m^{\TT,0}+\E_m^{\mathrm{GLOB}},\quad 1\le m\le N,
\end{equation}
where $u$ is the exact solution of \eqref{NLS}, $U$ denotes the continuous, linear, piecewise interpolant $U(t)=\ell_0^n(t)U^n+\ell_1^n(t)U^{n+1},\, t\in I_n$, and $\{U^n\}_{n=0}^N$ are the modified relaxation Crank-Nicolson-Galerkin approximations, obtained by \eqref{CNrelaxfull}. 
\end{theorem}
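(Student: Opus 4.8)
The plan is to assemble the final estimate \eqref{fullapostglob} from pieces already established, using only the triangle inequality together with the error splitting $e=u-U=\hat\rho+\sigma+\epsilon$ introduced at the beginning of this section. The essential structural observation is that the three components of the error have each been estimated separately: the elliptic error $\epsilon$ in the preceding Proposition, the time-reconstruction error $\sigma$ in the Proposition before it, and the main error $\hat\rho$ in Theorem~\ref{rhoth}. Thus no new energy argument is required here; the theorem is a \emph{synthesis} rather than a fresh derivation.

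Concretely, I would first fix $t\in I_n$ and write $\|(u-U)(t)\|\le\|\hat\rho(t)\|+\|\sigma(t)\|+\|\epsilon(t)\|$ by the triangle inequality. For the local estimate \eqref{fullapostloc} I would then bound $\|\epsilon(t)\|\le C_2\ep_{n+1}^{\Ss,0}$ using \eqref{ellipticestloc} and $\|\sigma(t)\|\le\ep_{n+1}^{\TT,0}$ using \eqref{trestloc}, both valid uniformly for $t\in I_n$. The remaining term $\|\hat\rho(t)\|$ is controlled by \eqref{rholocal} of Theorem~\ref{rhoth}; the point here is that the quantity I have labelled $\ep_{n+1}^{\mathrm{LOC}}$ is \emph{defined} to be exactly the right-hand side of \eqref{rholocal}, with the recursive substitution $\|\hat\rho(t_n)\|\le\ep_n^{\mathrm{LOC}}$ built into the recursion. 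One should check that this recursion is consistent: $\|\hat\rho(t_n)\|=\|(u-\hat U)(t_n)\|$ is indeed bounded by the local accumulated estimator from the previous step, since $\sigma$ and $\epsilon$ vanish at the nodes in the relevant sense, or more carefully, since $\ep_n^{\mathrm{LOC}}$ already dominates $\|\hat\rho(t_n)\|$ by the inductive definition. Taking the supremum over $t\in I_n$ then yields \eqref{fullapostloc} directly.

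For the global estimate \eqref{fullapostglob} I would take the maximum over $0\le t\le t_m$, splitting the interval $[0,t_m]$ into the subintervals $I_0,\dots,I_{m-1}$. The terms $\max_{0\le t\le t_m}\|\epsilon(t)\|$ and $\max_{0\le t\le t_m}\|\sigma(t)\|$ are bounded by $C_2\E_m^{\Ss,0}$ and $\E_m^{\TT,0}$ respectively, directly from the global elliptic and time-reconstruction estimates \eqref{ellipticest} and \eqref{trest}, where these global estimators are simply the maxima of the corresponding local ones. The term $\max_{0\le t\le t_m}\|\hat\rho(t)\|$ is exactly $\E_m^{\mathrm{GLOB}}$, the right-hand side of \eqref{rhoglobal}, in which the exponential accumulation across all subintervals and the summation of the residual estimators have already been carried out when proving Theorem~\ref{rhoth}. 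Combining the three bounds by the triangle inequality gives \eqref{fullapostglob}.

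I do not expect a genuine obstacle in this proof, since every ingredient is in place; the work is bookkeeping. The one point that warrants care is the precise handling of the nodal values in the recursion for $\ep_{n+1}^{\mathrm{LOC}}$: one must verify that $\|\hat\rho(t_n)\|$ is correctly absorbed into $\ep_n^{\mathrm{LOC}}$ across mesh changes, so that the recursive definition does not lose a contribution of $\sigma$ or $\epsilon$ at the interface between $I_{n-1}$ and $I_n$. Because $\hat U$ is continuous in time and $\hat\rho=u-\hat U$, the quantity $\|\hat\rho(t_n)\|$ is unambiguous, and the continuity established in the Proposition on the properties of $\hat U$ guarantees the telescoping is clean. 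Once that is confirmed, both estimates follow immediately from the triangle inequality and the already-proven component bounds.
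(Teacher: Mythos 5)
Your proposal is correct and follows essentially the same route as the paper: the authors likewise split $e=\hat\rho+\sigma+\epsilon$, apply the triangle inequality, and invoke the local/global bounds for $\epsilon$, $\sigma$ and $\hat\rho$ (the latter via Theorem~\ref{rhoth}, with $\ep_{n+1}^{\mathrm{LOC}}$ defined recursively exactly as you describe). Your extra care about the consistency of the recursion at the nodes, justified by the time-continuity of $\hat U$, is sound and matches the implicit argument in the paper.
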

\begin{proof}
We just note that
$$\|(u-U)(t)\|=\|e(t)\|\le\|\epsilon(t)\|+\|\sigma(t)\|+\|\hat\rho(t)\|$$
and we apply \eqref{ellipticest}, \eqref{trestloc}, and \eqref{rholocal} to obtain \eqref{fullapostloc}, while for \eqref{fullapostglob} we use \eqref{ellipticest}, \eqref{trest} and \eqref{rhoglobal}.
\end{proof}
The last theorem of the section is the improved a posteriori error estimate for $u-U$ in the $L^\infty(L^2)-$norm for the one-dimensional case.
\begin{theorem}[improved a posteriori error estimate in the $L^\infty(L^2)$ for $d=1$]
With the notation of Theorems~\ref{impth} and~\ref{fulaposth} we further define $\tilde{\ep}_n^{\mathrm{LOC}},\, 0\le n\le N$, and $\tilde{\E}_m^{\mathrm{GLOB}},\, 1\le m\le N$, as in Theorem~\ref{fulaposth}, but with $\mathcal{M}$ replaced by $\mathcal{N}.$ Then, for $d=1$, the following local and global estimates hold:
\begin{equation*}
\sup_{t\in I_n}\|(u-U)(t)\|\le C_2\ep_{n+1}^{\Ss,0}+\ep_{n+1}^{\TT,0}+\tilde\ep_{n+1}^{\mathrm{LOC}},\quad 0\le n\le N-1,
\end{equation*}
and
\begin{equation}
\label{fullapost1dglob}
\max_{0\le t\le t_m}\|(u-U)(t)\|\le C_2\E_m^{\Ss,0}+\E_m^{\TT,0}+\tilde\E_m^{\mathrm{GLOB}},\quad 1\le m\le N.\quad \qed
\end{equation}
\end{theorem}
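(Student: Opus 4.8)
The plan is to repeat, essentially verbatim, the argument used to prove Theorem~\ref{fulaposth}, the only modification being that the bound on the main error $\hat\rho$ is now supplied by the improved one-dimensional estimate of Theorem~\ref{impth1d} rather than by Theorem~\ref{rhoth}. Since all the analytic labour has already been carried out, the present statement is obtained by pure assembly.

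First I would decompose the error as $e=u-U=\hat\rho+\sigma+\epsilon$, exactly as in the proof of Theorem~\ref{fulaposth}, and apply the triangle inequality
$$\|(u-U)(t)\|\le\|\epsilon(t)\|+\|\sigma(t)\|+\|\hat\rho(t)\|.$$
The time-reconstruction error $\sigma$ and the elliptic error $\epsilon$ are controlled, both locally and globally, by the propositions on the estimation of $\sigma$ and of $\epsilon$ (estimates \eqref{trestloc}, \eqref{trest}, \eqref{ellipticestloc}, \eqref{ellipticest}). These are purely interpolation/elliptic estimates that are insensitive to the spatial dimension, hence they carry over unchanged to $d=1$ and contribute precisely the terms $\ep_{n+1}^{\TT,0}$, $C_2\ep_{n+1}^{\Ss,0}$ (locally) and $\E_m^{\TT,0}$, $C_2\E_m^{\Ss,0}$ (globally) appearing in the statement. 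A key point deserving a one-line check is that these two estimates are genuinely independent of the choice of Gronwall exponent, so that they may be reused without any modification under the substitution $\mathcal{M}\to\mathcal{N}$.

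The only dimension-dependent ingredient is the main error $\hat\rho$. Here, instead of invoking \eqref{rholocal}--\eqref{rhoglobal}, I would use the improved local and global bounds \eqref{rho1dlocal}--\eqref{rho1dglobal} of Theorem~\ref{impth1d}. These hold precisely because, for $d=1$, the Gagliardo--Nirenberg inequality \eqref{GN} yields $\|\hat\rho\|^{2p+2}_{L^{2p+2}}\le B\|\nabla\hat\rho\|^{p}\|\hat\rho\|^{p+2}$, so that the factor $\|\hat\rho\|^{p(2-d)}=\|\hat\rho\|^{p}$ can be absorbed into $\eta(t)=\mathcal{O}(k^{2p})$ through the global bound \eqref{rhoglobal}; this is exactly the mechanism that replaces the exponent $\mathcal{M}$ by the improved exponent $\mathcal{N}$ of \eqref{impth1dexp}. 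By the very definitions given in the statement, the resulting accumulations coincide with $\tilde\ep_{n+1}^{\mathrm{LOC}}$ and $\tilde\E_m^{\mathrm{GLOB}}$, i.e.\ with $\ep_{n+1}^{\mathrm{LOC}}$ and $\E_m^{\mathrm{GLOB}}$ of Theorem~\ref{fulaposth} after the replacement $\mathcal{M}\to\mathcal{N}$.

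Finally I would combine the three contributions through the triangle inequality: the local estimates \eqref{trestloc}, \eqref{ellipticestloc}, \eqref{rho1dlocal} give the local result, and the global estimates \eqref{trest}, \eqref{ellipticest}, \eqref{rho1dglobal} give the global one. I do not anticipate any genuine obstacle, since no new stability argument is required; the entire difficulty has been discharged in Theorem~\ref{impth1d}, and what remains is the bookkeeping of constants and the identification of the accumulated estimators $\tilde\ep_{n+1}^{\mathrm{LOC}}$ and $\tilde\E_m^{\mathrm{GLOB}}$. The mild step constraint \eqref{mildstepcons} is already built into the constituent estimates and so plays no additional role here.
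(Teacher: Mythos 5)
Your proposal is correct and follows essentially the same route as the paper: the paper's (implicit) proof is precisely the splitting $e=\hat\rho+\sigma+\epsilon$, the triangle inequality, the $\sigma$- and $\epsilon$-propositions, and the substitution of the improved bounds \eqref{rho1dlocal}--\eqref{rho1dglobal} of Theorem~\ref{impth1d} for \eqref{rholocal}--\eqref{rhoglobal}, exactly as you describe. Nothing further is needed.
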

%
\section{Numerical Experiments}\label{numexp}

In this section we report a series of numerical experiments which verify the theoretical results obtained in the previous sections. The modified relaxation Crank-Nicolson-Galerkin scheme \eqref{CNrelaxfull} and the corresponding a posteriori space and time estimators  with homogeneous Dirichlet boundary conditions were implemented in a double precision C-code using B-splines of degree $r, \ r\in\Nb$, as a basis for the finite element space $\V^n, \ 0\le n \le N$. The numerical results are for the one-dimensional case $d=1$; in this case $\varOmega=[a,b].$ For the implementations we use uniform partitions in space and time and we set equal to one the absolute constants $A,\beta, C_2, \widehat{C}_2, C_\infty, D_{2,0}, D_{2,1}$ appearing in the final a posteriori error estimators.

Our goals in this section are :  (a) to asses the magnitude of the constants $\Les_{31}^n, \ \Les_{32}^n$, $\M(U,u_0;t)$ and $\N(U,u_0;t)$ involved in the estimators and (b) to verify the correct  order of convergence for the space and time estimators.

To facilitate the process we choose to work with a soliton type exact solution of \eqref{NLS}. In particular we choose, \cite{Besse}
\begin{equation}
\label{esol}
u(x,t) = \ii\,\text{sech}(x-x_0-4\omega t)\,\text{e}^{\ii \left( 2\omega x +(1-4\omega^2)t   \right)},
\end{equation}
which is an exact solution of \eqref{NLS} with  $p=1, \ \alpha =1 , \ \lambda = 2$ and $u_0=u(\cdot,0).$  The function $u$ represents a solitary wave,  initially located around $x=x_0$, which travels to the right with speed $4\omega$.  All computations are performed in $[a,b] \times [0,T]= [-30,30]\times [0,1]$ with $x_0=0$ and $\omega=0.3$.

\begin{remark}
\upshape
The development of an adaptive space time algorithm based on the a posteriori error  estimators of the paper and its numerical validation in terms of accuracy, effectiveness and robustness for the approximation of the  solution $u$  of  \eqref{NLS} will be the subject of a forthcoming paper. In this forthcoming work we will also study further cases including for example nonlinearities to the critical exponent $p^*=\frac2d$, as well as particular examples for \eqref{NLS} in the semiclassical regime, cf. \cite{TVZ}. 
\end{remark}

In the next subsections we consider a series of different runs for the NLS problem  \eqref{NLS} with exact solution \eqref{esol}.  Let  $\ell\in\Nb$ counts the different realizations (runs), $h(\ell)$ the corresponding meshsize and $M(\ell)=1+\big[\frac{b-a}{h(\ell)}\big]\in\Nb$ where $[\cdot]$ denotes the integral part of a real number. When there is no danger of confusion we drop the dependence on $\ell$ and we write just $h$ and $M$. Since we discretize in space by B-splines of degree $r$, the expected order of convergence for the modified relaxation Crank-Nicolson finite element scheme is $r+1$ in space, whilst in time is $2$. This motivates the relation between the mesh size $h$ and the time step $k$. More specifically  in all the computations of the next two subsections we choose
\begin{equation}
\label{khrel}
k\sim h^{\frac{r+1}2}.
\end{equation}
\subsection{Behavior of  $\Les_{31}^n$, $\Les_{32}^n$ and $\M, \N$}
First, we quantify  the quantities  $\Les_{31}^n$, $\Les_{32}^n$ defined in \eqref{upestR41}, \eqref{upestR42}, respectively. For $0\le n\le N-1,$ $\Les_{31}^n$ appears in the local time estimator $\ep_{n+1}^{\TT,2}$, cf. \eqref{rholocal}, and thus in the global time estimator $\E_N^{\TT,2}$, cf. \eqref{rhoglobal}. Similarly, for $0\le n\le N-1,$ $\Les_{32}^n$ is in the local space estimator  $\ep_{n+1}^{\Ss,2}$, cf. \eqref{rholocal}, and thus in the global space estimator $\E_N^{\Ss,2}$, cf. \eqref{rhoglobal}. Estimators $\E_N^{\TT,2}$ and $\E_N^{\Ss,2}$ are expected to be of optimal order of accuracy as long as the following global quantities  
$$\Les_{31}:=\max_{0\le n\le N-1}\Les_{31}^n\quad \text{ and }\quad  \Les_{32}^n:=\max_{0\le n\le N-1}\Les_{31}^n,$$
tend to a constant value for different realizations. In Table~\ref{expN} we compute the values of $\Les_{31}, \Les_{32}$ for linear ($r=1$), quadratic ($r=2$) and cubic ($r=3$) B-splines and a series of different space discretizations $h$; the time step is chosen according to \eqref{khrel}.

We are also interested in studying the magnitude of the quantity $\displaystyle \E_N^{\M} := \sum_{n=0}^{N-1} \int_{I_n}\M(U,u_0;t)\,dt$, which appears in the exponential  of \eqref{rhoglobal} and thus on the final global a posteriori error estimator \eqref{fullapostglob}. Our findings are reported again in Table~\ref{expN}. Since the considered example is in the one spatial dimension, the improved a posteriori error estimate \eqref{fullapost1dglob} holds (see also \eqref{rho1dglobal}). The difference  of this estimate compared to \eqref{fullapostglob} is the existence of $\displaystyle \E_N^{\mathcal{N}} := \sum_{n=0}^{N-1} \int_{I_n}\mathcal{N}(U,u_0;t)\,dt$ instead of    $\displaystyle \E_N^{\M} = \sum_{n=0}^{N-1} \int_{I_n}\M(U,u_0;t)\,dt$ in the exponential term. In Table~\ref{expN} we confirm that indeed $\E_N^{\mathcal{N}}$ is smaller than $\E_N^{\M}$. 
More precisely, in view of \eqref{impth1dexp}, we expect that, as the temporal and spatial mesh sizes tend to zero,  $\E_N^{\mathcal{N}}$ will tend  to $\displaystyle\int_0^T\|u(t)\|^{2p}_{L^\infty}\,dt$, which is equal to $1$ for this particular example. This is verified in Table~\ref{expN} for quadratic and cubic B-splines. For linear B-splines, we need to consider smaller mesh sizes; however, it is clear that $\E_N^{\mathcal{N}}$ reduces as $h$ reduces in this case as well.  Although at first glance the improvement on $\E_N^{\mathcal{N}}$ compared to $\E_N^{\M}$  seems minor, we should keep in mind that these quantities appear in the exponential, therefore even minor improvements have a significant impact on the size of the total a posteriori error estimator.  

\begin{table}[ht]
\begin{center}
\renewcommand{\arraystretch}{1.1}
\begin{tabular}{|c||c|c|c|c||c|c|c|c||c|c|c|c|}\hline
 & \multicolumn{4}{c||}{$r=1$} & \multicolumn{4}{c||}{$r=2$} & \multicolumn{4}{c|}{$r=3$} \\ \hline\hline
$M$ & $\Les_{31}$ & $\Les_{32}$ &  $\E_N^{\M}$ &  $\E_N^{\N}$ & $\Les_{31}$ & $\Les_{32}$ &  $ \E_N^{\M}$ &  $\E_N^{\N}$ & $\Les_{31}$ & $\Les_{32}$ &  $ \E_N^{\M}$ &  $\E_N^{\N}$\\ \hline
$2400$ & $6.639$ & $3.070$ & $7.847$ & $17.423$& $6.359$ & $3.001$ & $7.820$ &$ 2.233$ &$ 6.321 $& $3.000$ & $7.820$ & $1.164 $\\
$3600$ & $6.520$ & $3.038$ & $7.831$ & $9.757  $& $6.338$ & $3.000$ & $7.817$ &$ 1.613$ &$ 6.317 $& $3.000$ & $7.817$ & $1.071 $\\
$4800$ & $6.464$ & $3.025$ & $7.825$ & $6.737  $& $6.329$ & $3.000$ & $7.816$ &$ 1.382$ &$ 6.315 $& $3.000$ & $7.816$ & $1.040 $\\
$6000$ & $6.432$ & $3.017$ & $7.821$ & $5.187  $& $6.325$ & $3.000$ & $7.815$ &$ 1.267$ &$ 6.315 $& $3.000$ & $7.815$ & $1.025 $\\
$7200$ & $6.411$ & $3.013$ & $7.820$ & $4.263  $& $6.322$ & $3.000$ & $7.814$ &$ 1.201$ &$ 6.314 $& $3.000$ & $7.815$ & $1.018 $\\
$8400$ & $6.397$ & $3.010$ & $7.818$ & $3.658  $& $6.320$ & $3.000$ & $7.813$ &$ 1.158$ &$ 6.314 $& $3.000$ & $7.815$ & $1.013 $\\
$9600$ & $6.386$ & $3.008$ & $7.818$ & $3.234  $& $6.319$ & $3.000$ & $7.813$ &$ 1.128$ &$ 6.314 $& $3.000$ & $7.815$ & $1.010 $\\
\hline
\end{tabular}\\[2ex]
\caption{The computed quantities  $\Les_{31}$, $\Les_{32}$,  $\E_N^{\M}$ and $\E_N^{\N}$.} \label{expN}
\end{center}
\end{table}

\subsection{EOC of the estimators}
Our purpose in this subsection is to compute the \emph{experimental order of convergence (EOC)} of the space and time estimators at the final time $T=1$ for the NLS problem \eqref{NLS} with exact solution \eqref{esol}. For this, we choose quadratic B-splines ($r=2$). Hence the expected order of convergent of the modified relaxation Crank-Nicolson finite element scheme is in this case $3$ in space and $2$ in time; thus we take $k\sim  h^{3/2},$ cf. \eqref{khrel}.

The EOC for each space estimator $\E_{N}^{S,j}, \ 0\le j \le 3,$ is computed as follows 
\begin{equation}
\label{seoc}
\text{EOC}_{\Ss,j}:=\frac{\log(\E_{N}^{\Ss,j}(\ell)/\E_{N}^{\Ss,j}(\ell+1))} {\log(M({\ell+1})/M({\ell}))},
\end{equation}
where $\E_{N}^{\Ss,j}(\ell)$ and $\E_{N}^{\Ss,j}(\ell+1)$ denote the value of the estimators in two consecutive implementations with mesh sizes $h(\ell)$ and $h(\ell+1)$ respectively. Similarly for the time estimators $\E_{N}^{\TT,j}, \ 0\le j \le 2$, the EOC is computed as 
\begin{equation}
\label{teoc}
\text{EOC}_{\TT,j}:=\frac{\log(\E_{N}^{\TT,j}(\ell)/\E_{N}^{\TT,j}(\ell+1))} {\log(k({\ell+1})/k({\ell}))},
\end{equation}
where $k(\ell), \ k(\ell+1)$ are the time steps of consecutive realizations.  

In Table~\ref{tspace} the values of the space estimators $\E_N^{\Ss,j}$ along with the corresponding $\text{EOC}_{\Ss,j},\, 0\le j\le 3,$  are presented. Note that the estimator $\E_N^{\Ss,1}$ is expected to be of optimal third order in space and of first order in time (cf. Theorem~\ref{rhoth}), i.e., it is a superconvergent term. Due to the choice $k\sim  h^{3/2}$ we have $k\times h^3=h^{3/2}\times h^3=h^{9/2}$, and the EOC we expect to observe is $4.5$. 
This EOC is indeed observed in Table~\ref{tspace} for $\E_N^{\Ss,1}$, while, as it is shown in the same table, the other three space estimators exhibit the correct order.
Similarly, in Table~\ref{ttime} the values of the time estimators $\E_N^{\TT,j}$ along with  the  $\text{EOC}_{\TT,j},\, 0\le j\le 2,$  are presented. We note that all time estimators exhibit the correct second order. 
%
 %
\begin{table}[!ht]
\begin{center}
\renewcommand{\arraystretch}{1.1}
\begin{tabular}{|c||cc|cc|cc|cc|}\hline
$M$ &  $ \E_N^{\Ss,0}$ & $\text{EOC}_{\Ss,0}$&$\E_N^{\Ss,1}$ & $\text{EOC}_{\Ss,1}$& $\E_N^{\Ss,2}$& $\text{EOC}_{\Ss,2}$&$\E_N^{\Ss,3}$ &$\text{EOC}_{\Ss,3}$ \\ \hline\hline
$2400$ &$ 1.3817$E$-05$ & --          & $2.64149$E$-07$ &   --       & $4.1466$E$-05 $&   --        &$ 2.5055$E$-05 $& -- \\
$3600$ &$ 4.0936$E$-06$ &$ 3.0001$ & $4.25720$E$-08$ & $4.5018 $& $1.2283$E$-05 $& $3.0007 $&$ 7.4230$E$-06 $&$ 3.0003 $\\
$4800$ &$ 1.7270$E$-06$ &$ 3.0000$ & $1.16949$E$-08$ & $4.4912 $& $5.1813$E$-06 $& $3.0003 $&$ 3.1315$E$-06 $&$ 3.0001 $\\
$6000$ &$ 8.8421$E$-07$ &$ 3.0000$ & $4.28570$E$-09$ & $4.4988 $& $2.6527$E$-06 $& $3.0002 $&$ 1.6033$E$-06 $&$ 3.0001 $\\
$7200$ &$ 5.1169$E$-07$ &$ 3.0000 $& $1.88822$E$-09 $& $4.4956 $& $1.5351$E$-06 $& $3.0001 $&$ 9.2783$E$-07 $&$ 3.0000 $\\
$8400$ &$ 3.2223$E$-07$ &$ 3.0000$ & $9.44191$E$-10$ & $4.4960 $& $9.6671$E$-07 $& $3.0001 $&$ 5.8429$E$-07 $&$ 3.0000 $\\
$9600$ &$ 2.1587$E$-07$ &$ 3.0000$ & $5.20390$E$-10$ & $4.4615 $& $6.4762$E$-07 $& $3.0000 $&$ 3.9142$E$-07 $&$ 3.0000 $\\
\hline
\end{tabular}\\[2ex]
\caption{Space a posteriori  estimators $\E_N^{\Ss,j}$ and corresponding $\text{EOC}_{\Ss,j},\, 0\le j\le 3$.} \label{tspace}
\end{center}
\end{table}
\begin{table}[!ht]
\begin{center}
\renewcommand{\arraystretch}{1.1}
\begin{tabular}{|c||cc|cc|cc|}\hline
$k^{-1}$ &  $ \E_N^{\TT,0}$ & $\text{EOC}_{\TT,0}$&$\E_N^{\TT,1}$ & $\text{EOC}_{\TT,1}$& $\E_N^{\TT,2}$& $\text{EOC}_{\TT,2}$ \\ \hline\hline
$ 252 $& $ 8.2786$E$-06 $& --          &$ 3.1156$E$-04 $&  --        & $ 1.6442$E$-05 $& -- \\
$ 464 $& $ 2.4426$E$-06 $& $2.0055$ &$ 9.2190$E$-05$ &$ 2.0007 $& $ 4.8667$E$-06 $& $2.0002$ \\
$ 715 $& $ 1.0288$E$-06 $& $2.0032$ &$ 3.8879$E$-05$ &$ 2.0003 $& $ 2.0526$E$-06 $& $2.0001$ \\
$1000$ &$ 5.2600$E$-07$ &$ 2.0021$ &$ 1.9891$E$-05$ &$ 2.0000$ &$ 1.0501$E$-06$ &$ 2.0000$ \\
$1314$ &$ 3.0466$E$-07$ &$ 2.0016$ &$ 1.1528$E$-05$ &$ 1.9993$ &$ 6.0850$E$-07$ &$ 2.0000$ \\
$1656$ &$ 1.9182$E$-07$ &$ 2.0012$ &$ 7.2678$E$-06$ &$ 1.9957$ &$ 3.8324$E$-07$ &$ 2.0000$ \\
$2023$ &$ 1.2854$E$-07$ &$ 2.0010$ &$ 4.8767$E$-06$ &$ 1.9943$ &$ 2.5686$E$-07$ &$ 2.0000$ \\
\hline
\end{tabular}\\[2ex]
\caption{Time a posteriori  estimators $\E_N^{\TT,j}$  and corresponding $\text{EOC}_{\TT,j},\, 0\le j\le 2$.} \label{ttime}
\end{center}
\end{table}

We are also interested in computing the \emph{effectivity index} which is the ratio between an a posteriori error estimator and the exact error. The effectivity index is a tool providing information on the quality of the estimator. To that end, we denote by $\mathrm{E_\text{exact}}:=\displaystyle\max_{0\le n\le N}\|u(t_n)-U^n\|$ the exact error, by $\E_N$ the sum of all time and space estimators and we compute the effectivity index $ei$ as $ei:=\E_N/\mathrm{E_\text{exact}}.$ The exact error $\mathrm{E_\text{exact}}$, the estimator $\E_N$ and the effectivity index $ei$ are presented in Table~\ref{ttteei}. We observe  that the computed effectivity index $ei$ stabilizes to a fixed value, indicating in this case that the sum of all time and space estimators $\E_N$ is about $26$ times larger than the actual error.
\begin{table}[!ht]
\begin{center}
\renewcommand{\arraystretch}{1.1}
\begin{tabular}{|c|c|c|}\hline
$\E_N$ & $\mathrm{E_\text{exact}} $ &  $ei$ \\ \hline\hline
$4.1688$E$-04 $& $1.6587$E$-05 $& $25.13$ \\
$1.2334$E$-04 $& $4.9118$E$-06 $& $25.11 $\\
$5.2012$E$-05 $& $2.0186$E$-06 $& $25.77$ \\
$2.6612$E$-05 $& $1.0202$E$-06 $& $26.09$ \\
$1.5418$E$-05 $& $5.8838$E$-07 $& $26.20$ \\
$9.7171$E$-06 $& $3.7056$E$-07 $& $26.22$ \\
$6.5176$E$-06 $& $2.4818$E$-07 $& $26.26$ \\
\hline
\end{tabular}\\[2ex]
\caption{A posteriori  estimator $\E_N$, exact error $\mathrm{E_\text{exact}}$ and effectivity index $ei$.} \label{ttteei}
\end{center}
\end{table}

Finally we would like to note that the data estimator $\E_N^{\mathrm{D}}$ is, for this example, much smaller than the time and space estimators, whilst the coarsening estimator $\E_N^{\mathrm{C}}$ is zero in the case of uniform partitions. This is the reason we don't include these estimators in $\E_N.$
\section*{Acknowledgments}
I.K. is grateful to  Prof.\ Charalambos Makridakis for suggesting the problem and for fruitful discussions.


\end{document}